

\documentclass[reqno,10pt]{amsart}
\usepackage[utf8]{inputenc}
\usepackage[T1]{fontenc}
\usepackage[english]{babel}
\usepackage{amssymb}
\usepackage[foot]{amsaddr}
\usepackage{a4wide}

\usepackage{graphicx}
\usepackage{wrapfig}
\usepackage{enumerate}
\usepackage[usenames,dvipsnames]{color}


\usepackage{mathtools}
\mathtoolsset{showonlyrefs,showmanualtags}

\usepackage{esint}

\usepackage{pstricks,pst-plot,pst-math} 
\usepackage{pstricks-add}

\allowdisplaybreaks


\newtheorem{thm}{Theorem}[section]

\newtheorem*{thmxx}{Main Theorem}
\newtheorem*{thmxx*}{Main Theorem*}
\newtheorem*{thmxx**}{Main Theorem**}

\newtheorem{defn}[thm]{Definition}
\newtheorem{lem}[thm]{Lemma}
\newtheorem{prop}[thm]{Proposition}

\newtheorem*{conjecture*}{Conjecture}

\providecommand{\customgenericname}{}
\newcommand{\newcustomtheorem}[2]{%
	\newenvironment{#1}[1]
	{%
		\renewcommand\customgenericname{#2}%
		\renewcommand\theinnercustomgeneric{##1}%
		\innercustomgeneric
	}
	{\endinnercustomgeneric}
}
\newcustomtheorem{customthm}{Theorem}

\theoremstyle{definition}

\theoremstyle{remark}
\newtheorem{rem}[thm]{Remark}

\numberwithin{equation}{section}


\newcommand{\DeclareAutoPairedDelimiter}[3]{%
	\expandafter\DeclarePairedDelimiter\csname Auto\string#1\endcsname{#2}{#3}%
	\begingroup\edef\x{\endgroup
		\noexpand\DeclareRobustCommand{\noexpand#1}{%
			\expandafter\noexpand\csname Auto\string#1\endcsname*}}%
	\x}


\DeclareAutoPairedDelimiter{\abs}{\lvert}{\rvert}
\DeclareAutoPairedDelimiter{\norm}{\lVert}{\rVert}
\DeclareAutoPairedDelimiter{\bra}{(}{ )}
\DeclareAutoPairedDelimiter{\pra}{[}{]}
\DeclareAutoPairedDelimiter{\set}{\{}{\}}
\DeclareAutoPairedDelimiter{\skp}{\langle}{\rangle}

\DeclareMathAlphabet{\mathup}{OT1}{\familydefault}{m}{n}
\newcommand{\dx}[1]{\mathop{}\!\mathup{d} #1}
\newcommand{\dH}[1]{\mathop{}\!\mathup{d} \mathcal{H}^{#1}}



\DeclareMathOperator{\loc}{loc}

\DeclareMathOperator{\tr}{tr}

\DeclareMathOperator{\Lip}{Lip}


\newcommand{\N}{\mathbb{N}}
\newcommand{\R}{\mathbb{R}}


\newcommand{\cC}{\ensuremath{\mathcal C}}

\newcommand{\cH}{\ensuremath{\mathcal H}}

\newcommand{\cN}{\ensuremath{\mathcal N}}

\newcommand{\cP}{\ensuremath{\mathcal P}}

\newcommand{\tP}{\ensuremath{\tilde P}}
\newcommand{\coeff}{\ensuremath{c}}



\usepackage[colorinlistoftodos,prependcaption,textsize=tiny,linecolor=red,backgroundcolor=red!25,bordercolor=red]{todonotes}




\newcommand{\news}[1]{{\textcolor{magenta}{#1}}}


\usepackage[pdfusetitle]{hyperref}
\hypersetup{
    colorlinks=true,       
    linkcolor=black,          
    citecolor=black,        
    filecolor=black,      
    urlcolor=black           
}

\pagestyle{headings}

\setcounter{tocdepth}{3}

\title[On global solutions of the obstacle problem]{On global solutions of the obstacle problem}

\author{Simon Eberle$^1$}
\address{$^1$Basque Center for Applied Mathematics}
\email{seberle@bcamath.org}
\author{Henrik Shahgholian$^2$}
\address{$^2$Department of Mathematics, KTH Royal Institute of Technology, Sweden}
\email{henriksh@kth.se}
\author{Georg S. Weiss$^3$}
\address{$^3$Faculty of Mathematics, University of Duisburg-Essen, Germany}
\email{georg.weiss@uni-due.de}

\let\rho\varrho
\let\phi\varphi
\let\epsilon\varepsilon
\let\emptyset\varnothing

\thanks{H. Shahgholian was supported in part by the Swedish Research Council.}
\thanks{S. Eberle and G. Weiss were both during the first months of the project guests of the Hausdorff Research Institute for Mathematics of the university of Bonn.}

\begin{document}
	
	\begin{abstract}
Assuming a lower bound on the dimension, we prove 
a long standing conjecture concerning the 
classification of global solutions of the obstacle problem with \emph{unbounded} coincidence sets.
	\end{abstract}

		\maketitle

\tableofcontents

\section{Introduction}

Many important problems in science, finance and engineering can be modeled by PDEs that have a-priori unknown interfaces. Such problems are called \emph{free boundary problems} and they have been a major area of research in the field of PDEs for at least half a century.
The \emph{obstacle problem} arguably is the most extensively   studied free boundary problem. It may be derived from a simple model for spanning an elastic membrane over some given (concave) obstacle. Alternatively it can be derived from a certain setting in the Stefan problem, the simplest model for the melting of ice, or from the Hele-Shaw problem. 
The obstacle problem may be expressed in a single nonlinear partial differential equation 
\begin{align}\label{eq:obstacle-local}
\Delta u=\coeff(x)\chi_{\{u>0\}}, ~ u \geq 0 , ~ \text{ where } \coeff \in \Lip(\R^N, [\coeff_0, \infty)), \coeff_0 >0,
\end{align}
and  the set $\{ u>0\}$ models 
the phase where the membrane does not touch the obstacle
respectively  the water phase in the stationary Stefan problem. 
The set $\{ u=0\}$ is called {\em coincidence set} and the interface $\partial\{ u>0\}$ is called the {\em free boundary}. \\
From the point of view of mathematics, the most challenging question in free boundary problems is the \emph{structure and the regularity of the free boundary}. The development of contemporary regularity theory for free boundaries started with the seminal work of L. Caffarelli \cite{Caffarelli_regularity_free_boundary_higher_dimensions_Acta77} in the late 70s, and since then has been a very active field of research. \\ As the obstacle problem has been extensively studied over the last $4$ decades  of the questions originally posed only the hard problems have remained unsolved. Among them are the fine structure of the singular part of the free boundary and the behavior of the regular part of the free boundary close to singularities. Towards answering the first question there have been many impressive results in recent years (cf. A. Figalli, J. Serra \cite{Figalli-Serra-2020-Inventiones}, A. Figalli, J. Serra and X. Ros-Oton \cite{Figalli-Ros-Oton-Serra-Generic}, M. Colombo, L. Spolaor and B. Velichkov \cite{ColomboSpolaorVelnichkov18}, O. Savin and H. Yu \cite{SavinYu2020regularity}). While the singular set has thus been extensively studied there has -- to the authors' best knowledge -- been  no result on the behavior of the \emph{regular part of the free boundary close to singularities}.

\subsection{Global solutions of the obstacle problem}
In the study of the behavior of the 
regular part of the free boundary close to singularities,
the structure of global/entire solutions of the obstacle problem
\begin{equation}\label{eq:obstacle-global}
\Delta u(x)= \chi_{\{u>0\}}, \qquad u \geq 0, \qquad \hbox{in }\R^{N},
\end{equation}
plays a vital role.
Note that the study of (compact) coincidence sets of such global solutions
goes  back to Isaac Newton (see subsection \ref{pot-theory}), who stated his famous \emph{no gravity in the cavity theorem} in 1678. \\
The first partial classification of global solutions with compact coincidence sets has been achieved almost 90 years ago in 1931 by  P. Dives \cite{Dive} who showed in \emph{three dimensions}  ---in the language of potential theory--- that if $\set {u=0}$ has non-empty interior and is \emph{bounded} then it is an ellipsoid. Many years later H. Lewy in 1979 gave a new proof in \cite{Lewy79}.
In 1981, M. Sakai gave a full classification of global solutions in \emph{two dimensions} using complex analysis (cf. \cite{Sakai_Null_quadrature_domains}). The higher dimensional analogue to Dive's result, i.e. if $\set {u=0}$ is bounded and has non-empty interior then it is an ellipsoid, was proved shortly after in two steps. First E. DiBenedetto and A. Friedman proved the result in 1986 under the additional assumption that $\set {u=0}$ is symmetric with respect to $\set {x_j =0}$ for all $j \in \set {1, \dots, N}$ (cf. \cite{DiBenedettoFriedman}). In the same year A. Friedman and M. Sakai \cite{FriedmanSakai} removed this extra symmetry assumption.
In \cite{ellipsoid} two of the authors gave a very concise proof of the characterization of compact coincidence sets. \\
It is noteworthy that \cite{FriedmanSakai} has a beautiful application to Eshelby's inclusion problem (\cite{Kang-Milton-2008,Liu-2008,KANG_Eshelby_review}). For more details we refer the interested reader to section \ref{sec:Eshelby}.

While global solutions with compact coincidence sets have thus been completely classified, the structure of solutions with \emph{unbounded} coincidence sets has been largely open and is related to the following 30 year-old conjecture:
\begin{center}
	{\it The coincidence set of each global solution of the obstacle problem is a half-space, an ellipsoid, a paraboloid or a cylinder with 
		ellipsoid or paraboloid as base.}
\end{center}
This conjecture has first been raised by one of the authors in \cite[conjecture on p. 10]{Shahgholian92_conjecture} and was later reiterated  in \cite[Conjecture 4.5]{KarpMargulis_bounded_sources}. 

We prove the conjecture under a technical assumption:

\begin{thmxx}\label{thm:MainTheorem_Intro_I}
	Let  $u$ be a   solution of   \eqref{eq:obstacle-global} 
	such that the coincidence set $\{ u=0\}$ has non-empty interior
	and such that,
	setting 
	\begin{align}
	p(x) := \lim \limits_{\rho \to \infty} \frac{u(\rho x)}{\rho^2}\quad \text{ in } L^\infty(\partial B_1) \quad \text{and} \quad \cN(p) := \{p=0\},
	\end{align}
	$\dim \cN(p) \leq N-5$.
	\\
	Then the coincidence set is an ellipsoid,
	a paraboloid or a cylinder with 
	ellipsoid or paraboloid as base.
\end{thmxx}
Note that the bound on the dimension of $\cN(p)$ does not only come from the requirement of the Newton potential to be well defined on the coincidence set $\cC$ but also from the fact that for $\dim \cN(p) > N-5$ the asymptotic behavior of $u-p$ is more involved (for more details see the end of section \ref{sec:structure_of_proofs}).

In a forthcoming paper we will apply the Main Theorem
to the analysis of the behavior of the 
regular part of the free boundary close to singularities.
The existence of paraboloid solutions with precise asymptotic behavior 
Section \ref{section:existence_of_paraboloid_solutions}
also implies the existence of paraboloid traveling wave solutions in
the Hele-Shaw problem with precise asymptotic behavior at infinity.
Although this is in no way the focus of our paper,
we briefly comment on those traveling waves in the Appendix \ref{sec:Traveling_waves_Hele-Shaw}.
Moreover we will describe the potential theoretic aspect 
of global solutions of the obstacle problem in some detail
in the Appendix \ref{sec:Potential_Theory_and_obstacle_problem}.  
\subsection{Structure of the proofs} \label{sec:structure_of_proofs}

The proof of the Main Theorem 
uses precise estimates on the asymptotic behavior  of the solution $u$ at infinity to allow  a comparison with  a paraboloid solution with matched asymptotics.
In section \ref{early-results} we first reduce the problem to its non-cylindrical (cf. Definition \ref{def:cylindrical}) dimensions and to the case of \emph{unbounded} coincidence set. We show that in this reduced setting it holds that $\operatorname{dim}(\cN(p))=1$ so that the assumption in our Main Theorem
becomes $N \geq 6$.
 In section \ref{section:first_frequency_estimate} we give a first frequency estimate and show that a rescaled version of $u-p$ converges to an affine linear function $\ell$. In section \ref{section:first_estimate_on_the_coincidence_set} we infer from this preliminary analysis that the coincidence set $\cC$ is asymptotically contained in a set slightly larger than a paraboloid. Note that sections 4-5 are valid in any dimension
$\ge 3$.
As a consequence the Newtonian potential of $\cC$ is well defined in higher dimenions (see section \ref{section:Newton_potential_expansion_of_u})
so that we may expand $u$ into a quadratic polynomial and the Newtonian potential. In section \ref{section:existence_of_paraboloid_solutions}
we construct a paraboloid solution matching the quadratic and linear asymptotic behavior of $u$ at infinity.
We conclude the proof of the Main Theorem  
in section \ref{section:comparison}. In that section we first prove that the Newtonian potential converges uniformly to zero outside a set slightly larger than the paraboloid, as $\abs{x} \to \infty$ so that the Newtonian potential expansion of the solution into a quadratic polynomial is rigorous outside that set.
Finally we use a comparison principle with mismatched data on some boundary part to show that $u$ lies below some translated version of the paraboloid solution. A sliding argument concludes the proof of the Main Theorem.  

In dimension $N=4$ and $N=5$ the difference
$u-p-\ell$ is no longer bounded which makes more intricate methods necessary, subject of future research.

In dimension $N=3$ the difference $u-p$ is on the ball $B_R$ of order
$R \log R$ and thus has superlinear growth, so $N=3$ is a truely critical case 
---where even matching the next order (below quadratic) in the asymptotic expansion at infinity poses a formidable difficulty---  
which requires 
substantially different methods.

\section*{Acknowledgments}
The authors thank the Hausdorff Research Institute for Mathematics of the university of Bonn for its hospitality and Herbert Koch for discussions.

\section{Notation}\label{sec:notation}

We shall now  clarify the notation used in the introduction and make some assumptions that will make notational complexity as low as possible in the rest of the paper.
\\
Throughout this work $\R^N$ will be equipped with the Euclidean inner product $x \cdot y$ and the induced norm $\abs{x}$. Due to the nature of the problem we will often write $x \in \R^N$ as $x=(x', x'') \in \R^{N-n} \times \R^n$ for an $n \in \N$, $N \geq n+1$. $B_r(x)$ will be the open $N$-dimensional ball of center $x$ and radius $r$. $B_r'(x')$ will be the open $(N-n)$- dimensional ball of center $x' \in \R^{N-n}$ and radius $r$. Whenever the center is omitted it is assumed to be $0$.
\\
When considering a set $A$, $\chi_A$ shall denote the characteristic function of $A$. $\cH^{N-1}$ is the $(N-1)$-dimensional Hausdorff measure.
\\
Finally we call special solutions of the form
$\max \{x\cdot e,0\}^2/2$ half-space solutions; here $e\in \partial B_1$ is a fixed vector.\\
When $M \in \R^{N \times N}$ is a matrix, we mean by $\tr(M):= \sum_{j=1}^N M_{jj}$ its trace.

\begin{defn}[Coincidence set]\label{def:coincidence_set}
	\mbox{} \\
	For solutions $u$ of the obstacle problem \eqref{eq:obstacle-global}, we define the \emph{coincidence set} $\cC$ to be
\begin{align}
\cC := \set {u=0}.
\end{align}
\end{defn}

\begin{rem} 	It is known that the coincidence  $\cC$ of a \emph{global} solution of the obstacle problem is \emph{convex} (see e.g. \cite[Theorem 5.1]{PetrosyanShahgholianUraltseva_book}).
\end{rem}

\begin{defn}[Cylindrical and non-cylindrical solutions] \label{def:cylindrical}
	\mbox{}\\
We call a global solution $u$ of the obstacle problem  \eqref{eq:obstacle-global} \emph{cylindrical} (in the direction $e$) if for an $e \in \partial B_1$
\begin{align}
	\nabla u \cdot e \equiv 0 \quad \text{ in } \R^N
\end{align}
and we call $u$ \emph{non-cylindrical} if it is not cylindrical.
\end{defn}

\begin{defn}[Ellipsoids and Paraboloids] \label{def:ellipsoid_and_paraboloid}
	\mbox{} \\
We call a set $E \subset \R^N$ \emph{ellipsoid} if after translation and rotation
\begin{align}
	E = \set {x \in \R^N : \sum \limits_{j = 1}^N \frac{x_j^2}{a_j^2} \leq 1   }
\end{align}
for some $a \in (0,\infty)^N$.  We call a set $P \subset \R^N$ a \emph{paraboloid}, 
if after translation and rotation  $P $ can be represented as 
$$
	P = \set { (x',x_N) \in \R^N : x' \in \sqrt{x_N} E'   } ,
$$	  
where  $E' $  is an $(N-1)$-dimensional ellipsoid.
\end{defn}
\begin{defn}[Newtonian potential]\label{NP}
	\mbox{} \\
	Let $N \geq 3$, let $M \subset \R^N$ be a measurable set, and let $\alpha_N := \frac{1}{N(N-2) \abs {B_1}}$. 
	We call 
	\begin{align}
		V_M(x) := \alpha_N \int \limits_M \frac{1}{  \abs {x-y}^{N-2}   } \dx{y} \quad \in [0,+\infty] 
	\end{align} 	the \emph{Newtonian potential of $M$}.
        We say that the Newtonian potential is well-defined if $V_M(x)<+\infty$ for every $x\in \R^N$, and satisfies 
        $$
        \Delta V_M = -\chi_{M}, \quad \hbox{in } \R^N.
        $$
\end{defn}


\section{Known results and  reduction to the paraboloid case}\label{early-results}
\subsection{Known results}
In this section we shall recall some known results concerning classification of global solutions of the obstacle problem. 
We gather them in the following proposition.
\begin{prop}[Known Properties] \label{prop:known_results}
	Let $u$ be a global  solution of the obstacle problem \eqref{eq:obstacle-global}. Then:
	
	\begin{enumerate}[(i)]	
		\item The second derivatives are globally bounded, i.e. there is $C<+\infty$ such that 
		\begin{equation}\label{bounded_second_derivative}
		\norm {D^2u}_{L^\infty(\R^N)} \leq C.
		\end{equation}
		
		\item \label{prop:known_results_item_2}If the coincidence set $\cC$ contains two sequences $(x^j)_{j \in \N}$, $(y^k)_{k \in \N} \subset \cC$, such that  $\abs{x^j}\to \infty$, $\abs{y^k} \to \infty$ as $j \to \infty$ and $k \to \infty$ and 
		$ \tilde x^j:=x^{j}/|x^j|, \tilde y^k:=y^{k}/|y^k|$  converge to two independent vectors $x^0, y^0 \in \partial B_1$ on the unit sphere, then the global solution $u$ is cylindrical. 
		
		\item \label{prop:known_results_item_3}  If the sequences above have the property that the limit vectors $x^0, y^0$ satisfy $x^0 = -y^0$, then the global solution $u$ is cylindrical in the $x^0$--direction. 
		
		\item \label{prop:known_results_item_4} If the coincidence set is unbounded in the $e^N$-direction and not cylindrical in any direction $e \perp e^N$,  then the  
		\emph{blow-down  is independent of the $e^N$-direction only}. More precisely
		\begin{align}
		u_r(x)	:= \frac{u(rx)}{r^2} \to x'^T Q x' =:p(x') ~\text{ in } C^{1,\alpha}_{\operatorname{loc}}\cap
		W^{2,p}_{\operatorname{loc}}\text{ as } r \to \infty,
		\end{align}
		where $x=(x',x_N)$, $Q  \in \R^{N-1 \times N-1}$ is positive definite, symmetric and $\tr(Q) =  \frac{1}{2}$. 
	\end{enumerate}
	
\end{prop}

\begin{proof}
	\mbox{}\\
\begin{enumerate}[(i)]
\item This is a direct consequence of \cite[Theorem 2.1]{PetrosyanShahgholianUraltseva_book}.
\item This follows from \cite[proof of Theorem II, case 3]{CaffarelliKarpShahgolian_Annals_2000}.
\item This follows by directional monotonicity  and convexity of $u$
(cf. \cite[proof of Theorem II, Case 3]{CaffarelliKarpShahgolian_Annals_2000}). 
\item For the uniqueness of the blow-down (i.e. independence of sequences) see Lemma \ref{lem:uniqueness_of_blow-downs}. For the strong $W^{2,p}_{\loc}(\R^N)$-convergence we refer to \cite[Proposiion 3.17 (v)]{PetrosyanShahgholianUraltseva_book}. For the fact that the blow-down limit must be a polynomial solution we refer to \cite[Proposition 5.3]{PetrosyanShahgholianUraltseva_book} and the fact that a solution that has the half-space solution as blow-down must itself be the half-space solution (cf. \cite[Proof of Theorem II Case 2]{CaffarelliKarpShahgolian_Annals_2000}). \\
Finally, from the fact that the blow-down is a polynomial solution and the assumption that the coincidence set $\cC$ vanishes in $e^N$-direction, we infer that the polynomial must be independent of $x_N$. It remains to show that the polynomial does not vanish in any other direction. Assume towards a contradiction that $Q$ vanishes in some direction $\tilde{e} \in \partial B_1 \setminus \{-e^N,e^N\}$. Then we conclude, applying the ACF monotonicity function to $\partial_{\tilde{e}}u$ (cf. \cite[Proof of Theorem II Case 2]{CaffarelliKarpShahgolian_Annals_2000}), that $u$ is monotone in the $\tilde{e}$-direction. But this implies that the coincidence set $\cC$ must be unbounded in this direction. Then \eqref{prop:known_results_item_2} implies that $u$ is cylindrical, contradicting our assumption.
\end{enumerate}
\end{proof}

\subsection{Reduction} \label{sec:reduction}

	For the application to the study of the behavior of the 
regular part of the free boundary close to singularities we actually need a slightly stronger version of the Main Theorem that precisely relates the sectional ellipsoids of paraboloidal coincidence sets with the asymptotic behavior (blow-down) of a global solution $u$. Instead of the Main Theorem we will prove the extended result stated below.

\begin{thmxx*}\label{thm:MainTheoremI}
	Let  $u $ be a   solution of  \eqref{eq:obstacle-global} that is cylindrical (cf. Definition \ref{def:cylindrical}) in precisely $k$ independent directions. If $N-k \geq 6$ and the coincidence set $\set {u=0}$ has non-empty interior, then the restriction of the coincidence set to the non-cylindrical directions is either an ellipsoid or a paraboloid
	(in the sense of Definition \ref{def:ellipsoid_and_paraboloid}). \\
		The ellipsoids as well as the cross sections of the  paraboloid are given as the (up to scaling and rigid motion) \emph{unique} ellipsoids that are given as the coincidence set $\{v'=0\}$ of  a solution of the ($N-k-1$)-dimensional obstacle problem with \emph{the same blow-down}, i.e. $v': \R^{N-k-1} \to [0,\infty)$ solves
		\begin{align}
			\Delta v' = \chi_{\set {v'>0}} ~&\text{ in  } \R^{N-k-1} \quad \text{ and } \\
			\lim \limits_{\rho \to \infty} \frac{v'(\rho x')}{\rho^2} = p(x')=\lim \limits_{\rho \to \infty} \frac{u(\rho x)}{\rho^2}~&\text{ in } L^\infty(\partial B_1) .
		\end{align}	
\end{thmxx*}

	Let $u$ still be a global  solution of the obstacle problem \eqref{eq:obstacle-global}.
	Rotating and considering the restriction of the solution to all non-cylindrical directions we may assume that $u$ is non-cylindrical	
	and that $k$ (defined in Main Theorem*) 
 is $0$. 
Since bounded coincidence sets of global solutions are already known to be ellipsoids (see \cite{Dive}, \cite{DiBenedettoFriedman}), we  shall henceforth discuss only the case of unbounded coincidence sets. Therefore it is sufficient to prove the following reduced version of Main Theorem*.

\begin{thmxx**}\label{thm:Main_Theorem_of_dissertation}
	\mbox{}\\
	Let $N \geq 6$ and let $u$ be a solution of \eqref{eq:obstacle-global} that is \emph{non-cylindrical} (in the sense of Definition \ref{def:cylindrical}). If furthermore $\set {u=0}$ is \emph{unbounded} and has \emph{non-empty interior}, then $\set {u=0}$ is a paraboloid (in the sense of Definition \ref{def:ellipsoid_and_paraboloid}). \\
		The cross sections of the  paraboloid are given as the (up to scaling and translation) \emph{unique} ellipsoids that are given as the coincidence set $\{v'=0\}$ of  a solution of the ($N-1$)-dimensional obstacle problem with \emph{the same blow-down}, i.e. $v': \R^{N-1} \to [0,\infty)$ solves
\begin{align}
	\Delta v' = \chi_{\set {v'>0}} ~&\text{ in  } \R^{N-1} \quad \text{ and } \\
	 \lim \limits_{\rho \to \infty} \frac{v'(\rho x')}{\rho^2} = p(x')=\lim \limits_{\rho \to \infty} \frac{u(\rho x)}{\rho^2}~&\text{ in } L^\infty(\partial B_1) .
\end{align}	
\end{thmxx**}
Note that $u$ satisfying the assumptions of Main Theorem** 
implies that the conclusion of Proposition \ref{prop:known_results} \eqref{prop:known_results_item_4} holds.
It follows that, rotating if necessary, the convex, closed set 
\begin{align}
\cC \text{ must contain a ray in the positive } e^N\text{-direction}.
\end{align}
Moreover
\begin{align}
  \partial_{N}u \leq 0 \text{ in } \R^N
 \end{align}
see \cite[Proof of Case 2 of Theorem II]{CaffarelliKarpShahgolian_Annals_2000}. 
Hence for any free boundary point $z$, we obtain that the ray $L_z:= \{z + te^N : t \geq 0\}$ is contained in the coincidence set, i.e. $L_z \subset \cC$.  Since $u$ is non-cylindrical, the coincidence set $\cC$ cannot contain a line 
{(cf. Proposition \ref{prop:known_results}
 \eqref{prop:known_results_item_3}), and using the convexity of $\cC$,  $\lambda : = \min\{ z_N:\ z\in \cC \}  > -\infty$, and hence 
\begin{align}
\cC \subset \{ x_N \geq \lambda \} \text{ for some } \lambda \in \R.
\end{align}

By the above discussion we may after suitable change of  variables assume the solution in Main Theorem**  
to satisfy

\begin{defn} \label{def:solution}
Let $u$ be a solution of 
\begin{align}
	\Delta u = \chi_{\set {u>0} } \quad , \quad u \geq 0 \quad \text{ in } \R^N
\end{align}
(in the sense of distributions) satisfying
\begin{enumerate}[(i)]
	\item $\cC$ has non-empty interior,
	\item $\set{0} = \cC \cap \set {x_N \leq 0}$,
	\item  \label{PDE_asymptotics}
$ 
u_r(x)	:= \frac{u(rx)}{r^2} \to x'^T Q x' =:p(x') \quad \text{ in } C^{1,\alpha}_{\operatorname{loc}}\cap
W^{2,p}_{\operatorname{loc}}\text{ as } r \to \infty,
$ 	where $x=(x',x_N)$, $Q  \in \R^{N-1 \times N-1}$ is positive definite, symmetric and $\tr(Q) =  \frac{1}{2}$. \\
For later reference let us state that this means that there is $c_p>0$  such that for all $x'\in \R^{N-1}$
\begin{align} \label{eq:def_of_c_p}
p(x') \geq c_p \abs {x'}^2.
\end{align}
\end{enumerate}
\end{defn}


\section{First Frequency Estimate}  \label{section:first_frequency_estimate}

In this section we derive a first estimate of the asymptotics of the given solution $u$ as in Definition \ref{def:solution} by studying the blow-down of a normalized solution. This analysis is based on the following frequency estimate which is inspired by the monotonicity formulas
in \cite{Weiss_homogeneity_improvement}, \cite{Monneau2003} as well as the frequency formula in \cite{Figalli-Serra-2020-Inventiones}.

\begin{lem}[First Frequency estimate] \label{lem:first_frequency_estimate}
	\mbox{} \\
	Let $\tilde{v}_r$ be given as 
	\begin{align}
	\tilde{v}_r := u_r -p,
	\end{align}
where  $u_r$ is the rescaling  and $p$ the blow-down limit as introduced in Definition \ref{def:solution} \eqref{PDE_asymptotics}, and let the first frequency function be defined for all $r>0$ as 
	\begin{align}
	F_1(r) := \int \limits_{B_1} \abs {\nabla \tilde{v}_r}^2 - 2 \int \limits_{\partial B_1} \tilde{v}_r^2  \dH{N-1}  .
	\end{align}
	Then $F_1$ is monotone increasing in $r$ and $F_1(r)$ is non-positive for all $r>0$.
\end{lem}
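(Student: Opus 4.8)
The plan is to treat $F_1$ as a Weiss--Almgren-type energy for the function $w := u - p$ and differentiate it directly in $r$. Since $p$ is $2$-homogeneous one has $\tilde{v}_r(x) = w(rx)/r^2$, i.e.\ $\tilde{v}_r$ is the $2$-homogeneous rescaling of $w$; moreover $\Delta w = \chi_{\set{u>0}} - \Delta p = \chi_{\set{u>0}} - 2\tr Q = \chi_{\set{u>0}} - 1 = -\chi_{\cC}$ (both a.e.\ and in $\cD'$), so $\Delta \tilde{v}_r = -\chi_{\cC_r}$ with $\cC_r := r^{-1}\cC$. A direct computation gives the standard first-variation identity $r\,\partial_r\tilde{v}_r = x\cdot\nabla\tilde{v}_r - 2\tilde{v}_r =: z_r$. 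Because $u\in C^{1,1}_{\loc}(\R^N)$ by Proposition \ref{prop:known_results}, the map $r\mapsto\tilde{v}_r$ is locally Lipschitz with values in $C^1(\overline{B_1})\cap W^{2,2}(B_1)$, so $F_1$ is locally absolutely continuous and it suffices to compute $F_1'(r)$ for a.e.\ $r$.

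Differentiating under the integral, using $\partial_r\tilde{v}_r = z_r/r$, integrating by parts in $B_1$ (legitimate as $\tilde{v}_r\in W^{2,2}(B_1)$, $z_r\in W^{1,2}(B_1)$), and using the identity $x\cdot\nabla\tilde{v}_r = z_r + 2\tilde{v}_r$ on $\partial B_1$, I expect to arrive at
\begin{align}
F_1'(r) = \frac{2}{r}\left(\int_{\partial B_1} z_r^2 \dH{N-1} - \int_{B_1} z_r\,\Delta\tilde{v}_r\dx{x}\right).
\end{align}
The crucial point is that the last integral vanishes: on $\cC_r$ one has $u_r\equiv 0$, hence $\tilde{v}_r = -p$ there and, since $\nabla u_r = 0$ on the coincidence set, $\nabla\tilde{v}_r = -\nabla p$ $\cC_r$-a.e.; the $2$-homogeneity of $p$ gives $x\cdot\nabla p = 2p$, so $z_r = -x\cdot\nabla p + 2p \equiv 0$ on $\cC_r$. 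As $\Delta\tilde{v}_r = -\chi_{\cC_r}$, this yields $\int_{B_1}z_r\Delta\tilde{v}_r = -\int_{B_1\cap\cC_r}z_r\dx{x} = 0$, hence $F_1'(r) = \frac{2}{r}\int_{\partial B_1}z_r^2\dH{N-1}\ge 0$, which is the claimed monotonicity.

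For the sign, since $F_1$ is increasing it is enough to check $\lim_{r\to\infty}F_1(r)\le 0$. By Definition \ref{def:solution}\,\eqref{PDE_asymptotics} we have $u_r\to p$ in $C^{1,\alpha}_{\loc}$, so $\tilde{v}_r\to 0$ in $C^1(\overline{B_1})$; therefore $\int_{B_1}\abs{\nabla\tilde{v}_r}^2\to 0$ and $\int_{\partial B_1}\tilde{v}_r^2\to 0$, i.e.\ $F_1(r)\to 0$ as $r\to\infty$. Combined with monotonicity this gives $F_1(r)\le 0$ for every $r>0$.

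The algebra is short; the steps needing care are the regularity bookkeeping in the second paragraph — the local Lipschitz dependence of $\tilde{v}_r$ on $r$, differentiation under the integral, and the integration by parts with merely $C^{1,1}$ data — together with the verification that $z_r$ vanishes $\cC_r$-a.e., which rests on $\nabla u_r = 0$ on the coincidence set. I do not anticipate a genuine obstacle beyond these technical justifications.
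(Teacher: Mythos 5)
Your proof is correct and follows essentially the same route as the paper: differentiate $F_1$ in $r$, integrate by parts, observe that the Euler-type operator $z_r = r\,\partial_r\tilde{v}_r = x\cdot\nabla\tilde{v}_r - 2\tilde{v}_r$ vanishes a.e.\ on $\{u_r=0\}$ (so the bulk term $\int_{B_1} z_r\,\Delta\tilde{v}_r$ drops), and then deduce non-positivity from monotonicity combined with $\tilde{v}_r\to 0$ in $C^1(\overline{B_1})$. The paper simply marks $\Delta\tilde{v}_r\,\partial_r\tilde{v}_r\equiv 0$ with an underbrace, whereas you spell out the reason via $\nabla u_r=0$ on the coincidence set and $x\cdot\nabla p=2p$, which is the same observation made explicit.
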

\begin{proof}
	Note that $\tilde{v}_r$ solves $\Delta \tilde{v}_r= -\chi_{\set {u_r=0}}$ in $\R^N$.
	Observe that $F_1$ is monotone increasing as
	\begin{align}
	\frac{\dx{}}{\dx{r}} F_1(r) &= 2 \int \limits_{B_1} \nabla \tilde{v}_r \cdot \nabla \partial_r \tilde{v}_r - 4 \int \limits_{\partial B_1} \tilde{v}_r \partial_r \tilde{v}_r  \dH{N-1} \\
	&= 2 \left(\int \limits_{\partial B_1} \nabla \tilde{v}_r \cdot x ~\partial_r \tilde{v}_r  \dH{N-1} - \int \limits_{B_1} \underbrace{\Delta \tilde{v}_r \partial_r \tilde{v}_r}_{=0} - 2 \int \limits_{\partial B_1} \tilde{v}_r \partial_r \tilde{v}_r        \dH{N-1}       \right) \\
	&= 2 \int \limits_{\partial B_1} \bra { \nabla \tilde{v}_r \cdot x -2 \tilde{v}_r  } \partial_r \tilde{v}_r  \dH{N-1} = {2} r \int \limits_{\partial B_1} \bra {\partial_r \tilde{v}_r}^2  \dH{N-1} \geq 0.
	\end{align}
	By definition of $p$ we know that
	\begin{align}
	\tilde{v}_r \to 0 \quad \text{ in } C_{\operatorname{loc}}^{1,\alpha}(\R^N) \quad  \text{as } r \to \infty  .
	\end{align}
	We conclude that
	\begin{align} \label{eq:first_frequency_bound}
	\lim \limits_{r \to \infty} \bra { \int \limits_{B_1} \abs {\nabla \tilde{v}_r}^2 -2 \int \limits_{\partial B_1} \tilde{v}_r^2  \dH{N-1}  }= 0.
	\end{align}
	As $F_1$ is monotone increasing it follows that
	\begin{align}
	\int \limits_{B_1} \abs {\nabla \tilde{v}_r}^2 \leq 2 \int \limits_{\partial B_1} \tilde{v}_r^2 \dH{N-1}
	\end{align}
	 for all $r >0$ and that $F_1$ is non-positive.
\end{proof}

\subsection{The second term in the asymptotic expansion at infinity}

We already know that $u$ has quadratic growth at infinity and that its leading order asymptotics is given by $p$. In order to get information on the next order in the asymptotic expansion the usual ansatz is to normalize $u_r-p$ and pass to the limit $r \to \infty$ in the normalization. For technical reasons we will in the present section subtract a projection from this difference. Note however that as a result of Section \ref{section:Newton_potential_expansion_of_u}, we will at that stage be able to determine the limit of the normalized $u_r-p$, too. 

We define for all $r>0$
\begin{align} \label{def:v_r}
v_r := u_r -p -h_r, 
\end{align} 
where $h_r(x') :=\Pi' (u_r-p)$ with 
\begin{align} \label{def:h_r}
 \Pi' (u_r-p)	\text{ being the }L^2(\partial B_1)\text{-projection of } u_r -p \text{ onto } \cP_2' ,
\end{align} 
and
	$\cP_2'$ is the set of homogeneous harmonic polynomials of degree $2$ depending only on $x'$.
Note that $v_r$ solves
\begin{align}
	\Delta v_r = - \chi_{\set {u_r=0}} \quad \text{ in } \R^N \text{ for all } r>0.
\end{align}
Recall that for all $r>0$ we have assumed that $h_r$ is harmonic and homogeneous of degree $2$ and note that
$F_1$ is invariant with respect to perturbations by any
harmonic homogeneous polynomial $q$ of degree $2$, i.e. for all $r>0$,
\begin{align}
		F_1[\tilde{v}+q](r) &:= \int \limits_{B_1} \abs { \nabla (\tilde{v}_r +q)}^2 -2 \int \limits_{\partial B_1} \bra { \tilde{v}_r +q }^2 \dH{N-1} \\
		&=  \int \limits_{B_1} \abs { \nabla \tilde{v}_r }^2 -2 \int \limits_{\partial B_1} \bra { \tilde{v}_r  }^2 \dH{N-1} =: F_1[\tilde{v}](r) . \label{eq:frequency_is_independent_of_harmonic_quadratic_polynomials}
	\end{align}
Hence we conclude that $v_r$ satisfies the same
frequency estimate as $\tilde{v}_r$, i.e. for all $r>0$
\begin{align} \label{eq:first_frequency_estimate}
	\frac{\int_{B_1} \abs{\nabla v_r}^2 }{\int_{\partial B_1} v_r^2  \dH{N-1}} \leq 2.
\end{align}
This immediately implies a first estimate on the normalized family
\begin{align} \label{def:w_r}
w_r := \frac{v_r}{\sqrt{\int_{\partial B_1} v_r^2  \dH{N-1}}},
\end{align}
i.e. by a Poincar\'e Lemma
\begin{align}
	w_r \text{ is bounded in } W^{1,2}(B_1) \text{ uniformly in } r>0.
\end{align}
However this bound is valid only in $B_1$.
The following Lemma will give a local bound in
$\R^N$.
\begin{lem} \label{lem:w_r_in_W_1_2_loc}
	Let $w_r$ be as defined above. Then
	\begin{align}
	(w_r)_{r>1} \text{ is bounded in } W^{1,2}_{\operatorname{loc}}(\R^N).
	\end{align}
\end{lem}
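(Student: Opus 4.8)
The plan is to upgrade the $W^{1,2}(B_1)$ bound already obtained from the first frequency estimate \eqref{eq:first_frequency_estimate} to a $W^{1,2}(B_R)$ bound for each fixed $R>1$, uniformly in $r$. The natural tool is an energy (Caccioppoli-type) inequality for $w_r$ combined with an $L^2(\partial B_R)$ bound obtained by a "doubling" argument exploiting the monotonicity of $F_1$. Concretely, I would first record that $w_r$ solves $\Delta w_r = -\chi_{\{u_r=0\}}/\sqrt{\int_{\partial B_1}v_r^2\,d\cH^{N-1}}$ in $\R^N$; since the right-hand side is globally bounded by $C/\sqrt{\int_{\partial B_1}v_r^2}$ on any fixed ball (using $\|D^2u\|_{L^\infty}\le C$ from Proposition \ref{prop:known_results}(i), which controls $\|D^2 v_r\|_\infty$ hence the size of $v_r$ on bounded sets relative to its $L^2(\partial B_1)$ norm — more on this obstruction below), a standard Caccioppoli inequality gives, for any $1<\rho<R$,
\begin{align}
\int_{B_\rho}\abs{\nabla w_r}^2 \;\le\; \frac{C}{(R-\rho)^2}\int_{B_R} w_r^2 \;+\; C(R)\,,
\end{align}
so everything reduces to bounding $\int_{B_R} w_r^2$, equivalently (by the already-known $B_1$ bound and interpolation) bounding $\int_{\partial B_R} w_r^2\,d\cH^{N-1}$ uniformly in $r$.

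The key step is the $L^2$-sphere bound, and here I would use the scaling relation between $v_r$ at radius $R$ and $v_{Rr}$ at radius $1$. Since $u_{Rr}(x) = u_r(Rx)/R^2$ and $p$ is $2$-homogeneous, one checks $v_{Rr}(x) = R^{-2}v_r(Rx) + (\text{harmonic quadratic correction from }h)$; the harmonic-quadratic terms are harmless because $F_1$ and the normalization only see $v_r$ up to $\cP_2'$. Thus $\int_{\partial B_1} v_{Rr}^2\,d\cH^{N-1}$ is comparable (up to an $R$-dependent constant and the projection correction) to $R^{-N+3}\int_{\partial B_R}v_r^2\,d\cH^{N-1}$, which rearranges to
\begin{align}
\int_{\partial B_R} w_r^2 \,d\cH^{N-1}\;\le\; C(R)\,\frac{\int_{\partial B_1}v_{Rr}^2\,d\cH^{N-1}}{\int_{\partial B_1}v_r^2\,d\cH^{N-1}}\;+\;C(R)\,.
\end{align}
The ratio on the right is then controlled by the monotonicity of $F_1$: from $F_1(Rr)\ge F_1(r)$ and the definition of $F_1$ together with $\frac{d}{ds}\int_{\partial B_1}v_s^2 = \frac{2}{s}(\int_{B_1}\abs{\nabla v_s}^2 - F_1(s))$ — using $F_1\le 0$ — one obtains a Grönwall-type differential inequality for $s\mapsto \int_{\partial B_1}v_s^2\,d\cH^{N-1}$ that forces at worst polynomial growth in $s$; hence the ratio at $s=Rr$ versus $s=r$ is bounded by $C(R)$ uniformly in $r\ge 1$. (Alternatively, this is exactly a standard consequence of an "almost monotone frequency $\le 2$" bound, giving a doubling estimate $\int_{\partial B_{2\rho}} v_r^2 \le C\,\rho^{2\cdot 2 + N-1}$-type control.)

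The main obstacle I anticipate is the \emph{normalization denominator near the transition}: a priori $\int_{\partial B_1}v_r^2\,d\cH^{N-1}$ could degenerate for small $r$, so the Caccioppoli right-hand side $C/\sqrt{\int_{\partial B_1}v_r^2}$ would blow up. This is why the Lemma restricts to $r>1$ and why the frequency estimate is essential: \eqref{eq:first_frequency_estimate} says $v_r$ cannot concentrate its gradient while having small boundary $L^2$ norm, and the scaling/Grönwall argument above transfers the $B_1$-normalization to $B_R$ without reintroducing a bad constant — the polynomial-growth bound on $s\mapsto\int_{\partial B_1}v_s^2$ is uniform in the base point. So the real work is making the doubling estimate for $\int_{\partial B_R}v_r^2\,d\cH^{N-1}$ precise (tracking the projection-onto-$\cP_2'$ corrections through the scaling), after which the Caccioppoli inequality and the known $W^{1,2}(B_1)$ bound close the argument by a covering/iteration over dyadic radii up to $R$.
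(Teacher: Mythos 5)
Your overall architecture — reduce to a uniform bound on the ratio $\int_{\partial B_R}v_r^2\,d\cH^{N-1} / \int_{\partial B_1}v_r^2\,d\cH^{N-1}$ via the frequency estimate, then prove this doubling bound — is the same as the paper's. The difference is in how you propose to prove the doubling, and that is where there is a genuine gap.

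The key step of your plan hinges on the claimed identity
$\frac{d}{ds}\int_{\partial B_1}v_s^2\,d\cH^{N-1} = \frac{2}{s}\bigl(\int_{B_1}|\nabla v_s|^2 - F_1(s)\bigr)$.
But since $F_1(s) = \int_{B_1}|\nabla v_s|^2 - 2\int_{\partial B_1}v_s^2\,d\cH^{N-1}$, the right-hand side is literally $\frac{4}{s}\int_{\partial B_1}v_s^2\,d\cH^{N-1}$, so the ``identity'' would be an exact Euler ODE with solution $Cs^4$ — incompatible with the fact that $v_s\to 0$ in $C^{1,\alpha}_{\loc}$ (hence $\int_{\partial B_1}v_s^2\,d\cH^{N-1}\to 0$). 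The correct computation, using $\tilde{v}_s\partial_s\tilde v_s = \frac{1}{s}\tilde v_s(\nabla\tilde v_s\cdot x - 2\tilde v_s)$ and Green's identity together with $\Delta\tilde v_s = -\chi_{\{u_s=0\}}$ and $\tilde v_s = -p$ on $\{u_s=0\}$, gives
$\frac{d}{ds}\int_{\partial B_1}\tilde v_s^2\,d\cH^{N-1} = \frac{2}{s}F_1(s) + \frac{2}{s}\int_{B_1\cap\{u_s=0\}}p\,dx$,
and the term $\int_{B_1\cap\{u_s=0\}}p\,dx$ — coming precisely from the fact that $v_s$ is not harmonic — has no a priori control by $\int_{\partial B_1}v_s^2\,d\cH^{N-1}$. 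So the Grönwall inequality does not close. (And even setting this aside, ``at worst polynomial growth of $\phi(s)$'' does not by itself bound $\phi(Rr)/\phi(r)$ uniformly in $r$; for that you would need a log-derivative bound $(\log\phi)'\le C/s$, which is exactly what fails because of the extra term.)

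The paper sidesteps this by proving the doubling through a compactness/contradiction argument: assume $\int_{\partial B_{R_0}}v_{r_k}^2 / \int_{\partial B_1}v_{r_k}^2\to\infty$, renormalize by $\int_{\partial B_{R_0}}v_{r_k}^2$, extract a weak $W^{1,2}(B_{R_0})$ limit $\tilde w$ which is harmonic away from the ray $\{te^N:t\ge 0\}$ (and genuinely harmonic in $B_{R_0}$ for $N\ge 3$ since that ray has zero $2$-capacity), has unit trace on $\partial B_{R_0}$ and zero trace on $\partial B_1$ — a contradiction by the maximum principle and analyticity. This soft argument precisely absorbs the problematic term $\int_{B_1\cap\{u_s=0\}}p\,dx$ in the limit (it vanishes because the coincidence set shrinks to the ray). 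A secondary remark: your Caccioppoli step is also not uniform in $r$, because $\|\Delta w_r\|_\infty \sim (\int_{\partial B_1}v_r^2\,d\cH^{N-1})^{-1/2}\to\infty$; the paper avoids this by getting the gradient bound directly from the rescaled frequency estimate $\int_{B_R}|\nabla(\tilde v_r - q)|^2 \le \frac{2}{R}\int_{\partial B_R}(\tilde v_r - q)^2\,d\cH^{N-1}$, which carries no additive error.
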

\begin{proof} 
	\mbox{}\\
\textbf{Step 1.} \emph{Frequency bound on large balls and Poincaré's lemma.}\\
We claim that for each $R>0$ there is $C_1(R) < + \infty$ such that for all $r>0$ and $q \in \cP_2'$
\begin{align} \label{eq:frequency_estimate_on_large_spheres}
\int \limits_{B_R} \abs{ \nabla (\tilde{v}_r-q) }^2 \leq C_1(R) \int \limits_{\partial B_R} (\tilde{v}_r-q)^2 \dH{N-1}.
\end{align}
This is a direct consequence of Lemma \ref{lem:first_frequency_estimate} and \eqref{eq:frequency_is_independent_of_harmonic_quadratic_polynomials}. From these two we infer that for every $R,r>0$ and $q \in \cP_2'$
\begin{align}
	2 \geq \frac{\int \limits_{B_1} \abs{ \nabla ( \tilde{v}_{rR} -q )  }^2  }{ \int \limits_{\partial B_1} (\tilde{v}_{rR} -q )^2 \dH{N-1}  } = R  \frac{  \int \limits_{B_R}  \abs{ \nabla( \tilde{v}_r-q )  }^2  }{  \int \limits_{ \partial B_R } (\tilde{v}_r-q)^2 \dH{N-1}  }
\end{align}
which proves the claim. \\
We are now going to apply the following Poincaré-Lemma: There is $C_2(R) < \infty$ such that for every $g \in W^{1,2}(B_R)$:
\begin{align}
	\norm{g}^2_{L^2(B_R)} \leq C_2(R) \bra { \norm{\nabla g}^2_{L^2(B_R)} + \norm{g}^2_{L^2(\partial B_R)}   }.
\end{align}
Combining this Poincaré-Lemma with \eqref{eq:frequency_estimate_on_large_spheres} (for $q= h_r$) 
 we conclude that for each $R>1$ and every $r>0$,
\begin{align}
\norm{ w_r }^2_{W^{1,2}(B_R)} &= \frac{ \int_{B_R} \abs{ \nabla (\tilde{v}_r-h_r)  }^2 + \int_{B_R} ( \tilde{v}_r -h_r )^2   }{ \int_{\partial B_1} (\tilde{v}_r-h_r )^2 \dH{N-1}  } \\
 &\leq C_3(R) \frac{ \int_{B_R} \abs{\nabla v_r  }^2 + \int_{\partial B_R} v_r^2 \dH{N-1}  }{ \int_{\partial B_1} v_r^2 \dH{N-1} } 
\leq C_4(R) \frac{ \int_{\partial B_R} v_r^2 \dH{N-1}  }{\int_{\partial B_1} v_r^2 \dH{N-1}  }.
\end{align}
	\textbf{Step 2.} \\
Hence the claim of this lemma will be proved once we have shown that for each $R>1$ 
\begin{align}\label{eq:doubling_bound}
	\frac{ \int_{\partial B_R} v_r^2 \dH{N-1}  }{\int_{\partial B_1} v_r^2 \dH{N-1}  } \leq C_5(R) \quad \text{ for every } r>0.
\end{align}
We assume towards a contradiction that this is not true, i.e. there is $R_0>1$ and a sequence $(r_k)_{k \in \N}$, $r_k \to \infty$ as $k \to \infty$ such that 
	\begin{align} \label{proof:unif_W_1_2_bound_assump}
		\frac{\int_{\partial B_{R_0}} v_{r_k}^2  \dH{N-1}}{\int_{\partial B_1} v_{r_k}^2 \dH{N-1}} \to \infty \quad \text{ as } k \to \infty.
	\end{align}

Let us now set for each $k \in \N$
\begin{align}
	\tilde{w}_{r_k}(x) := \frac{v_{r_k}(x)}{\sqrt{  \int_{\partial B_{R_0}} v_{r_k}^2 \dH{N-1}  } }.
\end{align}	
	From \eqref{eq:frequency_estimate_on_large_spheres} and a Poincaré-Lemma we infer that $(\tilde{w}_{r_k})_{k\in \N}$ is bounded in $W^{1,2}(B_{R_0})$. Passing to a subsequence if necessary,
		\begin{align} \label{def:w}
		\tilde{w}_{r_k} \rightharpoonup \tilde{w} \quad \text{ weakly in } W^{1,2}(B_{R_0})\text{ as }k\to\infty.
	\end{align}
	Furthermore we infer from the compact embeddings $W^{1,2}(B_{R_0})\hookrightarrow L^2(\partial B_{R_0})$ and $W^{1,2}(B_1)$ $\hookrightarrow L^2(\partial B_1)$ (recall that ${R_0} >1$)  that
	\begin{align} \label{eq:convergence_of_w_r_k_on_the_boundary}
	\begin{split}
		\tilde{w}_{r_k} \to \tilde{w} \quad &\text{ strongly in } L^2(\partial B_{R_0})\text{ and} \\
		\tilde{w}_{r_k} \to \tilde{w}   \quad &\text{ strongly in } L^2(\partial B_1)\text{ as }k\to\infty.
		\end{split}
	\end{align}
	By construction, $\int_{\partial B_{R_0}} \tilde{w}^2_{r_k}  \dH{N-1}=1$
for all $k \in \N$,
        so
	\begin{align} \label{eq:trace_of_w_on_B_1}
		\int \limits_{\partial B_{R_0}} \tilde{w}^2\dH{N-1} = 1  .
	\end{align}
	On the other hand, using \eqref{proof:unif_W_1_2_bound_assump} and \eqref{eq:convergence_of_w_r_k_on_the_boundary} we obtain that
	\begin{align}
		\int \limits_{\partial B_1} \tilde{w}^2  \dH{N-1} &\leftarrow 	\int \limits_{\partial B_1} \tilde{w}^2_{r_k}  \dH{N-1}= \frac{\int_{\partial B_1} v_{r_k}^2  \dH{N-1}}{\int_{\partial B_{R_0}} v_{r_k}^2  \dH{N-1}}  \to 0 \quad \text{ as } k \to \infty.
	\end{align} 
	Note that in dimension $N\ge 3$, $\tilde{w}$ is harmonic in $B_{R_0}$ since it has been constructed as the weak $W^{1,2}$-limit of $(\tilde{w}_ {r_k})_ {k\in \N}$ in \eqref{def:w}, and $\Delta \tilde{w} =0$ in $B_{R_0}$ up to the set $\{t e^N : t \geq 0  \}$ which is due to Definition \ref{def:solution} \eqref{PDE_asymptotics} of $2$-capacity zero . In dimension $N=2$, $\tilde{w}$ is harmonic in $B_1 \setminus \set {t e^N : t \geq 0}$ and zero on $B_1 \cap \set {t e^N : t \geq 0}$. 
	
	Consequently the maximum principle for harmonic functions implies that
	\begin{align}
		\tilde{w}\equiv 0 \quad \text{ in } B_ 1.
	\end{align}
	Since $w$, being harmonic in $\R^N \setminus \{ t e^N : t \geq 0   \}$, is analytic in that open set, it
        follows that $\tilde{w}\equiv 0$ in 
        $B_{R_0}$ contradicting
 \eqref{eq:trace_of_w_on_B_1}. 
	Therefore the assumption \eqref{proof:unif_W_1_2_bound_assump} must be false and the Lemma is proved.
\end{proof}

Lemma \ref{lem:w_r_in_W_1_2_loc} allows us to conclude that each limit of $w_r$ must be harmonic in $\R^N$:

\begin{prop} \label{prop:w_is_harmonic_for_N_geq_3}
	Let $N \geq 3$ and let $(w_r)_{r>0}$ be as defined in \eqref{def:w_r}. Then there is a sequence $r_k\to\infty$ such that
\begin{align} \label{eq:weak_convergence_of_w_r}
w_{r_k} \rightharpoonup w \quad \text{ weakly in } W^{1,2}_{\operatorname{loc}}(\R^N) \text{ as } k \to \infty,
\end{align}
and $w$ is harmonic in $\R^N$.
\end{prop}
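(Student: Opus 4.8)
The plan is to extract a weakly convergent subsequence using the uniform bound from Lemma~\ref{lem:w_r_in_W_1_2_loc} together with a diagonal argument, and then to show the limit is harmonic by passing to the limit in the equation $\Delta w_r = \Delta v_r / \|v_r\|_{L^2(\partial B_1)}$. First I would cover $\R^N$ by an exhaustion $B_1 \subset B_2 \subset \cdots$; since $(w_r)_{r>1}$ is bounded in $W^{1,2}(B_m)$ for each fixed $m$, reflexivity of $W^{1,2}(B_m)$ and a standard diagonal extraction over $m$ produce a single sequence $r_k \to \infty$ with $w_{r_k} \rightharpoonup w$ weakly in $W^{1,2}(B_m)$ for every $m$, hence weakly in $W^{1,2}_{\operatorname{loc}}(\R^N)$; this gives \eqref{eq:weak_convergence_of_w_r}.

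To identify $w$ as harmonic, recall that $v_r$ solves $\Delta v_r = -\chi_{\{u_r = 0\}}$ in $\R^N$, so that $w_r := v_r / \sqrt{\int_{\partial B_1} v_r^2 \dH{N-1}}$ satisfies, in the distributional sense,
\begin{align}
\Delta w_{r_k} = -\frac{\chi_{\{u_{r_k}=0\}}}{\sqrt{\int_{\partial B_1} v_{r_k}^2 \dH{N-1}}}.
\end{align}
The key point is that the right-hand side tends to zero: by Definition~\ref{def:solution}~\eqref{PDE_asymptotics} the rescalings $u_{r}$ converge to $p$ in $C^{1,\alpha}_{\operatorname{loc}} \cap W^{2,p}_{\operatorname{loc}}$, and since $p(x') \geq c_p |x'|^2 > 0$ away from the line $\{t e^N : t \geq 0\}$, for any compact $K \subset \R^N$ not meeting a neighbourhood of that line the set $\{u_{r_k}=0\} \cap K$ is empty for $k$ large, so $\chi_{\{u_{r_k}=0\}} \to 0$ in $L^1_{\operatorname{loc}}(\R^N)$ (the line being Lebesgue-null). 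The denominator $\sqrt{\int_{\partial B_1} v_{r_k}^2 \dH{N-1}}$ is bounded below since this quantity does not blow up — indeed, from the first frequency estimate $F_1(r) \le 0$ one controls it, and in any case $\|\chi_{\{u_{r_k}=0\}}\|_{L^1(B_1)} \to 0$ while the denominator appears to the first power, so the full right-hand side converges to $0$ in $L^1_{\operatorname{loc}}(\R^N)$. (Even if the denominator were not bounded below, the numerator goes to zero faster on compacts, which suffices.) Testing against $\phi \in C_c^\infty(\R^N)$ and using weak $W^{1,2}_{\operatorname{loc}}$-convergence on the left, $\int \nabla w_{r_k} \cdot \nabla \phi \to \int \nabla w \cdot \nabla \phi$, while the right-hand side $\to 0$, yields $\int \nabla w \cdot \nabla \phi = 0$ for all such $\phi$, i.e. $\Delta w = 0$ in $\R^N$ in the distributional sense, hence $w$ is harmonic by Weyl's lemma.

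The main obstacle I anticipate is the rigorous handling of the line $\{t e^N : t \geq 0\}$, where $p$ vanishes and where $u_{r_k}=0$ may genuinely hold: one must argue that the contribution of $\chi_{\{u_{r_k}=0\}}$ near that line is negligible. Since the line has zero $N$-dimensional Lebesgue measure and, by Definition~\ref{def:solution}, the coincidence set of $u_r$ near the line is contained in a shrinking neighbourhood of it (as $r \to \infty$, by $C^{1,\alpha}_{\operatorname{loc}}$-convergence to $p$ and positivity of $p$ off the line), $\|\chi_{\{u_{r_k}=0\}}\|_{L^1(B_m)} \to 0$ for every $m$, which is exactly what is needed. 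The remaining steps — diagonal extraction, testing the equation, and invoking Weyl's lemma — are routine.
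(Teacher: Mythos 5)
Your extraction of a diagonal subsequence is fine and essentially what the paper does implicitly. Away from the ray $\{te^N: t \ge 0\}$, your argument also works — there, $\chi_{\{u_{r_k}=0\}}$ is identically zero on compacts for large $k$ (by $C^{1,\alpha}_{\rm loc}$-convergence and positivity of $p$ off the ray), so $\Delta w_{r_k} \equiv 0$ there and weak convergence gives $\Delta w = 0$ in $\R^N \setminus \{te^N : t \ge 0\}$.

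The gap is in the step where you test against a $\phi \in C_c^\infty$ whose support meets the ray. You claim the right-hand side $-\chi_{\{u_{r_k}=0\}}/\sqrt{\int_{\partial B_1} v_{r_k}^2\,\dH{N-1}}$ converges to $0$ in $L^1_{\rm loc}$, but both the numerator's $L^1$ mass and the denominator tend to $0$ as $r_k \to \infty$, so this requires a rate comparison that is not available. Your assertion that the denominator is ``bounded below'' is false (it goes to zero, since $v_r \to 0$), and the frequency estimate $F_1(r)\le 0$ gives no lower bound on $\int_{\partial B_1} v_r^2\,\dH{N-1}$. In fact, testing $\Delta v_r = -\chi_{\{u_r=0\}}$ against a cutoff together with Lemma \ref{lem:w_r_in_W_1_2_loc} only shows $|\{u_r=0\}\cap B_1| \lesssim \sqrt{\int_{\partial B_1} v_r^2}$, i.e.\ the ratio is \emph{bounded}, not necessarily vanishing. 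Also, the line being Lebesgue-null is not sufficient: a set of measure zero need not be removable for harmonic $W^{1,2}$ functions. The paper instead invokes that $\{te^N : t \in \R\}$ has \emph{$2$-capacity zero} when $N \ge 3$ (codimension $\ge 2$), so that harmonicity off the ray plus $w \in W^{1,2}_{\rm loc}$ implies harmonicity in all of $\R^N$ by a removable-singularity theorem. Your argument needs to be replaced by this capacity/removability step.
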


\begin{proof}

From Lemma \ref{lem:w_r_in_W_1_2_loc} it follows that
\begin{align} 
	w_{r_k} \rightharpoonup w \quad \text{ weakly in } W^{1,2}_{\operatorname{loc}}(\R^N) \text{ as } k \to \infty.
\end{align}
Using the assumption on the blow-down in Definition  \ref{def:solution} \eqref{PDE_asymptotics} we obtain that
\begin{align}
	\Delta w = 0 \quad \text{ in } \R^N \setminus \set {t e^N \in \R^N : t \in \R }.
\end{align}
Since $\set {t e^N \in \R^N : t \in \R }$ is a set of $2$-capacity zero in dimensions $N \geq 3$, we infer that in these dimensions
\begin{align}
	\Delta w = 0 \quad \text{ in } \R^N.
\end{align}
\end{proof}

\begin{lem}[The limit $w$ is a quadratic polynomial] \label{lem:quadratic_growth_of_w}
	\mbox{} \\
	Let $N\ge 3$ and let $w$ be as defined in Proposition \ref{prop:w_is_harmonic_for_N_geq_3} \eqref{eq:weak_convergence_of_w_r}. Then $w$ is a harmonic polynomial of degree $\leq 2$.
\end{lem}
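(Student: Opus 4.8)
The plan is to show that the harmonic limit $w$ grows at most quadratically at infinity, so that a Liouville-type argument forces it to be a polynomial of degree $\le 2$. The key point is that the frequency estimate \eqref{eq:first_frequency_estimate}, which is preserved under passing to the weak limit, already encodes quadratic growth in disguise. Concretely, for each fixed $R>0$ I would pass to the limit $k\to\infty$ in the scaled inequality
\begin{align}
R\,\frac{\int_{B_R}\abs{\nabla w_{r_k}}^2}{\int_{\partial B_R} w_{r_k}^2\dH{N-1}}\le 2,
\end{align}
using the strong $L^2(\partial B_R)$ convergence of $w_{r_k}$ (from the compact embedding $W^{1,2}(B_{R'})\hookrightarrow L^2(\partial B_R)$ for $R'>R$, exactly as in Lemma \ref{lem:w_r_in_W_1_2_loc}) and weak lower semicontinuity of the Dirichlet energy. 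This yields, for the harmonic function $w$ and every $R>0$,
\begin{align} \label{eq:frequency_bound_on_w}
\int\limits_{B_R}\abs{\nabla w}^2\le \frac{2}{R}\int\limits_{\partial B_R} w^2\dH{N-1}.
\end{align}
One should first check $\int_{\partial B_1} w^2\dH{N-1}=1$ (so $w\not\equiv 0$) via the strong $L^2(\partial B_1)$ convergence and the normalization \eqref{def:w_r}, though this is only needed to see $w$ is genuinely nonzero and not strictly for the polynomial bound.

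Next I would translate \eqref{eq:frequency_bound_on_w} into growth control. Expand $w$ in spherical harmonics, $w=\sum_{j\ge 0} c_j(r)\,Y_j$ with $c_j(r)=a_j r^j$ since $w$ is harmonic in all of $\R^N$ (no negative powers, as $w\in W^{1,2}_{\operatorname{loc}}$ near the origin). Then $\int_{\partial B_R} w^2\,d\cH^{N-1}=R^{N-1}\sum_j a_j^2 R^{2j}$ and $\int_{B_R}\abs{\nabla w}^2$ is, up to dimensional constants, $\sum_j j\, a_j^2 R^{N-2+2j}$. Plugging into \eqref{eq:frequency_bound_on_w} gives, after dividing by $R^{N-2}$,
\begin{align}
\sum_{j\ge 0} \Big(\tfrac{j}{2}-1\Big) a_j^2 R^{2j}\le 0\qquad\text{for every }R>0.
\end{align}
Letting $R\to\infty$ forces $a_j=0$ for all $j\ge 3$ (the coefficient $\tfrac{j}{2}-1$ is strictly positive), and $a_1$ drops out harmlessly. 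Hence $w=a_0+ (\text{degree-}1)+(\text{harmonic degree-}2)$, i.e. a harmonic polynomial of degree $\le 2$. Alternatively, and perhaps cleaner to write, one observes that \eqref{eq:frequency_bound_on_w} says the Almgren-type frequency $N_w(R):=R\int_{B_R}\abs{\nabla w}^2/\int_{\partial B_R}w^2$ is $\le 2$ for all $R$; since for harmonic $w$ this frequency is monotone nondecreasing in $R$ and converges to the degree of the top-order spherical-harmonic component as $R\to\infty$, the bound $N_w\le 2$ directly yields $\deg w\le 2$.

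The main obstacle is the limit passage in the frequency quotient: one must ensure the denominators $\int_{\partial B_R} w_{r_k}^2\dH{N-1}$ do not degenerate to zero along the subsequence, so that the inequality survives. This is handled by choosing $R$ small (say $R=1$), where $\int_{\partial B_1} w_{r_k}^2\dH{N-1}=1$ by construction, and then using the already-established doubling bound \eqref{eq:doubling_bound} / the $W^{1,2}_{\operatorname{loc}}$ bound of Lemma \ref{lem:w_r_in_W_1_2_loc} to propagate control to all $R>1$ before taking the limit; strong $L^2$ convergence on spheres then transfers positivity of the denominator to $w$. A minor secondary point is justifying that $w$ has no singularity at the origin and no subharmonic defect — but this is immediate since $w\in W^{1,2}_{\operatorname{loc}}(\R^N)$ is harmonic in all of $\R^N$ by Proposition \ref{prop:w_is_harmonic_for_N_geq_3}, hence smooth, and the spherical-harmonic expansion has only nonnegative powers of $r$.
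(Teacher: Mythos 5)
Your proof is correct, and it takes a genuinely different route from the paper's after the common first step. Both you and the paper first extract the frequency bound for the limit $w$,
\begin{align}
R \int\limits_{B_R} \abs{\nabla w}^2 \le 2 \int\limits_{\partial B_R} w^2 \dH{N-1} \quad\text{ for all } R>0,
\end{align}
by scaling \eqref{eq:first_frequency_estimate} to radius $R$, then passing to the limit using weak lower semicontinuity of the Dirichlet energy and the compact trace embedding (the multiplicative form of the inequality also sidesteps any worry about the denominator degenerating). From there the two arguments diverge. The paper converts the bound into a differential inequality $y'(R) \le \tfrac{4}{R} y(R)$ for $y(R):=\int_{\partial B_1} w(Rx)^2 \dH{N-1}$, integrates to get $y(R)\lesssim R^4$, upgrades to a volume bound, and then uses the mean-value property of harmonic functions twice to obtain a uniform pointwise bound on $D^2 w$, so that Liouville forces $D^2 w$ to be constant. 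You instead expand $w$ in spherical harmonics $w=\sum_j a_j r^j Y_j$ (legitimate since $w$ is entire harmonic) and plug the explicit formulas $\int_{\partial B_R} w^2 = R^{N-1}\sum_j a_j^2 R^{2j}$ and $\int_{B_R}\abs{\nabla w}^2 = R^{N-2}\sum_j j a_j^2 R^{2j}$ into the frequency bound, obtaining $\sum_j (j-2) a_j^2 R^{2j}\le 0$ for all $R$; sending $R\to\infty$ kills every $a_j$ with $j\ge 3$. This is shorter and reads off the degree directly, bypassing the detour through pointwise second-derivative bounds and Liouville. Your alternative reformulation via Almgren's frequency $N_w(R)\le 2$ is also correct, with the small caveat that for an entire harmonic function with infinitely many nonzero spherical-harmonic coefficients the frequency tends to $+\infty$ rather than converging to a finite ``top degree''; but that only strengthens the conclusion, since $N_w\le 2$ then rules out infinitely many nonzero modes as well. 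Your remark that $\int_{\partial B_1} w^2\dH{N-1}=1$ (so $w\not\equiv 0$) is indeed established in the paper but is not needed for the conclusion of this particular lemma.
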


\begin{proof} 
	The strategy of the proof of this lemma will be to use the first frequency estimate \eqref{eq:first_frequency_estimate} in order to obtain a doubling that allows us to deduce that $w$ has at most quadratic growth at infinity. Then a Liouville argument implies that $w$ is a polynomial of degree $\leq 2$.  
	\\
First of all note that since $w\Delta w=0$ in $\R^N$,
\begin{align} \label{eq:harmonic_functions_dirichtlet_functional_represention}
	\int \limits_{B_1} \abs {\nabla w}^2 = \int \limits_{\partial B_1} w~ \nabla w \cdot x  \dH{N-1}  - \int \limits_{B_1} \underbrace{w \Delta w }_{=0}  =  \int \limits_{\partial B_1} w ~ \nabla w \cdot x  \dH{N-1} .
\end{align}
Let us now define for each $R>0$
\begin{align}
	y(R):= \int \limits_{\partial B_1} z^2_R  \dH{N-1} \quad \text{ where} \quad z_R(x) := w(Rx) \text{ for all } x \in \R^N.
\end{align}
Then the derivative of $y(R)$ satisfies
\begin{align} \label{eq:differential_equation_for_y}
	\frac{\dx{}}{\dx{R}} y(R) &= \int \limits_{\partial B_1} 2 z_R ~ \partial_R z_R   \dH{N-1}
	= 2 \int \limits_{\partial B_1} w(Rx) ~ \nabla w(Rx) \cdot x  \dH{N-1} \\
	&= \frac{2}{R} \int \limits_{\partial B_1} z_R ~ \nabla z_R \cdot x  \dH{N-1}	= \frac{2}{R} \int \limits_{B_1} \abs {\nabla z_R}^2,
\end{align}
where we have used  \eqref{eq:harmonic_functions_dirichtlet_functional_represention} in the last step. In order to deduce a differential inequality, we use that $z_R$, too, satisfies the first frequency estimate, i.e.
\begin{align}
	 \frac{\int_{B_1} \abs {\nabla z_R}^2}{\int_{\partial B_1} z_R^2  \dH{N-1}} \leq 2 \quad \text{ for all } R>0.
\end{align}
This may be verified as follows: From Lemma \ref{lem:first_frequency_estimate} and \eqref{eq:frequency_is_independent_of_harmonic_quadratic_polynomials} we deduce that for each $R$ and each $r>0$
\begin{align}
	2 &\geq \frac{\int_{B_1} \abs {\nabla (\tilde{v}_{rR} -h_r)}^2}{\int_{\partial B_1} (\tilde{v}_{rR} -h_r)^2  \dH{N-1}} 
	= R^2 ~ \frac{\int_{B_1} \abs {\nabla w_r(Rx)   }^2 }{ \int_{\partial B_1}  w_r^2(Rx)  \dH{N-1}    }.
\end{align}
Now weak convergence of $w_r$ as $r \to \infty$ (recall \eqref{eq:weak_convergence_of_w_r}) and lower semicontinuity of the Dirichlet-functional as well as the compactness of the trace embedding imply that for each $R>0$,
\begin{align}
2 \geq\ R^2 ~ \frac{\int_{B_1} \abs{\nabla w(Rx) }^2}{\int_{\partial B_1} w^2(Rx)  \dH{N-1}} = \frac{\int_{B_1} \abs {\nabla z_R}^2 }{\int_{\partial B_1} z_R^2  \dH{N-1}}.
\end{align}
Combining this frequency estimate with \eqref{eq:differential_equation_for_y} we obtain the following differential inequality for $y$:
\begin{align}
	\frac{\dx{}}{\dx{R}}~ y(R) = \frac{2}{R} \int \limits_{B_1} \abs{\nabla z_R}^2 \leq \frac{4}{R} \int \limits_{\partial B_1} z_R^2  \dH{N-1}= \frac{4}{R} ~y(R).
\end{align}
Consequently, for all $R\geq 1$
\begin{align} \label{eq:growth_of_y}
	y(R) \leq y(1) ~ R^4.
\end{align}
A similar estimate holds for the volume integral:
For each $R \geq 1$,
\begin{align}
	\int \limits_{B_1} z_R^2 &= \int \limits_\frac{1}{R}^1 \int \limits_{\partial B_\rho} z_R^2(x) \dH{N-1}(x) \dx{\rho} + \int \limits_{B_\frac{1}{R}} z_R^2  = \int \limits_\frac{1}{R}^1 \int \limits_{\partial B_1} \rho^{N-1}z_R^2( \rho x)  \dH{N-1}(x) \dx{\rho} +R^{-N} \int \limits_{B_1} w^2   \\&=  \int \limits_\frac{1}{R}^1 \int \limits_{\partial B_1} \rho^{N-1}z_{R\rho}^2( x)  \dH{N-1}(x) \dx{\rho} + R^{-N} \int \limits_{B_1} w^2 \leq \int \limits_\frac{1}{R}^1 \rho^{N-1} (\rho R)^4 y(1) \dx{\rho} +R^{-N}\int \limits_{B_1} w^2  \\
	&= \frac{y(1)}{N+4} \bra {1 - \frac{1}{R^{N+4}}} R^4 + R^{-N}\int \limits_{B_1} w^2 
	\leq C_{1} R^4, \label{eq:z_R^2_bound_in_average}
\end{align}
where we have used that $\rho R \geq 1$ for $\rho \in \pra {\frac{1}{R}, 1}$, \eqref{eq:growth_of_y} and $w \in W^{1,2}_{\operatorname{loc}}(\R^N)$.
Note that up to this point the proof holds for all $N\ge 2$.

We are going to combine (\ref{eq:z_R^2_bound_in_average}) with the mean value property of harmonic functions in order to obtain  a uniform estimate on the second derivatives.

Let $x_0 \in B_\frac{1}{8}$. Then for all $i,j \in \set {1, \dots, N}$,
by the mean value property for harmonic functions
\begin{equation} \label{eq:d_ij_z_R_first}
\abs { \partial_{ij } z_R(x_0) } \leq C \sup \limits_{\partial B_\frac{1}{8}(x_0)} \abs { \partial_i z_R} .
\end{equation}
Similarly we compute for $x \in B_\frac{1}{4}(0)$ (note that for all $x_0 \in B_\frac{1}{8}(0)$, $B_\frac{1}{8}(x_0) \subset B_\frac{1}{4}(0)$),
\begin{align}\label{eq:d_ij_z_R_second}
	\abs {\partial_i z_R(x)} &
        \leq 
C_1	\sup \limits_{\partial B_\frac{1}{8}(x)} \abs { z_R} 
	\leq   C_2	\sup \limits_{B_\frac{3}{8}(0)} \abs {z_R} \\
	& = 
C_3	\sup \limits_{y \in B_\frac{3}{8}(0)} \left|  \fint \limits_{B_\frac{1}{8}(y)} z_R    \right|
	 \leq   	C_4 \int \limits_{B_1} \abs {z_R}
	 \leq  
	 C_5 \sqrt{ \int \limits_{B_1} z_R^2}.
\end{align}
Combining \eqref{eq:d_ij_z_R_first} and \eqref{eq:d_ij_z_R_second} and using \eqref{eq:z_R^2_bound_in_average} we obtain that
\begin{align}
	\norm { \partial_{ij} z_R  }_{L^\infty (B_\frac{1}{8})} \leq 
C_6	  \sqrt{ \int \limits_{B_1} z_R^2} \leq C_7 R^2,
\end{align}
for all $R\geq 1$. Recalling that $\partial_{ij} z_R(x) =R^2 \partial_{ij} w(Rx)$, it follows that
\begin{align}
	R^2 ~ \norm { \partial_{ij} w  }_{L^\infty(B_\frac{R}{8})} \leq C_7 R^2.
\end{align}
Thus we arrive at the desired uniform bound
\begin{align}
\norm {D^2 w}_{L^\infty (B_\frac{R}{8})} \leq C_{ 2 } \text{ for all } R \geq 1.
\end{align}
Consequently Liouville's theorem implies that $D^2 w$ (being harmonic) is constant. This tells us that $w$ is a harmonic polynomial of degree $\leq 2$.
\end{proof}
\begin{rem}
  We conjecture that for $N=2$, methods developed in the present paper can be used to show that
  $$w(r,\theta) = -\Vert r^{3/2} \sin(3\theta/2)\Vert_{L^2(\partial B_1)}^{-1} r^{3/2} \sin(3\theta/2).$$
  \end{rem}
In order to obtain a nontrivial estimate on the asymptotics of $u$ we need to exclude quadratic growth of $w$ which is done in the following lemma.
\begin{lem}[The limit $w$ is an affine linear function] \label{lem:w_is_linear}
Let $N \geq 3$ and let $w$ be as defined in \eqref{eq:weak_convergence_of_w_r}. Then $w$ is a nonzero polynomial of degree $\le 1$.
\end{lem}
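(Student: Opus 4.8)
The plan is to rule out the degree-$2$ part of the harmonic polynomial $w$ and simultaneously to show that $w\not\equiv 0$. That $w\not\equiv 0$ is essentially built into the normalization: since $w_r$ has $L^2(\partial B_1)$-norm equal to $1$ and the passage to the limit is along a sequence $r_k\to\infty$, the only danger of losing mass in the limit is that the $L^2(\partial B_1)$-norm escapes to infinity along the normalization. But Lemma \ref{lem:w_r_in_W_1_2_loc} and the doubling estimate \eqref{eq:doubling_bound} give that $\int_{\partial B_1}w_{r_k}^2\,\dH{N-1}=1$ together with a bound on $\int_{\partial B_R}w_{r_k}^2\,\dH{N-1}$, and the compactness of the trace embedding $W^{1,2}(B_1)\hookrightarrow L^2(\partial B_1)$ forces $\int_{\partial B_1}w^2\,\dH{N-1}=1$; hence $w\not\equiv 0$. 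So the heart of the lemma is the exclusion of a nontrivial quadratic term.

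To exclude quadratic growth I would exploit the structure of $v_r=u_r-p-h_r$ and the fact that the projection $h_r$ has already been subtracted. Write $w=\ell+q$ where $\ell$ is affine linear and $q$ is a homogeneous harmonic polynomial of degree $2$; the goal is $q\equiv 0$. First note $q$ can depend on $x_N$ in a nontrivial way only through terms of the form $x_N\,(x'\cdot a)$ or $x_N^2-\tfrac1{N-1}|x'|^2$ (after subtracting an $x'$-only part), because the part of $q$ depending only on $x'$ has been killed by the subtraction of $h_r$ in the definition of $v_r$: indeed $h_r$ is precisely the $L^2(\partial B_1)$-projection of $u_r-p$ onto $\cP_2'$, so the corresponding projection of $v_r$, and hence of $w$, onto $\cP_2'$ vanishes. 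Thus $q\in \cP_2\ominus \cP_2'$, i.e. $q$ genuinely involves $x_N$. Now I would use the one-sided sign information on $\partial_N u$ from Definition \ref{def:solution}: since $\partial_N u\le 0$ in $\R^N$ while $\partial_N p=0$ and $\partial_N h_r=0$, we get $\partial_N v_r=\partial_N u_r\le 0$, hence $\partial_N w_r\le 0$, and by weak convergence (distributional limit of a sign-definite sequence stays sign-definite, using $\partial_N w_{r_k}\rightharpoonup \partial_N w$ in $L^2_{\loc}$) we conclude $\partial_N w\le 0$ on all of $\R^N$. But a nonzero harmonic polynomial of degree $2$ has a derivative $\partial_N w$ that is affine linear, and an affine linear function is $\le 0$ everywhere only if it is a non-positive constant. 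A non-positive constant $\partial_N w$ means $w$ has the form $-c\,x_N+(\text{function of }x')$ with $c\ge0$; but the function-of-$x'$ part of degree $2$ was just shown to vanish, so the degree-$2$ part of $w$ is $0$. Hence $w=\ell$ is affine linear, and combined with $w\not\equiv0$ this is exactly the claim.

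There is one subtlety I would need to address carefully: passing the pointwise sign $\partial_N w_{r_k}\le 0$ to the limit. Since $w_{r_k}\rightharpoonup w$ weakly in $W^{1,2}_{\loc}(\R^N)$, we have $\nabla w_{r_k}\rightharpoonup \nabla w$ weakly in $L^2_{\loc}$, and weak $L^2$ limits of functions in the closed convex cone $\{g\le 0\}$ remain in that cone; so $\partial_N w\le 0$ a.e., and being continuous (indeed polynomial), everywhere. This step is clean. The place I expect to need the most care is the bookkeeping that the degree-$2$, $x'$-only component of $w$ is zero: one must check that weak $W^{1,2}_{\loc}$-convergence of $w_{r_k}$ together with strong $L^2(\partial B_1)$-convergence passes the projection identity $\Pi'(v_{r_k})|_{\partial B_1}=0$ to the limit, i.e. $\Pi'(w)=0$. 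Given the compact trace embedding $W^{1,2}(B_1)\hookrightarrow L^2(\partial B_1)$ already invoked in Lemma \ref{lem:w_r_in_W_1_2_loc}, this is routine: the $L^2(\partial B_1)$-inner product of $w_{r_k}$ against each fixed $q\in\cP_2'$ is $0$ and converges to that of $w$. The rest of the argument is soft and short; the main obstacle is simply to make sure we are not implicitly using boundedness of $w$ in all of $\R^N$ (which fails later in low dimensions) — but here we only use the growth bound $|w|\le C(1+|x|^2)$ from Lemma \ref{lem:quadratic_growth_of_w}, which is safe.
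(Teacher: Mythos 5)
Your proof is correct and follows essentially the same route as the paper's: both exclude the degree-$2$ part of $w$ by combining (a) the fact that $\partial_N w_{r_k}\le 0$ passes to $\partial_N w\le 0$, forcing $\partial_N w$ to be a non-positive constant and hence ruling out any $x_N$-dependent quadratic term, with (b) the identity $\Pi' w=0$ inherited from the definition of $v_r$, which kills the $x'$-only quadratic part; and both obtain $w\not\equiv 0$ from the normalization $\int_{\partial B_1}w_{r_k}^2\,\dH{N-1}=1$ together with compactness of the trace embedding. The only differences are cosmetic: you apply the projection step before the sign step whereas the paper reverses the order, and you spell out the weak-limit passage of the sign and projection constraints in more detail than the paper does.
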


\begin{proof}
	From Lemma \ref{lem:quadratic_growth_of_w} we already know that $w$ is a harmonic polynomial of degree $\leq 2$. Therefore we may write
	\begin{align}
		w = h + \ell + c,
	\end{align}
	where $h$ is a harmonic, homogeneous polynomial of degree $2$, 
	$\ell=  b \cdot x$ ($b \in \R^N$) is a linear function and $c \in \R$ is a constant. Note that the fact that $\int_{B_1} w^2 \dH{N-1}=1$ implies that $w$ is not the zero polynomial.
	We prove the claim of the lemma in two steps:\\
\textbf{Step 1.} \emph{$h$ is independent of $x_N$.}\\						
By results of 	\cite[Proof of Case 3 of Theorem II]{CaffarelliKarpShahgolian_Annals_2000} 
$\partial_N u $ does not change sign, and   by our assumption that  $p = p(x') $  and  $h_r$ are
 independent of $x_N$, $\partial_N w_r$ does not change sign either. Hence the limit
 $\partial_N w $ does not change sign.
But then $h$ cannot contain terms of the form $x_N^2$ or $x_j x_N$, $j \in \set {1, \dots, N-1}$. Hence $h$ is independent of $x_N$ as claimed.\\
\textbf{Step 2.} \emph{$h \equiv 0$.}\\
		By definition  \eqref{def:v_r} and \eqref{def:h_r}, $\Pi' w_r = 0 $ for all $r>0$, and in the limit $\Pi' w = 0 $, implying together with Step 1 that $h\equiv 0$ and that $w$ is a polynomial of degree $1$.\end{proof}

        \begin{rem}
          We will prove later that $\ell$ in the previous proof is not zero.
          \end{rem}


\section{First estimate on the growth of the coincidence set $\cC$} \label{section:first_estimate_on_the_coincidence_set}

We are now in a position to obtain a first estimate on the growth of $\cC$ as $x_N \to \infty$. 
\begin{prop}[First estimate on $\cC$] \label{lem:first_estimate_on_coincidence_set}
	\mbox{} \\
Let $N \geq 3$ and let $u$ be a solution in the sense of Definition \ref{def:solution}. Then for each $\delta \in (0,1)$ there is a number $a=a(\delta)\in (0,+\infty)$ such that
	\begin{enumerate}[i)]
			\item \label{item:growth_of_coincidence_set}
		\begin{align}
			\cC \cap \set {x_N >a} \subset \set {\abs{x'}^2 <x_N^{1+\delta}} \text{ and }
		\end{align}
			\item \label{item:boundedness of coincidence_set_around_zero}
	\begin{align} 
		\cC \cap \set {x_N \leq a} \text{ is bounded.}
		\end{align} 
		
	\end{enumerate}
\end{prop}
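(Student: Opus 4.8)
The plan is to exploit the previous section's result that, along a suitable sequence $r_k \to \infty$, the normalized differences $w_{r_k} = (u_{r_k} - p - h_{r_k})/\sqrt{\int_{\partial B_1} v_{r_k}^2}$ converge weakly to a nonzero affine linear function $\ell = b\cdot x + c$, together with the quadratic lower bound $p(x') \ge c_p |x'|^2$ from \eqref{eq:def_of_c_p}. The heuristic is that near a point $x = (x', x_N)$ with $|x'|^2$ large compared to a suitable power of $x_N$, the leading term $p(x')$ of $u$ already forces $u$ to be strictly positive, hence such a point cannot lie in $\cC$; one only has to quantify how large the error $u - p$ can be. The affine-linear (and in particular \emph{sublinear} in a scaled sense) behaviour of $w$ together with the normalization factor $\sqrt{\int_{\partial B_1} v_{r_k}^2}$ gives exactly the control needed to beat the quadratic growth of $p$, provided we allow ourselves the slack $x_N^{1+\delta}$ rather than $x_N$.

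For part \eqref{item:growth_of_coincidence_set}, I would argue by contradiction: suppose there is $\delta \in (0,1)$ and a sequence of coincidence points $\xi^j = ((\xi^j)', \xi^j_N) \in \cC$ with $\xi^j_N \to \infty$ and $|(\xi^j)'|^2 \ge (\xi^j_N)^{1+\delta}$. Set $r_j := |\xi^j|$ (which tends to $\infty$) and look at $\hat\xi^j := \xi^j / r_j \in \partial B_1 \cap \cC_{r_j}$, where $\cC_{r_j} = \{u_{r_j} = 0\}$. At such a point $u_{r_j}(\hat\xi^j) = 0$, so $v_{r_j}(\hat\xi^j) = -p(\hat\xi^j) - h_{r_j}(\hat\xi^j)$; dividing by the normalization, $w_{r_j}(\hat\xi^j) = -(p + h_{r_j})(\hat\xi^j)/\sqrt{\int_{\partial B_1}v_{r_j}^2}$. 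Now pass to a further subsequence so that $\hat\xi^j \to \hat\xi \in \partial B_1$ and use: (a) $w_{r_j} \to \ell$ with $\ell$ bounded on $\partial B_1$; (b) the uniform $W^{1,2}_{\mathrm{loc}}$ bound of Lemma~\ref{lem:w_r_in_W_1_2_loc} upgraded via the obstacle-problem regularity \eqref{bounded_second_derivative} (or by harmonicity of the limit and interior estimates) to give $C^{0}$-convergence $w_{r_j}(\hat\xi^j) \to \ell(\hat\xi)$; (c) $h_{r_j}$ is the $L^2(\partial B_1)$-projection of $u_{r_j} - p$ onto $\cP_2'$, and since $u_{r_j} - p = v_{r_j} + h_{r_j}$ with $v_{r_j}/\sqrt{\int_{\partial B_1}v_{r_j}^2} = w_{r_j}$ bounded, while $h_{r_j}/\sqrt{\int_{\partial B_1}v_{r_j}^2}$ is the projection of $w_{r_j}$ and hence also bounded and in fact convergent. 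The key quantitative input is that $\hat\xi^j$ satisfies $|(\hat\xi^j)'|^2 \ge (\xi^j_N)^{1+\delta}/r_j^2$; since $r_j^2 = |(\xi^j)'|^2 + (\xi^j_N)^2$ and $|(\xi^j)'|^2 \ge (\xi^j_N)^{1+\delta}$, one checks that this \emph{cannot} be reconciled with $\hat\xi^j$ staying in a region where $p(\hat\xi^j) + h_{r_j}(\hat\xi^j)$ is of size exactly $\sqrt{\int_{\partial B_1}v_{r_j}^2} \,\ell(\hat\xi)$ — more precisely, $\sqrt{\int_{\partial B_1}v_{r_j}^2} \to \infty$ is too slow, or the geometry of $\hat\xi^j$ forces $p(\hat\xi^j)$ to dominate. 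I would organize this by normalizing differently — rescaling not by $|\xi^j|$ but by $\xi^j_N$ (i.e. considering $u(\xi^j_N \,\cdot\,)/(\xi^j_N)^2$) — so that the assumed point has horizontal coordinate of size $\ge (\xi^j_N)^{(\delta-1)/2} \to 0$ but the offending direction is amplified, and derive the contradiction from the fact that $w$ is merely linear (degree $\le 1$) whereas $p$ is a positive-definite quadratic form.

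For part \eqref{item:boundedness of coincidence_set_around_zero}, once \eqref{item:growth_of_coincidence_set} is established the set $\cC \cap \{x_N \le a\}$ is a closed convex set (convexity of $\cC$) whose intersection with each slice $\{x_N = s\}$, $0 \le s \le a$, is bounded because $\cC \cap \{x_N = s\} \subset \{|x'|^2 \le s^{1+\delta}\}$ once $s > a$, and for $0 \le s \le a$ we use convexity together with Definition~\ref{def:solution}(ii), namely $\{0\} = \cC \cap \{x_N \le 0\}$: a point of $\cC$ in $\{0 \le x_N \le a\}$ lies on a segment joining $0 \in \cC$ to some point of $\cC$ at height $> a$, and such points form a bounded cone. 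Combined with the fact that $\cC \cap \{x_N \le 0\} = \{0\}$ is trivially bounded, convexity pins down $\cC \cap \{x_N \le a\}$ inside a bounded convex hull, proving ii).

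The main obstacle is step (b)/(c) above: converting the weak $W^{1,2}_{\mathrm{loc}}$ convergence $w_{r_j} \rightharpoonup \ell$ into pointwise control of $w_{r_j}(\hat\xi^j)$ at the \emph{moving} boundary points $\hat\xi^j$, and simultaneously controlling the projections $h_{r_j}$ relative to the normalization $\sqrt{\int_{\partial B_1} v_{r_j}^2}$. One must ensure the normalization factor does not degenerate or blow up in a way that destroys the comparison; this is where the frequency estimate \eqref{eq:first_frequency_estimate} and the doubling bound \eqref{eq:doubling_bound} re-enter, giving $\int_{\partial B_R} v_r^2 \le C(R)\int_{\partial B_1}v_r^2$ and hence uniform control of $w_r$ on every fixed ball, which upgrades to uniform $C^{1,\alpha}_{\mathrm{loc}}$ bounds on $u_r$ via \eqref{bounded_second_derivative} and lets us evaluate along $\hat\xi^j$.
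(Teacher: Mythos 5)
There is a genuine gap in part~\eqref{item:growth_of_coincidence_set}. Your contradiction argument ultimately requires an \emph{absolute} growth bound on the un-normalized error at scale $r$, namely an estimate of the form
\begin{align}
f(r) := \left( \int_{\partial B_1} \bigl(u(rx) - p(rx') - h_r(rx')\bigr)^2 \dH{N-1} \right)^{1/2} \le C(\delta)\, r^{1+\delta},
\end{align}
because at a coincidence point $\xi$ with $r \sim |\xi|$ you need to compare $p(\xi')$ (of size $\gtrsim |\xi'|^2$) against $u(\xi)-p(\xi')-h_r(\xi') = -p(\xi')-h_r(\xi')$, and this only yields $|\xi'|^2 \lesssim x_N^{1+\delta}$ if the $L^2$ error grows at most like $r^{1+\delta}$. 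Your proposal asserts that ``$\sqrt{\int_{\partial B_1}v_{r_j}^2}\to\infty$ is too slow'' but never proves it; and the doubling bound you invoke, \eqref{eq:doubling_bound}, is a fixed-$R$ spatial comparison $\int_{\partial B_R}v_r^2 \le C_5(R)\int_{\partial B_1}v_r^2$ with unquantified constant, which does not control how $f(r)$ grows in $r$. The paper's proof establishes precisely this growth bound as its central step, by deriving the scale doubling $f(2r) \le 2^{1+\delta}f(r)$ for large $r$, which crucially uses the linearity of the limit $w$ from Lemma~\ref{lem:w_is_linear} to bound $2^{1-N}\int_{\partial B_2}w^2 \le 4\int_{\partial B_1}w^2$, together with the minimizing property of $h_{2r}$ over $\cP_2'$. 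It then converts the $L^2$ bound into a pointwise one via the sup--mean-value inequality for the subharmonic function $\max\{p+h_r-u,0\}$, rather than attempting pointwise evaluation of weak $W^{1,2}_{\loc}$ limits at moving points on the free boundary (which, as you correctly flag, is delicate). Without the absolute growth bound on $f$, your contradiction argument cannot close. Your part~\eqref{item:boundedness of coincidence_set_around_zero} reasoning is essentially correct in spirit, though it can be stated more directly: any $\eta\in\cC$ with $0\le\eta_N\le a$ generates a ray $\eta+te^N\subset\cC$, whose intersection with $\{x_N=a+1\}$ must satisfy the bound from~\eqref{item:growth_of_coincidence_set}, giving $|\eta'|$ bounded; this matches the paper's appeal to Proposition~\ref{prop:known_results}~\eqref{prop:known_results_item_4}.
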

\begin{proof}
	\mbox{} 
		i) The main tool in proving this claim is a quantitative version of the doubling we have already used in \eqref{eq:doubling_bound}. In order to derive a nontrivial bound on the coincidence set $\cC$, our previous analysis on the asymptotic behavior of solutions ---i.e. that $w$ is affine linear--- is essential.
		Let us define the scaled function
		\begin{align} \label{def:v^r}
		\begin{split}
		v^r(x) &:= u^r(x) -p^r(x') -{h}^r_r(x') \\&= u(rx) -p(rx') - h_r(rx') =r^2 v_r(x)
		 \end{split}
		\end{align}
		and for all $r>0$
		\begin{align}
		f(r) := \sqrt {  \int \limits_{\partial B_1} \bra {v^r}^2  \dH{N-1}  }.
		\end{align}
Let us now note that the fact that $h_{2r} := \Pi' \tilde{v}_{2r}$ (cf.  \eqref{def:h_r}) implies that for all $r>0$
\begin{align}
	\int \limits_{\partial B_1} (\tilde{v}_{2r} -h_{2r})^2 \dH{N-1} \leq \int \limits_{\partial B_1} (\tilde{v}_{2r} - h_r)^2 \dH{N-1}.
\end{align}
	This observation together with	Lemma \ref{lem:w_is_linear} implies the following doubling
		\begin{align}
		\frac{\int_{\partial B_1} \bra {v^{2r}}^2  \dH{N-1}}{\int_{\partial B_1} \bra {v^{r}}^2  \dH{N-1}} &= \frac{\int_{\partial B_1} \pra { (2r)^2 ( \tilde{v}_{2r} - h_{2r} )  }^2 \dH{N-1}  }{ \int_{\partial B_1} (v^r)^2 \dH{N-1}   } \leq \frac{\int_{\partial B_1} \pra { (2r)^2 ( \tilde{v}_{2r} - h_{r} )  }^2 \dH{N-1}  }{ \int_{\partial B_1} (v^r)^2 \dH{N-1}   } 
		\\&= 2^{1-N} \frac{\int_{\partial B_2} (r^2 v_r)^2 \dH{N-1}}{\int_{\partial B_1} (r^2 v_r)^2 \dH{N-1}}  =	
		2^{1-N} \int \limits_{\partial B_2} \bra {w_r}^2  \dH{N-1} \\
		 &\to 
		 2^{1-N} \int \limits_{\partial B_2} w^2 \dH{N-1} 
	\leq 4 \int \limits_{\partial B_1} w^2  \dH{N-1} = 4 \\
		\end{align}
		as $r \to  \infty$.
		It follows that for all $\delta \in (0,1)$ there is $r_0(\delta)<+\infty$ such that for all $r> r_0(\delta)$, $f(2r) \leq 2^{1+\delta} f(r)$.
		Iterating this estimate we obtain  for all $k \in \N$ and $r > r_0(\delta)$
		\begin{align}
		f \bra {2^k r} \leq 2^{(1+\delta)k} f(r).
		\end{align}
	We deduce for all $k \in \N$ and all $r \in \pra {2^k r_0, 2^{k+1}r_0}$ that
		\begin{align}
		f(r) &\leq 2^{(1+\delta)k} \sup \limits_{\rho \in \pra {r_0, 2 r_0}} f(\rho) 
	  \leq C_1(r_0) \> r^{1+\delta} \sup \limits_{\rho \in \pra {r_0, 2 r_0}} f(\rho)=: C_2(\delta)\> r^{1+\delta}. \label{eq:L^2_growth}
		\end{align}
				This allows us to estimate the asymptotic thickness of the coincidence set $\cC$ as $x_N \to \infty$.  To this purpose we need to improve the estimate on the above  squared average  to a pointwise estimate. We do this using 
a sup-mean-value-inequality for subharmonic functions.
Before going into details let us remind the reader that due to the definition of $h_r$ in \eqref{def:h_r},
\begin{align}
	\norm {h_r}_{L^2(\partial B_1)} = \norm {  \Pi' (u_r-p)}_{L^2(\partial B_1)} \leq \norm {u_r-p}_{L^2(\partial B_1)} \to 0 \text{ as } r\to\infty.
\end{align}
Invoking that $\cP_2'$ is a finite dimensional vector space where all norms are equivalent, all coefficients of $h_r$ must vanish as $r \to \infty$. Since $p$ is non-degenerate in $x'$ we obtain that for all sufficiently large $r$,
				\begin{align} \label{eq:absorb_h_r_into_p}
		\abs {h_r(x')} \leq \frac{1}{2} p(x') \quad \text{ for all } x' \in \R^{N-1}.
		\end{align}
		
		Remembering from \eqref{def:v^r} that 
		\begin{align}
		-v^r(x)=p^r({x'})+h_r^r({x'})-u^r(x) = p(r{x'})+h_r(r{x'}) -u(rx),
		\end{align}
		we know that
		\begin{align}
		\max \set {p^r+{h}^r_r-u^r,0}
		\end{align}
		is a non-negative, subharmonic function, so that
by a sup-mean-value-property of {non-negative} subharmonic functions,
		\begin{align} \label{eq:sup_mean_for_estimate_growth_of_coincidence_set}
		 \sup \limits_{B_\frac{1}{2}}  \max \set { p^r +h_r^r -u^r,0 }\leq C(N) \sqrt{ \int \limits_{\partial B_1} \max \set { p^r +h_r^r -u^r,0 }^2  \dH{N-1} }. 
		\end{align}
		Let now $x \in \cC$ be such that $r:= 4 \abs {x}$ is sufficiently large.
		Combining \eqref{eq:L^2_growth}, \eqref{eq:absorb_h_r_into_p} and \eqref{eq:sup_mean_for_estimate_growth_of_coincidence_set} we obtain that
		\begin{align}
		C(N) C_{2}(\delta ) r^{1+\delta} &\geq \sup \limits_{B_\frac{r}{2}} \max \set { p+h_r-u,0  } \geq \max \set { p(x') +h_r(x') -u(x) ,0   } \\
		&=  \max \set {p(x') +h_r(x') ,0} \geq \max \set { \frac{1}{2} p(x') ,0   } = \frac{1}{2} p(x').
		\end{align} 
		This means that for  all sufficiently large $r$ and every $x \in \cC \cap \set {\abs {x} = \frac{r}{4}} $,
		\begin{align}
		\frac{c_p}{2} \abs {x'}^2 &\leq  \frac{1}{2} p(x')  \leq 4^{1+\delta} C(N) \> C(\delta) \> \abs {x}^{1+\delta} \leq  8^{1+\delta} C(N) \> C(\delta)  \bra {  \abs {x'}^{1+\delta} + \abs{x_N}^{1+\delta} },
		\end{align}
		where $c_p$ is  defined in \eqref{eq:def_of_c_p}.
		It follows that there is a constant $C<+\infty$ such that for sufficiently large $|x'|$,
		\begin{align}
		\abs{x'}^2 \leq C x_N^{1+\delta},
		\end{align}
		and we obtain \ref{item:growth_of_coincidence_set})
                choosing a slightly larger $\delta$ and choosing the number $a$ sufficiently large.\\
ii) follows from Proposition \ref{prop:known_results} \eqref{prop:known_results_item_4}.
\end{proof}
\section{{The Newtonian potential expansion of $u$}} \label{section:Newton_potential_expansion_of_u}

In this section we are going to use the growth estimate for the coincidence set in Proposition \ref{lem:first_estimate_on_coincidence_set} in order to show that the Newton-potential of the coincidence set $\cC$ is well-defined and has subquadratic growth. This allows us to do a Newton-potential expansion of the solution $u$. It is this Newton-potential expansion which will allow us to control the asymptotics of the solution up to a constant outside a small region around the coincidence set $\cC$.

\begin{lem}[Newtonian potential of $\cC$] \label{lem:existence_of_newton_potential}
	\mbox{} \\
Let $u$ be a solution in the sense of Definition \ref{def:solution} and let $N \geq 6$. Then
	\begin{enumerate}[i)]
		\item 
		The Newtonian potential $V_\cC$ of $\cC$ is well-defined and locally bounded.
		 \\
		\item \label{item:subquadratic_growth_of_Newton_potential_of_coincidence_set}
		$V_\cC(x)$ grows subquadratically as $\abs {x} \to \infty$, i.e. 
		\begin{align}
			\frac{V_\cC(x)}{|x|^2} \to 0 \text{ as } |x|\to\infty.
		\end{align}
	\end{enumerate}
\end{lem}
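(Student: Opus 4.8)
The plan is to exploit the growth estimate $\cC\cap\{x_N>a\}\subset\{|x'|^2<x_N^{1+\delta}\}$ from Proposition \ref{lem:first_estimate_on_coincidence_set} together with the boundedness of $\cC\cap\{x_N\le a\}$. First I would handle well-definedness: for a fixed $x\in\R^N$ we must show $V_\cC(x)=\alpha_N\int_\cC|x-y|^{2-N}\,dy<+\infty$. Split $\cC$ into the bounded piece $\cC\cap\{y_N\le a\}$ (whose contribution is finite because $|x-y|^{2-N}$ is locally integrable in $\R^N$ when $N\ge 3$, being the Newtonian kernel) and the ``tail'' $\cC\cap\{y_N>a\}$. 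On the tail, for $y_N$ large the constraint $|y'|^2<y_N^{1+\delta}$ means the slice at height $y_N$ has $(N-1)$-dimensional measure at most $C\,y_N^{(N-1)(1+\delta)/2}$. Hence for $|x-y|$ large (say $|x-y|$ comparable to $y_N$ once $y_N\gg|x|$) the integrand is $\lesssim y_N^{2-N}$ and, integrating the slices,
\begin{align}
\int_{\cC\cap\{y_N>\max(a,2|x|)\}}|x-y|^{2-N}\,dy \lesssim \int_{2|x|}^\infty y_N^{(N-1)(1+\delta)/2}\,y_N^{2-N}\,dy_N,
\end{align}
which converges provided the exponent $(N-1)(1+\delta)/2+2-N<-1$, i.e. $(N-1)(1+\delta)<2(N-1)$, i.e. $\delta<1$. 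Since $\delta\in(0,1)$ is at our disposal (choose it small, e.g. $\delta=\tfrac12$), the integral is finite, so $V_\cC(x)<+\infty$ for every $x$. For the intermediate range $a<y_N\le 2|x|$ one gets a finite bounded contribution by the same slicing plus local integrability of the kernel. Local boundedness of $V_\cC$ follows because the bounds just described are locally uniform in $x$: the tail estimate is uniform for $x$ in a compact set, and the near-diagonal part $\int_{B_1(x)}|x-y|^{2-N}\,dy$ is a fixed finite constant. Finally $\Delta V_\cC=-\chi_\cC$ holds distributionally: since $V_\cC$ is now a genuine locally integrable function solving the Newtonian potential identity (standard once finiteness and local integrability of the kernel are in place), this is the classical fact that the Newtonian potential of a bounded-density compactly-approximable set has distributional Laplacian $-\chi_\cC$; concretely, test against $\varphi\in C_c^\infty$, use Fubini (justified by the finiteness just proved on the support of $\varphi$ thickened slightly), and the identity $\Delta_x|x-y|^{2-N}=-N(N-2)|B_1|\,\delta_y/\alpha_N^{-1}$.

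For part ii), subquadratic growth, I would estimate $V_\cC(x)/|x|^2\to 0$ as $|x|\to\infty$ by again splitting the domain of integration relative to the point $x$. Write $R=|x|$. The bounded part $\cC\cap\{y_N\le a\}$ contributes $\lesssim R^{2-N}\to 0$ once $R$ is large, which is $o(R^2)$. For the tail, split into $\{y:|y-x|<R/2\}\cap\cC$ and $\{y:|y-x|\ge R/2\}\cap\cC$. On the far region $|x-y|^{2-N}\le (R/2)^{2-N}$; but this alone is not enough since $\cC$ has infinite measure, so instead integrate $\int_\cC |x-y|^{2-N}\,dy$ over $|x-y|\ge R/2$ using the slice bound: the slice at height $y_N$ has measure $\lesssim 1+y_N^{(N-1)(1+\delta)/2}$, and $|x-y|\gtrsim y_N$ for $y_N\gg R$, so this part is bounded by a constant independent of $R$ (the same convergent integral as before, but now the lower limit of integration can be taken near $y_N\sim R$, giving in fact a bound $\lesssim R^{(N-1)(1+\delta)/2+3-N}=o(1)$ when $\delta<1$), which is certainly $o(R^2)$. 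On the near region $|y-x|<R/2$, the set $\cC$ near $x$ is contained in a slab $\{|y'|^2<y_N^{1+\delta}\}$ with $y_N\sim R$, so $\cC\cap B_{R/2}(x)$ has $N$-dimensional measure $\lesssim R\cdot R^{(N-1)(1+\delta)/2}$, and
\begin{align}
\int_{\cC\cap B_{R/2}(x)}|x-y|^{2-N}\,dy \le \int_{B_{\rho}(x)}|x-y|^{2-N}\,dy = C\rho^2,
\end{align}
where $\rho$ is chosen so that $|B_\rho|$ equals the measure of $\cC\cap B_{R/2}(x)$ — this is the ``symmetrization'' bound using that the kernel is radially decreasing, so among sets of fixed volume the ball centered at $x$ maximizes the integral. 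Thus $\rho^N\lesssim R^{1+(N-1)(1+\delta)/2}$, hence $\rho\lesssim R^{(1+(N-1)(1+\delta)/2)/N}$ and this contribution is $\lesssim R^{2(1+(N-1)(1+\delta)/2)/N}$. For this to be $o(R^2)$ we need $2(1+(N-1)(1+\delta)/2)/N<2$, i.e. $1+(N-1)(1+\delta)/2<N$, i.e. $(N-1)(1+\delta)<2(N-1)$, i.e. $\delta<1$ — again satisfied. Combining the three pieces gives $V_\cC(x)=o(|x|^2)$.

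The main obstacle I anticipate is the near-diagonal estimate in part ii): one must be careful that $\cC\cap B_{R/2}(x)$ really is confined to a thin slab of the right thickness, which requires $x$ itself to be far from the ``vertex'' of the paraboloidal region (true once $|x|$ large, since $\cC\cap\{x_N\le a\}$ is bounded) and requires the slice bound to hold uniformly over the relevant range of heights $y_N\in(y_N^{\min},\,c R)$. The symmetrization step needs the kernel $|x-y|^{2-N}$ to be a decreasing function of the distance to $x$ — true for $N\ge 3$ — so that Riesz rearrangement (or the elementary layer-cake/bathtub argument) applies. The precise choice of $\delta$ (small, strictly below $1$) is what makes every exponent inequality go through; this is exactly where the dimension restriction $N\ge 6$ interacts with later arguments, though for this lemma $N\ge 3$ and $\delta<1$ suffice — I would double-check whether $N\ge 6$ is actually needed here or only inherited from the ambient standing assumptions, and state the lemma with the weakest hypothesis the proof uses.
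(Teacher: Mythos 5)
Your overall strategy is essentially the paper's: split $\cC$ into the bounded piece $\cC\cap\{y_N\le a\}$ and the thin tail, bound the tail by slicing using Proposition~\ref{lem:first_estimate_on_coincidence_set}, and for part ii) further split into near and far regions relative to $x$. The symmetrization (bathtub) estimate you use for the near-diagonal part in ii) is a legitimate substitute for the paper's device of enclosing that part in an explicit ball $B_{2x_N^{23/24}}(0,x_N)$; both give a subquadratic bound, and your near-region exponent computation does indeed only require $\delta<1$.

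However, there is a repeated algebra error that invalidates your final assessment. From $(N-1)(1+\delta)/2 + 2 - N < -1$ you should get $(N-1)(1+\delta) < 2(N-3)$, \emph{not} $(N-1)(1+\delta)<2(N-1)$; the latter cancels to $\delta<1$, but the correct inequality reads $\delta < (N-5)/(N-1)$. The same error appears when you claim $R^{(N-1)(1+\delta)/2+3-N}=o(1)$ ``when $\delta<1$'': the exponent is negative precisely when $(N-1)(1+\delta)<2(N-3)$ again. For any $\delta>0$ to exist satisfying $\delta<(N-5)/(N-1)$ one needs $N>5$, i.e.\ $N\ge 6$. (Check: $N=4$, $\delta=1/10$ gives exponent $3\cdot\tfrac{11}{20}+3-4=0.65>0$ — not $o(1)$, and the tail integral in part i) diverges.) So your concluding remark — ``for this lemma $N\ge 3$ and $\delta<1$ suffice'' — is wrong; the hypothesis $N\ge 6$ is genuinely used \emph{in this lemma} (to make the Newtonian potential of a paraboloidal region converge at all), not merely inherited from the standing assumptions. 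This is exactly why the paper hard-codes $\delta=1/10$, which satisfies $\delta<(N-5)/(N-1)$ only for $N\ge 6$.
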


\begin{proof}
	\mbox{}  
        To prove i), it suffices 	to check  that the Newtonian potential of $\cC \setminus B_R$ is well-defined and locally bounded for some $R>0$.
        Let $M<+\infty$, $\delta := 1/10$ and let $R$ be sufficiently large such that $\cC \setminus B_R \subset \set {y \in \R^N :  y_N >\max(a(\delta), 2M)} $, where $a(\delta)$ is the constant defined in Proposition \ref{lem:first_estimate_on_coincidence_set}. Then for 
$$
T_\delta := \left\{\abs{y'}^2 < y_N^{1+\delta} \right\}  \cap \big\{y_N > \max \set { a(\delta) , 2M} \big\}$$
and every $x$ such that $|x_N|\le M$,
	\begin{align}
		\int \limits_{\cC \setminus B_R} \frac{1}{ \abs {x-y}^{N-2} } \dx{y} &\leq 	
		\int \limits_{T_\delta } \frac{1}{ \abs {x-y}^{N-2} } \dx{y}
		\leq 	\int \limits_{T_\delta } \frac{1}{ \abs {x_N-y_N}^{N-2} } \dx{y} 
		\leq \int \limits_{a(\delta)}^\infty \frac{1}{\abs {\frac{y_N}{2}}^{N-2}} \abs {B'_{y_N^{(1+\delta)/2}}} \dx{y_N} \\&=  2^{N-2} \abs {B'_1} \int \limits_{a(\delta)}^{+\infty} y_N^{-N+2 + \frac{1+\delta}{2}(N-1)} \dx{y_N}.
			\end{align}
			The last integrand is integrable 
                        for $\delta := 1/10$ and $N \geq 6$. 
It follows that the Newtonian potential of $\cC \setminus B_R$ is well-defined and locally bounded.
			
			Next we prove statement ii).  
			 Let $\delta$ be as defined above, let $a(\delta)$ be the constant defined in Proposition \ref{lem:first_estimate_on_coincidence_set} and let $R<+\infty$ be such that $\cC \cap \set {y_N < a(\delta)} \subset B_R$.
Define 
\begin{align}
 &P_1:= \set {\abs {y'}^2 < y_N^{1 + \delta } }\cap \set {y_N < x_N -x_N^\frac{23}{24} 
 },
 P_2:= B_{2 x_N^{23/24}}(0,x_N)
 \\
\label{decomp}
 &\text{and }P_3:= \set {\abs {y'}^2 < y_N^{1 + \delta }}\cap \set { y_N > x_N +x_N^\frac{23}{24}  } .
\end{align}
		Then for $x_N$ large enough,
			\begin{align} \label{eq:overestimation_coincidence_set_for_subquadratic_growth}
				\cC \subset B_R \cup P_1 \cup P_2 \cup   P_3,
			\end{align}
which in turn implies that
			\begin{align}
			V_\cC(x)  &\leq V_{B_R}(x) + V_{P_1} (x)+ V_{P_2}(x) + V_{P_3}(x) \leq V_{B_R}(x) + V_{P_1} ((0,x_N)) + V_{P_2}((0,x_N))  + V_{P_3}((0,x_N))  .
			\end{align}
			For fixed  $R$,
			$	V_{B_R}(x) \to 0 $  as $ \abs {x} \to \infty$. 
Furthermore
\begin{align}
\frac{1}{\alpha_N}V_{P_1}((0,x_N)) &\leq  \int \limits_{P_1} \frac{1}{ \abs {x_N-y_N}^{N-2} } \dx{y}
\leq 
\bra {x_N^\frac{23}{24} }^{2-N} \abs {B_1'} \int \limits_0^{x_N} y_N^{\frac{1+\delta}{2}(N-1)} \dx{y_N} \\
&=  \abs {B_1'} \frac{1}{\frac{11}{20} (N-1)+1}~  x_N^{\frac{23}{24}(2-N) + \frac{11}{20} (N-1)+1} \quad \to \ 0,
\end{align}
as $x_N \to \infty$ due to the assumption $N \geq 6$. Next,
			
\begin{align}
				V_{P_2} (0,x_N) &= V_{B_{2 x_N^{23/24}}(0,x_N)}((0,x_N)) = \alpha_N \int \limits_{ B_{2 x_N^{{23/24}}  }} \frac{1}{ \abs {y}^{N-2}  } \dx{y} 
				\\ &= 
				\alpha_N \int \limits_0^{2 x_N^{23/24}}  \rho^{2-N} \abs {\partial B_1} \rho^{N-1} \dx{\rho} = 2  \alpha_N  \abs {\partial B_1}  x_N^\frac{23}{12}
			\end{align}
			which has subquadratic growth. Finally,
			\begin{align}
				V_{P_3}(0,x_N) &\leq \alpha_N \int \limits_{\set {\abs { y' }^2 < y_N^{1+\delta} \land y_N > x_N +  x_N^{23/24}}} \frac{1}{\abs {x_N-y_N}^{N-2}}   \dx{y}\\
				&=  \int \limits_{x_N^{23/24}}^{x_N}y_N^{2-N} |B_1'| \bra { y_N +x_N  }^{(N-1)  \frac{1+\delta}{2}  } \dx{y_N}    + \int \limits_{x_N}^{+\infty} y_N^{2-N} |B_1'| (y_N+x_N)^{(N-1) \frac{1+\delta}{2} }   \dx{y_N} \\
				&\leq   \int \limits_{x_N^{23/24}}^{x_N}y_N^{2-N} |B_1'|\bra { 2 x_N  }^{(N-1)  \frac{1+\delta}{2}  } \dx{y_N}    + \int \limits_{x_N}^{+\infty} y_N^{2-N} |B_1'|(2y_N)^{(N-1) \frac{1+\delta}{2} }   \dx{y_N}   \\
				&= |B_1'|(2x_N)^{(N-1) \frac{1+\delta}{2}  } ~\frac{x_N^{(3-N) \frac{23}{24}   }- x_N^{3-N}   }{N-3}     + |B_1'|\frac{2^{\frac{1+\delta}{2}(N-1) +1 }}{N-5-\delta(N-1)}  x_N^\frac{5-N+\delta(N-1)}{2}   \\
				&\leq C \> x_N^{-\frac{1}{8}}   ,
		\end{align}
		 for some constant $C<+\infty$ and $x_N$ large enough, due to the assumption $N \geq 6$. 
	 This tells us that the growth of the Newtonian potential is dominated by the part $P_2$. Thus we have established the subquadratic growth of the Newtonian potential of $\cC$ as $\abs {x} \to \infty$.
\end{proof}
	
\begin{prop}[Newtonian potential expansion]
Let $u$ be a solution in the sense of Definition \ref{def:solution} and let $N \geq 6$.  Then the expansion
	\begin{align}
		u = p +\ell +c +V_\cC
	\end{align}
	holds, where $p$ is the quadratic polynomial in Definition \ref{def:solution} \eqref{PDE_asymptotics}, $\ell$ is a linear function such that $\partial_N \ell <0$, and $c$ is a constant.
\end{prop}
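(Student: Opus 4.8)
The plan is to show that $g:=u-p-V_\cC$ is a harmonic function on $\R^N$ with strictly subquadratic growth, hence affine linear, and then to fix the sign of its $x_N$--derivative by combining the monotonicity $\partial_N u\le 0$ with an explicit computation of $\partial_N V_\cC$ below the coincidence set. The genuinely substantial input --- the subquadratic growth of $V_\cC$ --- is already available from Lemma~\ref{lem:existence_of_newton_potential}; what remains is short.

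First I would check that $g$ is harmonic in the sense of distributions. One has $\Delta u=\chi_{\{u>0\}}$ by assumption, $\Delta p=2\tr Q=1$ since $p(x')=(x')^TQx'$ is independent of $x_N$ and $\tr Q=\tfrac12$, and $\Delta V_\cC=-\chi_\cC=-\chi_{\{u=0\}}$ because $V_\cC$ is well-defined in the sense of Definition~\ref{NP} by Lemma~\ref{lem:existence_of_newton_potential}~i). Since $u\ge 0$ everywhere, $\chi_{\{u>0\}}+\chi_{\{u=0\}}\equiv 1$, so $\Delta g=\chi_{\{u>0\}}-1+\chi_{\{u=0\}}=0$, and Weyl's lemma gives $g\in C^\infty(\R^N)$ harmonic.

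Next I would establish strictly subquadratic growth of $g$ and run a Liouville argument. By Lemma~\ref{lem:existence_of_newton_potential}~ii), $V_\cC(x)/|x|^2\to 0$; and since $p$ is homogeneous of degree $2$, the convergence $u_r\to p$ in $C^{1,\alpha}_{\operatorname{loc}}$ from Definition~\ref{def:solution}~\eqref{PDE_asymptotics} rescales to $\sup_{B_\rho}|u-p|=\rho^2\sup_{B_1}|u_\rho-p|=o(\rho^2)$. Hence $\sup_{B_\rho}|g|\le\sup_{B_\rho}|u-p|+\sup_{B_\rho}V_\cC=o(\rho^2)$. Applying the interior estimate for second derivatives of harmonic functions, $|D^2g(x)|\le C_N R^{-2}\sup_{B_{R+|x|}}|g|=C_N R^{-2}o(R^2)\to 0$ as $R\to\infty$ for each fixed $x$, so $D^2g\equiv 0$; this is the Liouville-type reasoning already used in the proof of Lemma~\ref{lem:quadratic_growth_of_w}, now with the stronger bound $\sup_{B_\rho}|g|=o(\rho^2)$ forcing degree $\le 1$. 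Therefore $g=\ell+c$ with $\ell$ linear and $c\in\R$, which is the claimed expansion $u=p+\ell+c+V_\cC$.

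Finally I would prove $\partial_N\ell<0$. Since $\partial_N p=0$, the constant $\partial_N\ell$ equals $\partial_N u(x)-\partial_N V_\cC(x)$ at every $x$. Pick any $x$ with $x_N<0$. By Definition~\ref{def:solution}~(ii) one has $\cC\cap\{x_N\le 0\}=\{0\}$, so $y_N\ge 0$ for every $y\in\cC$ and $\cC$ has positive distance from a neighbourhood of $x$; there $V_\cC$ is harmonic, and a computation analogous to the proof of Lemma~\ref{lem:existence_of_newton_potential}~i) (with exponent $N-1$ in place of $N-2$) justifies differentiation under the integral, giving
\[
\partial_N V_\cC(x)=-(N-2)\,\alpha_N\int_\cC\frac{x_N-y_N}{|x-y|^N}\dx y>0,
\]
because $x_N-y_N\le x_N<0$ throughout $\cC$ and $\cC$ has positive Lebesgue measure (it has non-empty interior). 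On the other hand $\partial_N u(x)\le 0$ by the monotonicity arranged in Section~\ref{sec:reduction}. Hence $\partial_N\ell=\partial_N u(x)-\partial_N V_\cC(x)<0$, which in particular shows $\ell\not\equiv 0$. The only step that needs a little care is precisely this differentiation under the integral off $\cC$; everything else is routine once the subquadratic growth of $V_\cC$ --- the real work, done in Lemma~\ref{lem:existence_of_newton_potential} --- is granted.
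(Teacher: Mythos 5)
Your proof is correct, and the main body --- showing $g:=u-p-V_\cC$ is harmonic, then subquadratic by Lemma~\ref{lem:existence_of_newton_potential}~ii) together with $\sup_{B_\rho}|u-p|=o(\rho^2)$, and hence affine by Liouville --- is exactly the paper's argument. The difference is in how you obtain $\partial_N\ell<0$. The paper does not differentiate $V_\cC$ at all: it takes $x^1=-e^N$, observes that $|y|<|x^1-y|$ for all $y\in\cC$ (using $\cC\subset\{x_N\geq 0\}$), so $V_\cC(x^1)<V_\cC(0)$, and then feeds this into the expansion evaluated at $x^1$ and at $0$: since $u(0)=0$ gives $c+V_\cC(0)=0$, one gets $0<u(x^1)=-\ell(e^N)+c+V_\cC(x^1)<-\ell(e^N)$. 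Your route instead uses $\partial_N\ell=\partial_N u-\partial_N V_\cC$ at a point with $x_N<0$, combining $\partial_N u\leq 0$ from Section~\ref{sec:reduction} with $\partial_N V_\cC>0$ obtained by differentiating under the integral. That is correct, but it costs you an extra integrability check (the kernel $|x-y|^{1-N}$ rather than $|x-y|^{2-N}$, which is fine for $N\geq 6$ by the same Proposition~\ref{lem:first_estimate_on_coincidence_set} estimate, just with a different exponent), whereas the paper's comparison argument needs only the already-established finiteness of $V_\cC$. Incidentally, your argument also directly uses the sign of $\partial_N u$, which the paper established but does not need here; the paper's version uses only $u\geq 0$ and $0\in\cC$.
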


\begin{proof}
It is well known that $V_\cC$ is a strong solution in $W^{2,p}_{\operatorname{loc}}(\R^N)$ of
\begin{align}
	\Delta V_\cC = - \chi_\cC \text{ in } \R^N.
\end{align}
Let us furthermore set 
\begin{align}
	v := u-p \quad \text{ in } \R^N.
\end{align}
Then $v$ solves the same equation as $V_\cC$, i.e. $v\in W^{2,p}_{\operatorname{loc}}(\R^N)$ is a strong solution of
\begin{align}
\Delta v = - \chi_\cC \text{ in } \R^N.
\end{align}
Hence $v-V_\cC$ is harmonic in $\R^N$, and from Definition \ref{def:solution} \eqref{PDE_asymptotics} and Lemma \ref{lem:existence_of_newton_potential}
we know that $v-V_\cC$ has subquadratic growth. This allows us to apply Liouville's theorem
to obtain that
\begin{align}
	v-V_\cC = \ell +c,
\end{align}
where $\ell$ is a linear function and $c$ is a constant. Thus we have proved 
\begin{align} \label{eq:Newton_potential_expansion_of_u}
	u = p + \ell + c + V_\cC\quad \text{ in } \R^N.
\end{align}
What remains to be shown is that
\begin{align} \label{eq:ell_has_sign}
	 \partial_N \ell <0. 
\end{align}
Since $0 \in \cC$ (cf. Definition \ref{def:solution}) let $x^1:=-e^N$.
It follows that $\abs{y} < \abs {x^1-y}$  and
\begin{align}
	 \frac{1}{\abs {x^1-y}^{N-2}} < \frac{1}{\abs {y}^{N-2}}\quad \text{ for all } y \in \cC.
\end{align}
Consequently 
 $V_\cC(x^1) < V_\cC({0})$.
Employing the Newtonian potential expansion \eqref{eq:Newton_potential_expansion_of_u}, we obtain that 
\begin{align}
	0<u(x^1) &= \ell(x^1) +c +V_\cC(x^1) < -\ell(e^N) +c +V_\cC(0) = u(0) - \ell(e^N) =-\ell(e^N).
\end{align}
\end{proof}
\section{Existence of suitable paraboloid solutions} \label{section:existence_of_paraboloid_solutions}

While it is not difficult to show that each paraboloid gives rise to {\em some} solution of the obstacle problem (e.g. using a sequence of ellipsoids converging to the given paraboloid) it is a different matter altogether to prove that, given $p$ and $\ell$, there exists a solution of the obstacle problem with a paraboloid as coincidence set that has precisely $p+\ell$ as asymptotic limit at infinity.
The following result showing this existence is related to the homeomorphism (mapping the ellipsoids onto the class of quadratic polynomials describing the asymptotic behavior of the solution at infinity) constructed in \cite[Proof of (5.4)]{DiBenedettoFriedman} in the case of compact coincidence set.

\begin{thm}[Existence of paraboloid solutions with prescribed linear part] \label{prop:existence_of_paraboloid_solutions} 
	\mbox{} \\
	Let $N   \geq 6$.
	For each $(b_1,\dots,b_{N+1}) \in (0,\infty)^{N} \times \R$ such that $\sum_{j=1}^{N-1} b_j = \tfrac{1}{2}$ there is $(a_1,\dots, a_N) \in (0,+\infty)^ {N-1}  \times  \R$ such that
	\begin{align}
		V_{P_{\bf a}}(x)= - \sum \limits_{j = 1}^{N-1} {b_j}{x_j^2} + b_N x_N + b_{N+1} \quad \text{ in } P_{\bf a},
	\end{align}
	where 
	\begin{align}
		P_{\bf a} &:= \set { (x',x_N) \in \R^N : x_N\ge -a_N, x' \in \sqrt{x_N + a_N} E'_{\bf a'}   } \\
		  E'_{\bf a'} & := \set {  x' \in \R^{N-1} : \sum \limits_{j=1}^{N-1} \frac{x_j^2}{a_j^2} \leq 1  }.
	\end{align}
	Furthermore 
	\begin{align}
		u_{P_{\bf a}}(x) := p_{\bf b}(x') - b_N x_N -b_{N+1} +V_{P_{\bf a}}(x)
	\end{align}
	solves
	\begin{align}
 u_{P_{\bf a}} \geq 0 \text{ in } \R^N \quad ,\quad \Delta u_{P_{\bf a}} = \chi_{ \set {u_{P_{\bf a}} >0} } \text{ in } \R^N \quad , \quad \set {u_{P_{\bf a}} =0} = P_{\bf a} \\\quad \text{ and } \quad \frac{u_{P_{\bf a}}(rx)}{r^2} \to p_{\bf b}(x') ~\text{ uniformly on } \partial B_1 \text{ as } r \to \infty,
 	\end{align}
 	where $p_{\bf b}(x') := \sum_{j=1}^{N-1} b_j x_j^2 $ and $E'_{\bf a'}$ is the (up to scaling) \emph{unique} ellipsoid corresponding to the polynomial $p_{\bf b}(x')$ in the sense that there is $\lambda >0$ such that
 	\begin{align}
 		V'_{ \lambda E'_{\bf a'}}(x') = 1- p_{\bf b}(x') \quad \text{ for all  } x' \in \lambda E'_{\bf a'}. 
 \end{align}
\end{thm}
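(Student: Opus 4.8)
The plan is to split Theorem~\ref{prop:existence_of_paraboloid_solutions} into two logically independent parts: (a) construction of the ellipsoid $E'_{\bf a'}$ associated to $p_{\bf b}$ in the lower-dimensional problem, and (b) a ``thickening'' procedure that produces from such an ellipsoid a genuine paraboloid $P_{\bf a}$ carrying a Newtonian potential of the prescribed form. First I would recall the known classification of compact coincidence sets (Dive--DiBenedetto--Friedman, or the concise treatment in \cite{ellipsoid}): for every positive-definite quadratic polynomial $p_{\bf b}(x')$ on $\R^{N-1}$ with $\sum b_j = \tfrac12$ there is, up to scaling, a unique ellipsoid $E'_{\bf a'}\subset\R^{N-1}$ with $V'_{\lambda E'_{\bf a'}}(x') = 1 - p_{\bf b}(x')$ inside $\lambda E'_{\bf a'}$ for a suitable $\lambda>0$. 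This is exactly the classical fact that the Newtonian potential of an ellipsoid is a quadratic polynomial in its interior, combined with the homeomorphism between semi-axes and quadratic forms built in \cite[Proof of (5.4)]{DiBenedettoFriedman}; the trace condition $\tr Q = \tfrac12$ is precisely what makes the Laplacian come out to $-1$. This settles the uniqueness claim.

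The heart of the proof is (b). Here I would use the ``slab-stacking'' idea: integrate the ellipsoid potential along the $x_N$-axis. Concretely, fix the $(N-1)$-dimensional ellipsoids $E'_{\bf a'}(t) := \sqrt{t}\,E'_{\bf a'}$ for $t\ge 0$ and consider $P_{\bf a} = \{(x',x_N): x_N \ge -a_N,\ x'\in\sqrt{x_N+a_N}\,E'_{\bf a'}\}$, i.e. the union of these scaled ellipsoids, shifted so that the vertex sits at $x_N=-a_N$. A direct slicing computation, $V_{P_{\bf a}}(x) = \alpha_N\int_{-a_N}^{\infty}\!\int_{\sqrt{t+a_N}E'_{\bf a'}} |x-(y',t)|^{2-N}\,dy'\,dt$, lets one differentiate twice in $x_N$ and $x_j$; using that each slice's $(N-1)$-dimensional potential is the quadratic $1-p_{\bf b}$ there (after rescaling $\sqrt{t+a_N}$), one checks that $V_{P_{\bf a}}$ restricted to $P_{\bf a}$ is itself a quadratic polynomial of the form $-\sum b_j x_j^2 + \beta_N(\mathbf a) x_N + \gamma(\mathbf a)$ — no mixed or $x_N^2$ terms survive because of the translation invariance in $x'$ within each slice and the scaling relation. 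Well-definedness and local boundedness of $V_{P_{\bf a}}$ follow from the growth estimate already established in Lemma~\ref{lem:existence_of_newton_potential} (the paraboloid $P_{\bf a}$ satisfies exactly the containment hypothesis there with $\delta = 0$), which needs $N\ge 6$. The remaining task is to show that the two free parameters — the overall scale $\lambda$ of $E'_{\bf a'}$ (equivalently a scale on $\mathbf a'$) and the vertex offset $a_N$ — can be tuned so that $\beta_N(\mathbf a) = b_N$ and $\gamma(\mathbf a) = b_{N+1}$. I expect this to reduce to a monotonicity/continuity argument: $a_N$ controls a translation, hence shifts the linear coefficient $\beta_N$ monotonically and covers all of $\R$, while an overall dilation of $P_{\bf a}$ rescales $\gamma$; an intermediate-value / degree argument on the resulting continuous map $(\lambda,a_N)\mapsto(\beta_N,\gamma)$ closes it.

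Finally, with $V_{P_{\bf a}}$ pinned down, set $u_{P_{\bf a}} := p_{\bf b}(x') - b_N x_N - b_{N+1} + V_{P_{\bf a}}$. By construction $\Delta u_{P_{\bf a}} = \tfrac12 - 0 - 0 - \chi_{P_{\bf a}} = 1 - \chi_{P_{\bf a}}$... wait: one must be careful — $\Delta p_{\bf b} = 2\sum b_j = 1$ and $\Delta V_{P_{\bf a}} = -\chi_{P_{\bf a}}$, so $\Delta u_{P_{\bf a}} = 1 - \chi_{P_{\bf a}} = \chi_{\R^N\setminus P_{\bf a}}$, and inside $P_{\bf a}$ the two quadratics cancel by the defining identity so $u_{P_{\bf a}} \equiv 0$ there; hence $\{u_{P_{\bf a}}=0\} \supseteq P_{\bf a}$ and $\Delta u_{P_{\bf a}} = \chi_{\{u_{P_{\bf a}}>0\}}$ provided we also show $u_{P_{\bf a}}>0$ off $P_{\bf a}$ and $u_{P_{\bf a}}\ge 0$. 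Nonnegativity and strict positivity outside $P_{\bf a}$ I would obtain by a maximum-principle / barrier argument: $u_{P_{\bf a}}$ vanishes on $\partial P_{\bf a}$ together with its gradient (the potential is $C^{1,1}$ and the quadratics match to first order on the boundary because the ellipsoid identity holds with equality on the boundary slice), $\Delta u_{P_{\bf a}} = 1 > 0$ on the complement, and the growth at infinity is governed by $p_{\bf b}\ge c_p|x'|^2$ minus the subquadratic $V_{P_{\bf a}}$, so $u_{P_{\bf a}}\to+\infty$ away from $P_{\bf a}$; comparison with half-space solutions along rays then forces positivity. The asymptotics $u_{P_{\bf a}}(rx)/r^2 \to p_{\bf b}(x')$ on $\partial B_1$ is immediate from subquadratic growth of $V_{P_{\bf a}}$ (Lemma~\ref{lem:existence_of_newton_potential}\,\ref{item:subquadratic_growth_of_Newton_potential_of_coincidence_set}) and linearity of $-b_Nx_N-b_{N+1}$. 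The main obstacle, as indicated, is step (b): simultaneously keeping the potential quadratic \emph{and} matching the prescribed linear coefficient $b_N$ and constant $b_{N+1}$ via the surrogate parameters $(\lambda, a_N)$ — i.e. verifying the relevant map is onto.
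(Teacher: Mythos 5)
The central difficulty in this theorem is to establish that the Newtonian potential of a paraboloid $P_{\bf a}$ is a quadratic polynomial inside $P_{\bf a}$ with the prescribed coefficients, and your ``slab-stacking'' argument for this is where the proposal breaks. You write the potential as
$V_{P_{\bf a}}(x) = \alpha_N\int_{-a_N}^{\infty}\int_{\sqrt{t+a_N}E'_{\bf a'}} |x-(y',t)|^{2-N}\,dy'\,dt$
and then invoke that ``each slice's $(N-1)$-dimensional potential is the quadratic $1-p_{\bf b}$.'' But the inner integral has kernel $\bigl(|x'-y'|^2 + (x_N - t)^2\bigr)^{(2-N)/2}$, whereas the $(N-1)$-dimensional Newtonian potential of the slice has kernel $|x'-y'|^{3-N}$; these are genuinely different objects, with different singularity exponents, and the latter simply does not appear anywhere in the former. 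So the classical quadratic-potential property of $(N-1)$-dimensional ellipsoids cannot be fed into the slicing integral the way you propose, and the claim that ``no mixed or $x_N^2$ terms survive'' is unsupported. The later step, where you conclude $u_{P_{\bf a}}\equiv 0$ inside $P_{\bf a}$ ``by the defining identity,'' is therefore circular: that identity is precisely the assertion $V_{P_{\bf a}} = -p_{\bf b}+b_Nx_N+b_{N+1}$ on $P_{\bf a}$ that the slicing was supposed to establish. Because your whole route rests on this step, the gap is fatal to the proposal as written, even though the targeted conclusion (quadratic interior potential of a paraboloid in $N\ge 6$) is true.

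The paper takes a different and safer route. It never computes $V_{P_{\bf a}}$ directly. Instead, using Lemma \ref{lem:existence_of_suitable_ellipsoids} it builds a sequence of full $N$-dimensional ellipsoids $E^n$ whose interior potentials are quadratic, with the $x_N^2$-coefficient vanishing like $1/n^2$ and a translation $\tau_n\sim n^2$ chosen so that the induced linear term is exactly $b_N x_N$. The associated obstacle solutions $u_n = q^n - b_N x_N + V_{E^n}$ are all trapped in a fixed paraboloid $\tilde P$, so $L^\infty_{\loc}$/$W^{2,p}_{\loc}$ compactness and dominated convergence let one pass to the limit and obtain a genuine global obstacle solution $u = p_{\bf b}-b_Nx_N+V_M$; the quadratic interior potential of the limit set $M$ then comes for free from $u\equiv 0$ on $M$. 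The nontrivial labor is instead in showing that the semi-axes $(B_{j,n},\tau_n)$ neither collapse nor escape (so that $M$ is a nondegenerate paraboloid), and in the moving-center blow-down of Step 4, invoking Lemma \ref{lem:preservation_of_blow_down}, to identify the sectional ellipsoid. By contrast your parameter-tuning argument is merely sketched: you would need to exhibit an initial paraboloid with nonzero linear coefficient before scaling/translating, and you would need to handle the fact that dilation changes the cross-sectional ellipsoid and not only $b_N,b_{N+1}$. I would recommend abandoning the slicing heuristic and following the limit-of-ellipsoids construction.
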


The proof is based on the following Lemma which is a consequence  of the analysis of the Newtonian potential of ellipsoids carried out in \cite{DiBenedettoFriedman}.

\begin{lem}[Existence of suitable ellipsoids]\label{lem:existence_of_suitable_ellipsoids}
		\mbox{} \\
Let $N \geq 3$. Then for each non-degenerate, symmetric, homogeneous quadratic polynomial $q(x) := \sum_{j=1}^N q_j x_j^2$ with $q_j >0$ for all $j \in \set {1, \dots, N}$ and $ \sum_{j=1}^N q_j =\tfrac{1}{2}$ and each constant $c>0$, there exists a unique ellipsoid, centered at the origin, $$E= \set {  x \in \R^{N} : \sum_{j=1}^{N} \frac{x_j^2}{a_j^2} \leq 1  } $$  such that
\begin{align}
	V_E(x) = c- q(x) \quad \text{ for all } x \in E.
\end{align}
\end{lem}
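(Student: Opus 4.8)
The plan is to prove Lemma~\ref{lem:existence_of_suitable_ellipsoids} by extracting the relevant facts from the classical analysis of the Newtonian potential of ellipsoids in \cite{DiBenedettoFriedman}, and then running a degree‐theoretic/continuity argument to match the prescribed polynomial. First I would recall the explicit formula for the Newtonian potential of a solid ellipsoid $E=E_{\bf a}$ inside $E$: there exist numbers $A_0({\bf a})>0$ and $A_j({\bf a})>0$ ($j=1,\dots,N$), given by the standard elliptic integrals
\begin{align}
A_j({\bf a}) = \frac{\alpha_N N(N-2)|B_1|}{2}\,a_1\cdots a_N \int_0^\infty \frac{ds}{(a_j^2+s)\prod_{k=1}^N (a_k^2+s)^{1/2}},
\end{align}
and a similar formula for $A_0$, such that
\begin{align}
V_E(x) = A_0({\bf a}) - \sum_{j=1}^N A_j({\bf a})\,x_j^2 \qquad \text{for all } x\in E.
\end{align}
(That the potential is a quadratic polynomial inside the ellipsoid is Newton/Ivory/Dive; the coefficients and their positivity are in \cite{DiBenedettoFriedman}.) The identity $\Delta V_E = -\chi_E$ inside $E$ forces the trace constraint $\sum_{j=1}^N A_j({\bf a}) = \tfrac12$, so the coefficient map automatically lands in the correct hyperplane. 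Then the statement to prove is exactly that the map $\Phi:{\bf a}\mapsto (A_1({\bf a}),\dots,A_N({\bf a}),A_0({\bf a}))$, restricted to the relevant scaling class, is a bijection onto the set of admissible $(q_1,\dots,q_N,c)$.

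Next I would mod out the scaling invariance. Under ${\bf a}\mapsto t{\bf a}$ one has $A_j(t{\bf a}) = A_j({\bf a})$ for $j\ge 1$ (each $A_j$ is scale‑invariant — substitute $s\mapsto t^2 s$ in the integral) while $A_0(t{\bf a}) = t^2 A_0({\bf a})$. Hence the $q$‑part depends only on the \emph{shape} (the ray $\R_{>0}{\bf a}$ in $(0,\infty)^N$), and once a shape is fixed the single constant $c>0$ is matched by choosing the dilation factor $t$, since $t\mapsto t^2 A_0$ is a bijection of $(0,\infty)$ onto itself. So it suffices to show that the shape‑to‑$q$ map
\begin{align}
\psi : \{{\bf a}\in(0,\infty)^N : a_1\cdots a_N = 1\} \longrightarrow \Big\{q\in(0,\infty)^N : \textstyle\sum_j q_j = \tfrac12\Big\}, \qquad \psi({\bf a}) = (A_1({\bf a}),\dots,A_N({\bf a})),
\end{align}
is a homeomorphism. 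Injectivity and surjectivity of this (or an equivalent) map is precisely what DiBenedetto–Friedman establish in their construction of the homeomorphism between ellipsoids and quadratic asymptotics \cite[Proof of (5.4)]{DiBenedettoFriedman}; I would either cite it directly or reprove it by the standard route: $\psi$ is continuous and injective (if two ellipsoids have the same interior potential they coincide, e.g. by unique continuation / real‑analyticity of the potential across the boundary together with the jump in $\Delta$), both domain and target are $(N-1)$‑dimensional manifolds (cells), and $\psi$ is proper — as any $a_j\to 0$ or $a_j\to\infty$ along the constraint surface, the corresponding $A_j$ degenerates to an endpoint value, so preimages of compact sets stay in a compact set. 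Invariance of domain then gives that $\psi$ is an open map, and properness plus openness plus connectedness of the target yields surjectivity; injectivity gives it is a homeomorphism. Finally, to get $V_E(x) = c - q(x)$ \emph{on all of $E$} (not just the trace identity) I note that once the $A_j$ are matched the polynomial $c - q$ and $V_E$ agree as polynomials, hence on $E$.

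The main obstacle is the properness/boundary‑degeneracy analysis of $\psi$: one must show carefully that as the ellipsoid degenerates (some semi‑axis ratio $\to 0$ or $\to\infty$) the tuple $(A_1,\dots,A_N)$ converges to the boundary of the simplex $\{\sum q_j = \tfrac12,\ q_j\ge 0\}$, i.e. that no coefficient stays bounded away from both $0$ and its maximal value. This requires estimating the elliptic integrals $\int_0^\infty (a_j^2+s)^{-1}\prod_k(a_k^2+s)^{-1/2}\,ds$ in the various limiting regimes — a routine but slightly delicate asymptotic computation — and here the hypothesis on the dimension is what guarantees the relevant integrals converge and behave monotonically. All of this is exactly the content extracted from \cite{DiBenedettoFriedman}, so in practice the cleanest writeup is to state the needed facts about $A_0,A_j$ as consequences of that reference and then assemble the scaling reduction and the degree argument as above. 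With $\psi$ a homeomorphism and the constant absorbed by dilation, Lemma~\ref{lem:existence_of_suitable_ellipsoids} follows.
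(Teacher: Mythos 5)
Your proposal is mathematically sound in outline, but it takes a considerably more laborious route than the paper. The paper's proof is short: it invokes \cite[proof of (5.4)]{DiBenedettoFriedman} as a black box to get, for the given $q$, \emph{some} centered ellipsoid $\tilde E$ and \emph{some} constant $\tilde c>0$ with $V_{\tilde E}=\tilde c - q$ on $\tilde E$; it then uses the scaling law $V_{\beta\tilde E}(x)=\beta^2 V_{\tilde E}(x/\beta)$ (which you also derive) to set $E=\beta\tilde E$ with $\beta=\sqrt{c/\tilde c}$, so that the constant matches; and it settles uniqueness by citing the comparison/Hopf-lemma argument of \cite{ellipsoid}, which shows the ellipsoid is unique up to scale, with the scale pinned by $c=V_E(0)$. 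You instead propose to reprove the DiBenedetto--Friedman homeomorphism from scratch: write $V_E$ inside $E$ in terms of the elliptic-integral coefficients $A_j(\mathbf a)$, mod out the scaling (correctly: $A_j$ scale-invariant for $j\ge1$, $A_0$ homogeneous of degree $2$), and run properness $+$ invariance-of-domain on the shape map $\psi$. This buys self-containment at the cost of the ``routine but slightly delicate'' asymptotics you yourself flag, which is precisely what the citation avoids.

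One concrete caveat about your injectivity step: ``if two ellipsoids have the same interior potential they coincide, by unique continuation / real-analyticity across the boundary together with the jump in $\Delta$'' does not work as stated. The potential $V_E$ is only $C^{1,1}$ across $\partial E$, not real-analytic, and in any case $V_{E_1}$ and $V_{E_2}$ only agree (with the polynomial $c-q$) on $E_1\cap E_2$, so there is no common analytic function to continue. The actual mechanism is the comparison/Hopf-boundary-point argument applied to $V_{E_1}-V_{E_2}$ after arranging $E_1\subset E_2$ by an inward sliding/dilation, which is exactly what \cite{ellipsoid} (steps 2--3 of the proof of Theorem~2 there) carries out; the paper cites that. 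If you pursue your self-contained route, you need to replace the unique-continuation sentence with this comparison argument (or with strict monotonicity of the coefficient map $\psi$ in a suitable partial order).
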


\begin{proof}[Proof of Lemma \ref{lem:existence_of_suitable_ellipsoids}]
The proof is a corollary to a result by 
DiBenedetto and Friedman in \cite{DiBenedettoFriedman} (see the proof of (5.4) therein). They show that for each polynomial $q$ as above there is an ellipsoid $\tilde{E}$, centred at the origin, and some constant $\tilde{c} >0$ such that
	\begin{align}
		V_{\tilde{E}} (x) = \tilde{c} - q(x) \quad \text{ for all } x \in \tilde
		E.
	\end{align}

A direct computation shows that the Newtonian potential obeys the scaling law 
\begin{align} \label{eq:scaling_of_Newton_potential}
	 V_{\beta \tilde{E}}(x) = \beta^2 V_{\tilde{E}} \bra {\frac{x}{\beta}}
\text{ for all } \beta >0.
\end{align}
Thus for all $x \in \beta \tilde{E}$,
\begin{align}
	V_{\beta \tilde{E}}(x) = \beta^2 \tilde{c} -q(x).
\end{align}
	Choosing $\beta := \sqrt{\frac{c}{\tilde{c}}}$ and $E:= \beta \tilde{E}$ finishes the proof.
	\\
It remains to prove uniqueness of the ellipsoid $E$. The comparison and Hopf-principle argument in \cite{ellipsoid} (see step 2 and 3 in the proof of Theorem 2 therein) implies that the ellipsoid $E$ is unique up to scaling, and prescribing the constant $c = V_E(0)$ rules out this degree of freedom. 
\end{proof}

\begin{proof}[Proof of Theorem \ref{prop:existence_of_paraboloid_solutions}]
	\mbox{} \\
	\textbf{Step 1.} \emph{Construction of a suitable sequence of ellipsoids} \\
Let us define for each $n \in \N$
\begin{align}\label{eq:qn}
	q^n(x) := \bra { 1- \frac{2}{n^2}  }  p_{\bf b}(x') + \frac{1}{n^2} x_N^2 \quad   \quad \text{ and } \quad    c_n := \bra { \frac{b_N n}{2}  }^2 >0   .
\end{align}
Then Lemma \ref{lem:existence_of_suitable_ellipsoids} implies  that there is a centered ellipsoid $\tilde{E}^n$ such that
\begin{align} \label{eq:Newton_potential_on_E_n}
	V_{\tilde{E}^n} = c_n -q^n \quad \text{ on } \tilde{E}^n.
\end{align}
In order to produce the prescribed linear term in the Newtonian potential expansion we translate $\tilde{E}^n$ by $\tau_n e^N$, where $\tau_n :=  \frac{b_N}{2} n^2$, i.e.
\begin{align}
E^n := \tilde{E}^n + \tau_n e^N.
\end{align}
We infer from \eqref{eq:Newton_potential_on_E_n} that for all $x \in E^n$   
\begin{align}
V_{E^n}(x) &= V_{\tilde{E}^n} \bra {x - \tau_n e^N  } = c_n - \bra { 1- \frac{2}{n^2}  }   p_{\bf b}(x') - \frac{1}{n^2} x_N^2 + \frac{2 \tau_n}{n^2} x_N - \frac{1}{n^2} \tau_n^2 \\
 &= b_N x_N -q^n(x).\label{eq:rephrased-pot}
\end{align}
	\textbf{Step 2.} \emph{Switching to the obstacle problem and passing to the limit.} \\
In order to be able to use known results and techniques from the analysis of the obstacle problem we make use of the close relation between null quadrature domains and the obstacle problem (cf. \cite{KarpMargulis_free_boundaries}). Defining for $n \in \N$
\begin{align} \label{eq:Obstacle_problem_paraboloid_solution_construction}
	u_n := q^n -b_N x_N + V_{E^n} \quad \text{ in } \R^N,
\end{align}
$u_n$ is a non-negative solution of the obstacle problem
\begin{align}
	\Delta u_n = \chi_{\set {u_n >0}} ~ \text{ in } \R^N \quad \text{ and } \set {u_n = 0} = E^n
\end{align}
(see for example \cite[Theorem II]{CaffarelliKarpShahgolian_Annals_2000}).
Using the non-negativity of the Newtonian potential together with
\eqref{eq:rephrased-pot} and \eqref{eq:qn}
we obtain that for all $x \in E^n$, $n \geq 2$
\begin{align}
p_{\bf b}(x') \leq 2 ~b_N x_N.
\end{align}
Since this estimate is independent of $n$, there is a paraboloid $\tilde{P}=\{ p_{\bf b}(x') \leq 2~ b_N x_N\}$ such that 
\begin{align} \label{eq:En_is_contained_in_paraboloid}
	E^n \subset  \tilde{P} \text{ for every } n \in \N \setminus \{1\}.
\end{align}
From Lemma \ref{lem:existence_of_newton_potential} we know that the Newtonian potential of $\tilde{P}$ is well-defined and locally bounded in dimension $N \geq 6$. As
\begin{align}
	0\le V_{E^n} \leq V_{\tilde{P}} \quad \text{ in } \R^N \text{ for all }n \in \N \setminus \{1\},
\end{align}
we obtain that $(V_{E^n})_{n\in \N}$ and $(u_n)_{n\in \N}$ are bounded in $L^\infty_{\operatorname{loc}}(\R^N)$.
From $L^p$-theory we infer that
for each $p \in [1,\infty)$ and each $\alpha\in (0,1)$,
\begin{align}
	(u_n)_{n\in \N} \quad \text{ is bounded in } W^{2,p}_{\operatorname{loc}}(\R^N)\cap C^{1,\alpha}_{\operatorname{loc}}(\R^N).
\end{align}
Thus there is a subsequence (again labeled $(u_n)_{n \in \N}$) such that
\begin{align} \label{eq:C1alpha_limit_of_ellipsoidal_solutions}
	u_n \to u \quad \text{ in } C_{\operatorname{loc}}^{1,\alpha}(\R^N) \text{ as }n\to\infty,
\end{align}
and (cf. \cite[Proposition 3.17]{PetrosyanShahgholianUraltseva_book}) $u$ is a non-negative solution of the obstacle problem, i.e. $u$ solves
\begin{align}
	\Delta u = \chi_{\set {u>0}} \quad \text{ in } \R^N.
\end{align}
\\
	\textbf{Step 3.} \emph{Identification of the coincidence set of $u$ and switching back to the Newtonian potential expansion.} \\
In order to identify the coincidence set of $u$ we will pass to the limit in the Newton-potential expansion \eqref{eq:Obstacle_problem_paraboloid_solution_construction} of $u_n$. To this end recall that each ellipsoid $E^n$ is the sublevel set of a polynomial, so $E^n$ is of the form
\begin{align} \label{eq:ellipsoidal_expression_as_sublevelset}
E^n = \set {  \sum \limits_{j=1}^{N-1} \frac{x_j^2}{B_{j,n}^2} + \frac{(x_N - \tau_n)^2}{B_{N,n}^2} \leq 1      },
\end{align} 
where $B_{j,n} \in (0, \infty)$ are the semiaxes of $E^n$ and $\tau_n$ is the translation in $e^N$-direction as defined in step 1 (for all $n \in \N$ and $j \in \set {1, \dots, N}$).

Since for all $n \in \N$, $E^n$ is defined by finitely many coefficients $(B_{1,n}, \dots, B_{N,n}, \tau_n)$ which converge (passing if necessary to a subsequence) in $[0,\infty]^{N+1}$ we infer that
\begin{align} \label{eq:convergence_of_characteristic_functions_of_E_n}
	\chi_{E^n} \to \chi_M \quad \text{pointwise almost everywhere in } \R^N \text{ as } n \to \infty,
\end{align}
where $M \subset \tilde{P}$ is some measurable set. Using
\begin{align}
	\chi_{E^n}(y) \abs {x-y}^{2-N} \leq \chi_{\tilde{P}}(y) \abs {x-y}^{2-N} \quad \text{ for all } x,y \in \R^N
\end{align}
we obtain by dominated convergence that
\begin{align}
	V_{E^n} \to V_{M} \quad \text{pointwise in } \R^N \text{ as } n \to \infty.
\end{align}
Combining this fact with \eqref{eq:Obstacle_problem_paraboloid_solution_construction} and \eqref{eq:C1alpha_limit_of_ellipsoidal_solutions} we obtain the Newton-potential expansion 
\begin{align} \label{eq:Newton_potential_expansion_of_limit_of_ellipsoidal_solutions}
	u(x) = p_{\bf b}(x') -b_N x_N +V_M(x) \quad \text{ for all } x \in \R^N.
\end{align}
It remains to identify the set $M$. First of all, from \eqref{eq:Newton_potential_expansion_of_limit_of_ellipsoidal_solutions} we infer that $M$ has non-vanishing Lebesgue-measure, i.e. $\abs{M} >0$: Otherwise $V_M \equiv 0$ in $\R^N$ which combined with \eqref{eq:Newton_potential_expansion_of_limit_of_ellipsoidal_solutions} would contradict the fact that $u$ is non-negative in $\R^N$.

Note that $E^n \subset \tilde{P}$ implies that for all $n \in \N$, $0 \leq B_{N,n} \leq \tau_n$. Combining this observation with \eqref{eq:convergence_of_characteristic_functions_of_E_n} and the fact that $M$ has positive measure we obtain that the ellipsoids $E^n$ cannot vanish towards infinity in the $e^N$-direction and therefore, recalling that by the definition 
$\tau_n = \frac{b_N}{2} n^2$, passing if necessary to
a subsequence,
\begin{align} \label{eq:convergence_of_translation_by_N-semiaxis}
1 \leq	\frac{\tau_n}{B_{N,n}} \to 1 \quad \text{ as } n \to \infty.
\end{align}
Let us now rewrite \eqref{eq:ellipsoidal_expression_as_sublevelset} as 
\begin{align}
	E^n = \set {  \sum \limits_{j=1}^{N-1} \frac{\tau_n}{B_{j,n}^2} x_j^2+ \frac{\tau_n}{B_{N,n}^2} x_N^2 -2 \bra {\frac{\tau_n}{B_{N,n}}}^2 x_N \leq \pra {1-  \bra {\frac{\tau_n}{B_{N,n}}}^2 }   \tau_n   }.
\end{align}
We claim that $  \pra {1-  \bra {\frac{\tau_n}{B_{N,n}}}^2 }   \tau_n \leq 0$ is bounded in $n$. Assume towards a contradiction that $  \pra {1-  \bra {\frac{\tau_n}{B_{N,n}}}^2 }   \tau_n$ is unbounded, i.e. that there is a subsequence such that $  \pra {1-  \bra {\frac{\tau_n}{B_{N,n}}}^2 }   \tau_n \to -\infty$. Then by \eqref{eq:convergence_of_translation_by_N-semiaxis}
\begin{align}
	E^n \subset \set {  -2 \bra { \frac{\tau_n}{B_{N,n}} }^2 x_N \leq \pra {1-  \bra {\frac{\tau_n}{B_{N,n}}}^2 }   \tau_n     } \to \emptyset \quad \text{ as } n \to \infty,
\end{align}
which is incompatible with \eqref{eq:convergence_of_characteristic_functions_of_E_n} and the fact that $\abs{M}>0$. Hence, passing if necessary to a subsequence, $\pra {1-  \bra {\frac{\tau_n}{B_{N,n}}}^2 }   \tau_n \to c \in (-\infty,0]$ as $n \to \infty$.

Passing if necessary to another subsequence, $\frac{\tau_n}{B_{j,n}^2} \to B_j \in [0, \infty]$ as $n \to \infty$ for all $j \in \set {1, \dots , N-1}$. We claim  that $B_j \in (0,\infty)$ for all $j \in \set {1, \dots, N-1}$. 
Assume first towards a contradiction that there is $i \in \set {1, \dots, N-1}$ and a subsequence such that $\frac{\tau_n}{B_{i,n}^2} \to +\infty$ then
\begin{align}
E^n \subset  \set {  \frac{\tau_n}{B_{i,n}^2} x_i^2 - 2 \bra { \frac{\tau_n}{B_{N,n}} }^2 x_N \leq    \pra {1-  \bra {\frac{\tau_n}{B_{N,n}}}^2 }   \tau_n    }\to E^0 \subset \set {x_i =0},
\end{align}
which poses a contradiction to \eqref{eq:convergence_of_characteristic_functions_of_E_n} and the fact that $\abs{M}>0$.
To finish the proof assume towards a contradiction that there is $i \in \set {1, \dots, N-1}$ such that $\frac{\tau_n}{B_{i,n}^2} \to 0$. Then for all $n \in \N$
\begin{align}
	E^n &\supset \set {  \frac{\tau_n}{B_{i,n}^2} x_i^2 \leq  \pra {1-  \bra {\frac{\tau_n}{B_{N,n}}}^2 }   \tau_n ~  , ~ x_N=0, x_j = 0 , j \neq i  } \\
	&\to \set {x_N=0, x_j= 0 , j \neq i} \quad \text{ as } n \to \infty.
\end{align}
But this is impossible since from \eqref{eq:En_is_contained_in_paraboloid} we know that $E^n$ must be contained in the paraboloid $\tilde{P}$ for all $n \in \N$.

Summing up we conclude that (passing if necessary to a subsequence)
\begin{align}
	\chi_{E^n} \to \chi_M ~ \text{pointwise a.e.  as } n \to \infty,  \text{ where } M=\set { \sum \limits_{j =1}^{N-1} B_j x_j^2 -2 x_N \leq c   },
\end{align}
$ B_j \in (0,\infty)$ for all $j \in \set {1, \dots , N-1}$ and $c \in (-\infty,0]$.

Translating the paraboloid in the $e^N$-direction such that the constant part in the expansion agrees with $b_{N+1}$ finishes this step. \\
	\textbf{Step 4.} \emph{Identification of the sectional ellipsoids of the limit coincidence set.}\\
	We now know that $\{u=0\} = P_{\bf a}$, where 
	\begin{align}
P_{\bf a} := \{ (x',x_N) \in \R^N : x_N\ge -a_N, x' \in \sqrt{x_N + a_N} E'_{\bf a'}   \} .
		\end{align}
It remains to show that the sectional ellipsoid $E_{\bf a'}' \subset \R^{N-1}$  is up to scaling the unique ellipsoid $\tilde{E}' \subset \R^{N-1}$ from Lemma \ref{lem:existence_of_suitable_ellipsoids} 
such that 
\begin{align}
	V'_{\tilde{E}'}(x') = V'_{\tilde{E}'}(0) - p_{\bf b'}(x')  \quad \text{ for all } x' \in \tilde{E}'.
\end{align}
In order to prove this, let us define the following blow-down with moving center of the paraboloid solution $u$ (cf. \eqref{eq:Newton_potential_expansion_of_limit_of_ellipsoidal_solutions}) we have constructed:
\begin{align}
	\text{ for all  } k \in \N, x \in \R^N: \quad u_k(x)  := \frac{u(x^k + r_k x)}{r_k^2},
\end{align}
where $x^k := (0, x_N^k)$, $x_N^k \to \infty$ as $k \to  \infty$ and $r_k := \sqrt{x_N^k}$. Then by Calderon-Zygmond theory we infer that (up to taking a subsequence)
\begin{align}
	u_k \to \tilde{u} \quad \text{ in } C_{\loc}^{1,\alpha}(\R^N) \text{ as } k \to \infty
\end{align}
and it is known that (cf. \cite[Proposition 3.17]{PetrosyanShahgholianUraltseva_book}) $\tilde{u}$ is a global solution of the obstacle problem. Furthermore $\{ u=0 \} = P_{\bf a}$ implies that $\{ \tilde{u} =0\} = E_{\bf a'}' \times \R$. Therefore from Proposition \ref{prop:known_results} \eqref{prop:known_results_item_3} we infer that $\tilde{u}$ is independent of $x_N$, i.e. 
\begin{align}
	\tilde{u}(x) = \tilde{u}'(x') := \tilde{u}(x',0) \quad \text{ for all } x \in \R^N.
\end{align}
It is known (see  proof of Theorem II, Case 2, and 3 in \cite{CaffarelliKarpShahgolian_Annals_2000}) 
that every blow-down limit of any global solution of the obstacle problem is either a  half-space solution or a homogeneous polynomial of degree $2$ satisfying $\Delta q \equiv 1$. 
It is further known (see  proof of Theorem II, Case 2 in \cite{CaffarelliKarpShahgolian_Annals_2000}) that if  the blow-down of any solution is a half-space solution then 
the solution itself has to be a half-space solution. Since $\tilde{u}$ is not a half-space solution we can apply
Lemma \ref{lem:preservation_of_blow_down} and infer that the blow-down of $u$ coincides with that of $\tilde{u}$ and hence
\begin{align}
	\lim \limits_{\rho \to \infty} \frac{\tilde{u}(\rho x)}{\rho^2}  = p_{\bf b}(x') = \lim \limits_{\rho \to \infty} \frac{u(\rho x)}{\rho^2} \quad \text{ in } L^\infty(\partial B_1).
\end{align}
Therefore the function $\tilde{v}'(x') := \tilde{u}'(x') - p_{\bf b}(x') - V_{{E}_{\bf a'}'}'(x')$ is harmonic and has subquadratic growth and using Liouville's theorem we conclude that there is a linear function $\tilde{\ell}: \R^{N-1} \to \R$ and a constant $\tilde{c} \in \R$ such that $\tilde{v}' = \tilde{\ell} + \tilde{c}$ and hence
\begin{align}
	\tilde{u}'(x') = p_{\bf b}(x') + \tilde{\ell}(x') + \tilde{c} + V'_{{E}_{\bf a'}'}(x').
\end{align}
The fact that $\{  \tilde{u}' =0\} = E_{\bf a'}'$ implies that $\tilde{c} = V'_{{E}_{\bf a'}'}(0)$ and $\tilde{E}_{a'}'$ being centered implies that $\nabla \tilde{\ell} = \nabla V_{E_{\bf a'}'}'(0)=0$. Putting everything together we have that
\begin{align}
	V'_{E_{\bf a'}'}(x') = V'_{E_{\bf a'}'}(0) - p_{\bf b}(x') \quad \text{ for all  } x' \in E'_{\bf a'}.
\end{align}
Using the scaling property of the Newton potential (cf. \eqref{eq:scaling_of_Newton_potential}) there is $\lambda >0$ such that 
\begin{align}
	V'_{\lambda {E}_{\bf a'}'}(0) = V'_{\tilde{E}'}(0) \quad \text{ and} \quad V'_{\lambda {E}_{\bf a'}'}(x') = V'_{\lambda {E}_{\bf a'}'}(0) - p_{\bf b}(x') \quad \text{ in } \lambda E'.
\end{align}
The uniqueness in Lemma \ref{lem:existence_of_suitable_ellipsoids} implies that $\lambda E_{\bf a'}' = \tilde{E}'$.
\end{proof}

\section{Decay of the Newtonian potential of $P$ outside a narrow neighborhood of $P$}

The asymptotic behavior of the Newtonian potential at infinity will be crucial in our proof of Main Theorem**. 
In this section we are going to show decay of the Newtonian potential of $P$ towards infinity outside a narrow neighborhood of $P$. 
\begin{lem}
\label{lem:Newton_potential_vanishes}
	Let $N \geq 6$, $\gamma >0$,
	\begin{align}
		P := \set { (y',y_N) \in \R^N : \abs {y'} < \gamma y_N^\frac{1}{2}   },
	\end{align}
	and define for each $\mu > \frac{25}{72}$
	\begin{align}
		P^\mu := \set { (y',y_N) \in \R^N : \abs {y'} < \gamma y_N^{\frac{1}{2}+\mu}   }.
	\end{align}
	Then
	\begin{align} \label{eq:behaviour_Newton_potential_paraboloid_i}
		\sup \limits_{x \in (\R^N \setminus P^\mu) \cap \set {x_N >k} } V_P(x) \to 0 \quad \text{ as } k \to \infty
	\end{align}
	and  
\begin{align} \label{eq:behaviour_Newton_potential_paraboloid_ii}
		\sup \limits_{x \in (\R^N \setminus B_k ) \cap \set {x_N \leq \frac{k}{2}} } V_P(x) \to 0 \quad \text{ as } k \to \infty.
	\end{align}
\end{lem}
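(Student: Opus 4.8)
The plan is to estimate $V_P(x)=\alpha_N\int_P|x-y|^{2-N}\dx{y}$ directly by slicing $P$ in the $y_N$-variable, recalling that the slice $P\cap\{y_N=t\}$ is the $(N-1)$-ball of radius $\gamma t^{1/2}$ and hence has $(N-1)$-volume $|B_1'|\gamma^{N-1}t^{(N-1)/2}$. The mechanism to be exploited is that for $x\notin P^\mu$ the \emph{transversal} distance $|x'-y'|$ of $x$ to a slice of $P$ at height $\approx x_N$ is of order $x_N^{1/2+\mu}$, whereas for slices at heights far from $x_N$ the one-dimensional gap $|x_N-y_N|$ already forces $|x-y|$ to be large; these far-field parts are handled as in the proof of Lemma~\ref{lem:existence_of_newton_potential}, and in particular show $V_P(x)<\infty$ for $N\ge 6$.

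For \eqref{eq:behaviour_Newton_potential_paraboloid_i} I would first fix a band-width exponent
\[
\sigma\in\Bigl(\tfrac{N-1}{2(N-3)},\ \min\bigl(1,\ \mu(N-2)-\tfrac12\bigr)\Bigr).
\]
This interval is nonempty: $N\ge 6$ gives $\tfrac{N-1}{2(N-3)}<1$, and since $\tfrac{N-1}{2(N-3)}+\tfrac12=\tfrac{N-2}{N-3}$, the inequality $\tfrac{N-1}{2(N-3)}<\mu(N-2)-\tfrac12$ is equivalent to $\mu>\tfrac1{N-3}$, which holds because $\mu>\tfrac{25}{72}>\tfrac13\ge\tfrac1{N-3}$ for $N\ge 6$. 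Then, for $x\notin P^\mu$ with $x_N$ large, I would split $P=A_1\cup A_2\cup A_3$ with $A_1=P\cap\{y_N\le x_N-x_N^\sigma\}$, $A_2=P\cap\{|y_N-x_N|<x_N^\sigma\}$ and $A_3=P\cap\{y_N\ge x_N+x_N^\sigma\}$. On $A_2$: since $x\notin P^\mu$ we have $|x'|\ge\gamma x_N^{1/2+\mu}$, while $|y'|<\gamma y_N^{1/2}\le\sqrt2\,\gamma x_N^{1/2}$, so $|x-y|\ge|x'-y'|\ge\tfrac{\gamma}{2}x_N^{1/2+\mu}$ once $x_N$ is large; combined with $|A_2|\lesssim x_N^{\sigma+(N-1)/2}$ this yields $\int_{A_2}|x-y|^{2-N}\dx{y}\lesssim x_N^{\sigma+\frac12-\mu(N-2)}$, whose exponent is negative by the choice of $\sigma$. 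On $A_1$ and $A_3$ I would use $|x-y|\ge|x_N-y_N|$ and split each at the scale $x_N/2$, resp.\ $2x_N$: the pieces near height $x_N$ contribute $\lesssim x_N^{(N-1)/2+\sigma(3-N)}$ (negative exponent since $\sigma>\tfrac{N-1}{2(N-3)}$), while the far pieces contribute $\lesssim x_N^{(5-N)/2}$, the tail integral $\int^{\infty}t^{(3-N)/2}\dx{t}$ converging precisely for $N>5$. Summing and noting that all bounds depend on $x$ only through $x_N$ and are decreasing in $x_N$, one gets $\sup_{x\notin P^\mu,\ x_N>k}V_P(x)\lesssim k^{-\eps}\to0$.

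For \eqref{eq:behaviour_Newton_potential_paraboloid_ii} the constraint on $x$ is cruder: $x_N\le k/2$ and $|x|\ge k$ force $|x'|\ge\tfrac{\sqrt3}{2}k$. I would split $P$ at height $k$. For $y\in P$ with $y_N\le k$ one has $|x-y|\ge|x'-y'|\ge\tfrac{\sqrt3}{2}k-\gamma k^{1/2}\gtrsim k$ and $|P\cap\{y_N\le k\}|\lesssim k^{(N+1)/2}$, so this part contributes $\lesssim k^{(5-N)/2}$; for $y_N>k$ one has $y_N-x_N>y_N/2$ (as $x_N\le k/2<y_N/2$), whence $\int_{P\cap\{y_N>k\}}|x-y|^{2-N}\dx{y}\lesssim\int_k^\infty t^{(3-N)/2}\dx{t}\lesssim k^{(5-N)/2}$. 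Both estimates are uniform in $x$ and tend to $0$ since $N\ge 6$; the case $x_N<0$ is covered verbatim. I expect the main obstacle to be the bookkeeping of exponents in part (i): arranging the single parameter $\sigma$ so that the band contribution and both far contributions decay simultaneously. This is exactly where the two hypotheses enter — $N\ge 6$ to make the one-dimensional tail integrals converge and to kill the $x_N^{(5-N)/2}$ terms, and the gap $\mu>\tfrac1{N-3}$ (ensured by $\mu>\tfrac{25}{72}$ and $N\ge 6$) to keep the admissible range of $\sigma$ nonempty. Part (ii) is then routine in comparison.
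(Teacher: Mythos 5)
Your proof is correct and follows essentially the same strategy as the paper's: slice $P$ into a band $\{|y_N-x_N|<x_N^\sigma\}$ plus near and far pieces, bound $|x-y|$ from below by the transversal separation $\gtrsim x_N^{1/2+\mu}$ on the band and by $|x_N-y_N|$ off it, and power-count, with part (ii) handled by the crude split at height $k$ exactly as in the paper. The only difference is cosmetic: you keep the band-width exponent $\sigma$ as a free parameter located in an explicit interval, whereas the paper hard-codes $\sigma=\tfrac{8}{9}$ (which indeed lies in your admissible interval for every $N\ge 6$ and $\mu>\tfrac{25}{72}$).
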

The lemma states that the Newtonian potential of $P$ vanishes outside a narrow neighborhood of $P$. (Note that $k$ in \eqref{eq:behaviour_Newton_potential_paraboloid_i} and \eqref{eq:behaviour_Newton_potential_paraboloid_ii} are independent.)

\begin{proof}
 As in \eqref{decomp}
 we decompose $P$ up into a set of points that are close to $x$ and
 the complement of that set, and
 we estimate the  Newtonian potential of each set individually.
	
	As $P$ is axially symmetric and $V_P(\lambda x'+x_N e^N)$ is a decreasing function of $\abs{\lambda}$ we obtain that
	\begin{align}
		\sup \limits_{x \in (\R^N \setminus P^\mu) \cap \set {x_N = k} } V_P(x) = V_P ( \gamma k^{\frac{1}{2} + \mu} e^1 + k e^N) .
	\end{align} 
	Furthermore,
	$	P  = P_1 \cup P_2 \cup P_3$ where 
	\begin{align}
	P_1 &= \set { \abs {y'} < \gamma y_N^\frac{1}{2} \land y_N < x_N -x_N^\frac{8}{9}  },   \\
	P_2&=  \set { \abs {y'} < \gamma y_N^\frac{1}{2} \land \abs{x_N -y_N} \leq x_N^\frac{8}{9}  }\text{ and} \\
	P_3&=   \set { \abs {y'} < \gamma y_N^\frac{1}{2} \land  y_N > x_N +x_N^\frac{8}{9}   } .
	\end{align}
Using this decomposition,
	$
		V_P=
		V_{P_1}
                +V_{P_2}
                +V_{P_3}.
	$
The first term satisfies
	\begin{align}
		V_{P_1} (\gamma k^{\frac{1}{2} +\mu} e^1 +k e^N )  &\leq \alpha_N \int \limits_0^{k - k^\frac{8}{9}}   (k-y_N)^{2-N} \gamma^{N-1} \abs{B_1'}  \bra {y_N^\frac{1}{2}}^{N-1} \dx{y_N}
		\leq C_1 \> k^{\frac{8}{9}(2-N) +\frac{1}{2}(N+1)} 
	\end{align}
which vanishes as $k \to \infty$ by the assumption $N \geq 6$.
Concerning the second term we obtain for large $k$ that
\begin{align}
	V_{P_2} (\gamma k^{\frac{1}{2} +\mu} e^1 +k e^N )  &\leq \alpha_N \int \limits_{P_2} \frac{1 }{ \abs{ \gamma k^{\frac{1}{2} +\mu} -y_1}^{N-2}  } \dx{y} 
	\leq \alpha_N \bra { \frac{\gamma}{2} k^{\frac{1}{2} +\mu}  }^{2-N} \int \limits_{k -k^\frac{8}{9} }^{k + k^\frac{8}{9}} \abs{B_1'} \bra {\gamma \sqrt{y_N}}^{N-1} \dx{y_N} \\
	&\leq \alpha_N \bra { \frac{\gamma}{2} k^{ \frac{1}{2} +\mu  }   }^{2-N} \abs {B_1'} \gamma^{N-1} (2k)^\frac{N-1}{2} 2 k^\frac{8}{9} 
	\leq C_2 \> k^{ \bra {\frac{1}{2} +\mu}(2-N) + \frac{1}{2}(N-1) + \frac{8}{9}   },
\end{align}
where the right-hand side vanishes as $k \to \infty$ for each $N\geq 6$ and $\mu > \frac{25}{72}$. With regard to the last term we get
\begin{align}
  &V_{P_3}( \gamma k^{\frac{1}{2} +\mu} e^1 +k e^N ) \leq \alpha_N \int \limits_{k+k^\frac{8}{9}}^{2k} \abs{k-y_N}^{2-N} \gamma^{N-1} y_N^\frac{N-1}{2} \abs{B_1'} \dx{y_N} \\& \quad \quad\quad+ \alpha_N \int \limits_{2k}^{+\infty} \abs{k-y_N}^{2-N} \gamma^{N-1} y_N^\frac{N-1}{2} \abs{B_1'} \dx{y_N} \\
	&\leq \alpha_Nk^{\frac{8}{9} (2-N)} \int \limits_{k+k^\frac{8}{9}}^{2k} \gamma^{N-1} y_N^\frac{N-1}{2} \abs{B_1'} \dx{y_N}  + \alpha_N\int \limits_k^{+\infty} y_N^{2-N} \gamma^{N-1} \bra {y_N+k}^\frac{N-1}{2} \abs{B_1'} \dx{y_N} \\
	&\leq \alpha_N\gamma^{N-1} \abs{B_1'} \frac{2}{N+1} 2^\frac{N+1}{2} k^{\frac{8}{9}(2-N) + \frac{1}{2}(N+1)} + \frac{\alpha_N\gamma^{N-1} \abs{B_1'} 2^\frac{N+1}{2} }{N-5} k^{\frac{5-N}{2}},
\end{align}
where the right-hand side vanishes as $k \to \infty$ for each $N\geq 6$ and $\mu > \frac{25}{72}$. This finishes the proof of \eqref{eq:behaviour_Newton_potential_paraboloid_i}.

Finally, we prove \eqref{eq:behaviour_Newton_potential_paraboloid_ii}. For $k$ large enough and every $x \in (\R^N \setminus B_k) \cap \set {y_N \leq \frac{k}{2}}$,
	\begin{align}
	V_P(x) &=  \alpha_N \int \limits_{P \cap \set {y_N \leq k} } \frac{1}{ \abs {x-y}^{N-2}  } \dx{y} + \alpha_N \int \limits_{P \cap \set {y_N \geq k} } \frac{1}{ \abs {x-y}^{N-2}  } \dx{y} \\
	&\leq  \alpha_N \int \limits_0^k  \bra {\frac{k}{2}}^{2-N}   \abs { B_1' } \bra { \gamma y_N^\frac{1}{2}  }^{N-1} \dx{y_N}  +  \alpha_N \int \limits_k^\infty  \bra {y_N - \frac{k}{2}}^{2-N} \abs {B_1'} \bra { \gamma y_N^\frac{1}{2}  }^{N-1} \dx{y_N} \\
	&\leq  \alpha_N \int \limits_0^k  \bra {\frac{k}{2}}^{2-N}   \abs {B_1' } \bra { \gamma y_N^\frac{1}{2}  }^{N-1} \dx{y_N}  + \alpha_N \int \limits_k^\infty  \bra {\frac{y_N}{2}}^{2-N} \abs {B_1'} \bra { \gamma y_N^\frac{1}{2}  }^{N-1} \dx{y_N} \\
	&\leq \alpha_N \frac{\abs { B_1'} (2 \gamma)^{N-1} }{N+1} k^{\frac{5}{2} - \frac{N}{2} } + \alpha_N \frac{ (2 \gamma)^{N-1} \abs {B_1'}}{N-5} k^{\frac{5}{2} - \frac{N}{2}} \quad 
	\to 0 \quad \text { as } k \to \infty.
	\end{align}
\end{proof}


\section{Proof of Main Theorem** via a comparison principle with insufficient information on the boundary
} \label{section:comparison}

In this  section we shall finish the proof of Main Theorem**.
Unlike the compact case in which the unknown coincidence set can be touched by an ellipsoid from the outside (cf.  \cite{ellipsoid}), it does not seem to be feasible to prove ---using only the knowledge we have gathered so far--- that the unknown coincidence set contains/is contained in a paraboloid. Instead we will prove that the unknown solution is on a large part of $\partial B_R$ greater than a known paraboloid solution and that the difference of the two solutions satisfies a one-sided estimate on the complement of that large part. The combination of those two estimates will lead to a comparison principle.\\
{\em Proof of Main Theorem** }\\
\noindent
\textbf{Step 1.} \emph{Construction of a comparison solution.} \\ 
Let us recall (cf. \eqref{eq:Newton_potential_expansion_of_u}) that
\begin{align}
u = p(x') + \ell(x) + V_\cC(x) + c \text{ in } \R^N.
\end{align}
Employing Theorem  \ref{prop:existence_of_paraboloid_solutions}
and translating if necessary 
we find a paraboloid $P$ such that $P\cap \{ x_N\le 0\}=\{ 0\}$ and that
\begin{align}
  u_P := p(x') + \ell(x) + V_P(x) +c_P\text{ in } \R^N
\end{align}
 is a solution of the obstacle problem; here $c_P$ is a constant.

Let us define for $\lambda \geq 0$ the translated paraboloid
\begin{align}
 P_\lambda := P - \lambda e^N
\end{align}
and
\begin{align}
	u_{P_\lambda}(x) := u_P(x+\lambda e^N).
\end{align}
Then
\begin{align}
	u_{P_\lambda}(x) &= p(x') + \ell(x) +V_{P_\lambda}(x) + \lambda \ell(e^N) + c_P \text{ in } \R^N, 
\end{align}
and since $V_\cC(x) \geq 0$,
\begin{align} \label{eq:expansion_of_difference_between_u_lambda_and_u}
	u_{P_\lambda}(x) -u(x) 
	\leq V_{P_\lambda}(x)  + \lambda \ell(e^N) +c_P-c\text{ in } \R^N.
\end{align}

\noindent
\textbf{Step 2.} \emph{Comparison for every $\lambda>\bar{\lambda} := (c_P-c)/(-\ell(e^N))$.} \\
Our aim is to compare $u_{P_\lambda}$ and $u$ for sufficiently large $\lambda$. To this end we will apply a sup-mean-value-inequality for non-negative subharmonic functions to
\begin{align}
	z^r(x) := z(rx), \text{ where } z:= \max \set { u_{P_\lambda} -u ,0      } \geq 0.
\end{align}
As, due to the fact that $u$ and $u_{P_\lambda}$ solve a semilinear PDE of the form $\Delta u=g(u)$ with $g$ non-decreasing, $z^r$ is a subharmonic function, 
so that
\begin{align} \label{sup_boundary_value_inequality}
  \sup \limits_{B_\frac{1}{2}} z^r \leq C(N) \fint \limits_{\partial B_1} z^r \dH{N-1} 
  \text{ for all } r \in (0,+\infty).
\end{align}
Let  $\gamma< +\infty$ be such that
\begin{align}
	P \subset  \tP := \set {  (y',y_N) \in \R^N : \abs {y'} \leq \gamma\sqrt{y_N}}  .
\end{align}
It follows
that
\begin{align} \label{eq:paraboloids_are_contained}
	\tP_\lambda := \tP - \lambda e^N \supset P_\lambda .
\end{align}
Choosing $\mu := \frac{7}{20} > \frac{25}{72}$ and $\tP^\mu$ as in Lemma \ref{lem:Newton_potential_vanishes} we set
\begin{align}
	\tP^\mu_\lambda := \tP^\mu - \lambda e^N.
\end{align}
By \eqref{eq:ell_has_sign}, $\ell(e^N) < 0$. This allows us to choose
 $\lambda_0>0$ sufficiently large such that
\begin{align}\label{minustwo}
c_P-c+\lambda_0 \ell(e^N) <0.
\end{align}
In the remainder of this step, we will prove $u_{P_{\lambda}} \leq u$
for each $\lambda$ such that $c_P-c+\lambda \ell(e^N) <0$, in particular
for $\lambda=\lambda_0$.
First, \eqref{eq:paraboloids_are_contained} and Lemma \ref{lem:Newton_potential_vanishes} tells us that there is $r_0<+\infty$ such that for all $r>r_0$
 \begin{align}
 	V_{P_{\lambda}} \leq V_{\tP_{\lambda}} < -(c_P-c+\lambda \ell(e^N))\quad \text{ on } \partial B_r \setminus \tP^\mu_{\lambda}.
 \end{align}
 So for $r>r_0$,
 \begin{align}
 	\max \set { u_{P_{\lambda}} -u ,0   } =0 \quad \text{ on } \partial B_r \setminus \tP^\mu_{\lambda}.
 \end{align}
Combining this with \eqref{eq:expansion_of_difference_between_u_lambda_and_u} we  estimate the right--hand side of \eqref{sup_boundary_value_inequality} as 
\begin{align}
	\fint \limits_{\partial B_1} z^r  \dH{N-1}   &
	 \leq  \frac{1}{ \abs {\partial B_r}  } \int \limits_{\partial B_r  \cap \tP^\mu_{\lambda}   } \max \set {V_{P_\lambda} + \lambda \ell(e^N) + c_P-c,0}  \dH{N-1}  
	\leq \frac{1}{ \abs {\partial B_r}  } \int \limits_{\partial B_r  \cap \tP^\mu_{\lambda}   }  V_{P_{\lambda}} \dH{N-1}.  
\end{align}
In the remainder of this step we will estimate the right-hand side.
By a direct calculation we obtain that for sufficiently large $r$,
\begin{align} \label{eq:estimate_of_sphere_and_paraboloid}
	\partial B_r \cap \tP^\mu_{\lambda} \subset \set {  r - 5 \gamma^2 r^{2 \mu} < y_N < r    }.
\end{align}
Let us decompose and estimate $V_{\tP_{\lambda}}$ as follows:
\begin{align}
	V_{\tP_{\lambda}} &= \alpha_N \int \limits_{\tP_{\lambda}} \frac{1}{ \abs {x-y}^{N-2}  } \dx{y} \\
	&\le  \alpha_N\Bigg( \int \limits_{ \tP_{\lambda,1}} \frac{1}{ \abs {x_N-y_N}^{N-2}  } \dx{y} +  \int \limits_{ \tP_{\lambda,2}} \frac{1}{ \abs {x'-y'}^{N-2}  } \dx{y} +  \int \limits_{ \tP_{\lambda,3}} \frac{1}{ \abs {x_N-y_N}^{N-2}  } \dx{y} \Bigg) ,
\end{align}
where
\begin{align}
	\tP_{\lambda,1} &:= \tP_{\lambda} \cap \set { y_N < r -6\gamma^2 r^{2 \mu  }}, \\
		\tP_{\lambda,2} &:= \tP_{\lambda} \cap \set { r -6\gamma^2 r^{2 \mu  }   < y_N <  r +6\gamma^2 r^{2 \mu  }  }, \\
			\tP_{\lambda,3} &:= \tP_{\lambda} \cap \set { y_N >  r +6\gamma^2 r^{2 \mu  }  }.
\end{align}
In order to avoid unnecessary confusion we will in the following always use $y$ as the variable of integration in the Newtonian potential integral and $x$ will always be on $\partial B_r \cap \tP_\lambda^\mu$
so that $x_N$ satisfies the bound in 
\eqref{eq:estimate_of_sphere_and_paraboloid}.

Using  the scaling and growth properties of Newtonian potential like integrals on bounded sets as well as Fubini's Theorem we obtain for the second part of the decomposition $\tP_{\lambda,2}$, that
\begin{align}
 \int \limits_{ {\tP_{\lambda,2}}} \frac{1}{ \abs {x'-y'}^{N-2}  } \dx{y} \leq \int \limits_{r-6\gamma^2 r^{2 \mu }}^{r+6\gamma^2 r^{2 \mu }} W_{2 \gamma y_N^\frac{1}{2} B_1'}(x') \dx{y_N},
\end{align}
 where for any bounded set $M \in \R^{N-1}$ we define for all $x' \in \R^{N-1}$
 \begin{align}
 	W_M(x') := \int \limits_M \frac{1}{\abs {x'-y'}^{N-2}} \dx{y'}.
 \end{align}
 A calculation shows that $W$ obeys for all $\beta>0$ and bounded and measurable $M \subset \R^{N-1}$ the following scaling law: 
 \begin{align} \label{eq:sacling_of_W}
 	W_{\beta M}(x') = \beta W_M \bra { \frac{x'}{\beta}  } \quad \text{ for all } x' \in \R^{N-1},
 \end{align}
By another direct calculation we obtain that
 \begin{align}
 	\abs{x'}^{N-2}W_M(x') \to \abs{M} \quad \text{ uniformly as } \abs{x'} \to \infty,
 \end{align}
 which implies that there is $C(M)<+\infty$ such that for all $x' \in \R^{N-1}$
 \begin{align} \label{eq:asymptotics_of_W}
 W_M (x') \leq C(M) \abs {x'}^{2-N}.
 \end{align}
 Combining \eqref{eq:sacling_of_W} and \eqref{eq:asymptotics_of_W} this allows us to estimate
 \begin{align}
 W_{2 \gamma y_N^\frac{1}{2} B_1'}(x') = 2 \gamma \sqrt{y_N} W_{B'} \bra {\frac{x'}{2 \gamma \sqrt{y_N}}} \leq 2 \gamma \sqrt{y_N} C(B_1') \abs {\frac{x'}{2 \gamma \sqrt{y_N}}}^{2-N}.
 \end{align}
 Consequently, for sufficiently large $r$,
 \begin{align}
  &\int \limits_{ {\tP_{\lambda,2}}} \frac{1}{ \abs {x'-y'}^{N-2}  } \dx{y} \leq (2 \gamma)^{N-1} C(B_1') \abs {x'}^{2-N}\int \limits_{r-6\gamma^2 r^{2 \mu }}^{r+6\gamma^2 r^{2 \mu }} y_N^{\frac{N-1}{2}} \dx{y_N}  \\ &\leq (2 \gamma)^{N-1} C(B_1') \abs{x'}^{2-N} (2r)^\frac{N-1}{2} (12 \gamma^2 r^{2\mu})
  = C_1(N,\gamma) \> \abs {x'}^{2-N} r^{\frac{N-1}{2} +2\mu}. \label{eq:Newton_potential_on_bad_set_main_part_inner_estimate}
 \end{align}
In order to estimate integrals over the sphere cap $\partial B_r \cap \tP^\mu_{\lambda}$, we are going to use for sufficiently large $r$ and every non-negative Borel-measurable function $f$, that
 \begin{align} 
 \int \limits_{\partial B_r \cap \tP^\mu_{\lambda} } f(x') \dH{N-1}(x) &= \int \limits_{B_{2 \gamma r^{\frac{1}{2} + \mu}}'} f(x') \frac{r}{ \sqrt{ r^2 - \abs {x'}^2   }  } \dx{x'} \leq 2  \int \limits_{B_{2 \gamma r^{\frac{1}{2} + \mu}}'} f(x')  \dx{x'}.\label{eq:integration_on_part_of_sphere}
 \end{align} 
 Hence, employing \eqref{eq:Newton_potential_on_bad_set_main_part_inner_estimate} for large $r$ we get that
 \begin{align}
  &\int \limits_{ \partial B_r \cap \tP^\mu_{\lambda}   }              \int \limits_{ {\tP_{\lambda,2}}} \frac{1}{ \abs {x'-y'}^{N-2}  } \dx{y}               \dH{N-1}(x)   
   \leq 2 C_1 \> r^{\frac{N-1}{2} +2\mu } \int \limits_{B_{2 \gamma r^{\frac{1}{2} + \mu}}'} \abs {x'}^{2-N} \dx{x'} \\
 	&=  2 C_1 \> r^{\frac{N-1}{2} +2\mu } \int \limits_0^{2 \gamma r^{\frac{1}{2} + \mu}} \abs {\partial B'_1} \rho^{N-2} \rho^{2-N} \dx{\rho} = 4 C_1 \> \abs {\partial B'_1} \gamma r^{\frac{N}{2} +3\mu}.
 \end{align}
  It follows that
 \begin{align}
 \frac{1}{  \abs {\partial B_r}   }		\int \limits_{ \partial B_r \cap \tP^\mu_{\lambda}   }              \int \limits_{ {\tP_{\lambda,2}}} \frac{1}{ \abs {x'-y'}^{N-2}  } \dx{y}               \dH{N-1}(x) \leq C_2(N,\gamma) \> r^{-\frac{N}{2} +1 +3 \mu },
 \end{align}
which vanishes as $r \to \infty$ by the assumption that $N \geq 6$ and $\mu = \frac{7}{20}$.
 
Concerning $\tP_{\lambda,1}$,
we estimate  for sufficiently large $r$ and for all $x \in \partial B_r \cap \tP^\mu_{\lambda}$ (using \eqref{eq:estimate_of_sphere_and_paraboloid})
 \begin{align}
 	 \int \limits_{ \tP_{\lambda,1}} \frac{1}{ \abs {x_N-y_N}^{N-2}  } \dx{y}  &= \int \limits_{-\lambda}^{r - 6\gamma^2 r^{2\mu}}  \frac{1}{ \abs {x_N-y_N}^{N-2}  } \abs {B_1'} \bra {  \gamma (y_N + \lambda)^\frac{1}{2}  }^{N-1} \dx{y_N} \\ 
 	 &\leq \bra { \gamma^2 r^{2 \mu}  }^{2-N} \abs {B_1'}  \gamma^{N-1}  \int \limits_0^{2r} y_N^\frac{N-1}{2} \dx{y_N} 
 	 \leq C_{3}(N,\gamma) \> r^{ \bra {2 \mu} (2-N)  + \frac{1}{2} (N+1)   } .
 \end{align}
As this estimate is uniform in $x \in \partial B_r \cap \tP^\mu_{\lambda}$ we obtain
 \begin{align}
 &\frac{1}{  \abs {\partial B_r}   }		\int \limits_{ \partial B_r \cap \tP^\mu_{\lambda}   }  \int \limits_{ \tP_{\lambda,1}} \frac{1}{ \abs {x_N-y_N}^{N-2}  } \dx{y}  \dH{N-1}(x)  \leq C_3 \> r^{ \bra {2 \mu} (2-N)  + \frac{1}{2} (N+1)   } \frac{\abs{\partial B_r \cap \tP^\mu_{\lambda}  }}{ \abs{\partial B_r} } .
 \end{align}
 From
 \eqref{eq:integration_on_part_of_sphere} we infer that
\begin{align} \label{eq:estimate_of_small_part_of_sphere_to_whole_sphere}
 \frac{\abs{\partial B_r \cap \tP^\mu_{\lambda}  }}{ \abs{\partial B_r} } \leq 2^N\frac{\abs{B_1'} \gamma^{N-1}}{\abs{\partial B_1}} r^{\bra{-\frac{1}{2} +\mu}(N-1)}
\end{align}
so that
\begin{align}
	\frac{1}{  \abs {\partial B_r}   }		\int \limits_{ \partial B_r \cap \tP^\mu_{\lambda}   }  \int \limits_{ \tP_{\lambda,1}} \frac{1}{ \abs {x_N-y_N}^{N-2}  } \dx{y}  \dH{N-1}(x)    \leq C_4 \> r^{\mu(3-N) +1  }
\end{align}
which vanishes as $r \to \infty$ by the assumption that $N\geq 6$ and $\mu = \frac{7}{20}$.

Concerning $\tP_{\lambda,3}$,
 we similarly estimate for sufficiently large $r$ and for every $x \in \partial B_r \cap \tP^\mu_{\lambda}$ (using \eqref{eq:estimate_of_sphere_and_paraboloid})
 \begin{align}
 	 &\int \limits_{ \tP_{\lambda,3}} \frac{1}{ \abs {x_N-y_N}^{N-2}  } \dx{y}  = \int \limits_{r + 6\gamma^2 r^{2\mu } }^{+\infty} \abs {x_N-y_N}^{2-N}   \abs {B_1'} \bra {  \gamma \bra {  y_N + \lambda   }^\frac{1}{2}  }^{N-1} \dx{y_N} \\
 	 &\leq   \int \limits_{r + 6\gamma^2 r^{2 \mu}}^{2r}  \abs{ x_N-y_N  }^{2-N}  \abs{B_1'} \bra { \gamma (y_N + \lambda)^\frac{1}{2}   }^{N-1} \dx{y_N}  
 	 +  \int \limits_{2r}^{{+\infty}}  \abs{ x_N-y_N  }^{2-N}  \abs{B_1'} \bra { \gamma (y_N + \lambda)^\frac{1}{2}   }^{N-1} \dx{y_N}       \\
 	 &\leq  \bra {6\gamma^2 r^{2\mu}}^{2-N} \abs{B_1'}  2^\frac{N-1}{2} \gamma^{N-1}  \int \limits_{r + 6\gamma^2 r^{2 \mu}}^{2r}  y_N^\frac{N-1}{2} \dx{y_N}  
 	 +  
	  \int \limits_r^{+\infty} y_N^{2-N} \abs {B_1'} \bra {\gamma (3 y_N)^\frac{1}{2}}^{N-1} \dx{y_N}      \\
 	 &\leq C_5(N) \> r^{(2\mu) (2-N) + \frac{1}{2}(N+1)}.
 \end{align}
 As this estimate is uniform in $x \in \partial B_r \cap \tP^\mu_{\lambda}$ we may use \eqref{eq:estimate_of_small_part_of_sphere_to_whole_sphere} as in the estimate of $\tP_{\lambda,2}$
  to arrive at
 \begin{align}
 	\frac{1}{\abs {\partial B_r}} \int \limits_{ \partial B_r \cap \tP^\mu_{\lambda}   }  	 \int \limits_{ \tP_{\lambda,3}} \frac{1}{ \abs {x_N-y_N}^{N-2}  } \dx{y} \dH{N-1}(x)  \leq  C_6(N) \> r^{\mu(3-N) +1  }
 \end{align}
 which vanishes as $r\to \infty$ by the assumption that $N\geq 6$ and $\mu = \frac{7}{20}$.
 
 So the sup-mean-value-inequality \eqref{sup_boundary_value_inequality} tells us that for each $\epsilon >0$ there is $r_0(\epsilon) >0$ such that for every $r > r_0(\epsilon)$
 \begin{align}
  \sup \limits_{B_\frac{1}{2} } z^r \leq \epsilon \quad \text{ and } \quad \sup \limits_{ B_\frac{r}{2} } z \leq \epsilon .
 \end{align}
  We conclude that 
 \begin{align}
 	z \equiv 0 \quad \text{ in } \R^N
 \end{align}
and consequently that
 \begin{align}
 	u_{P_{\lambda}} \leq u \quad \text{ in } \R^N
 \end{align}
 and
 \begin{align}
 	\cC \subset P_{\lambda}
 \end{align}
 (see Figure \ref{fig:coincidence_set_in_paraboloid}). \\
 	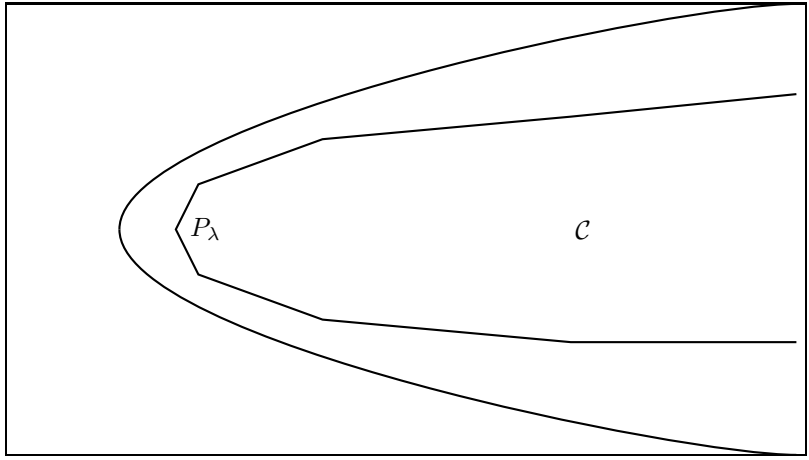
\begin{figure}[h!]
 	\centering	
 	\frame{
 		\psset{xunit=1.5cm,yunit=1.5cm}
 		\begin{pspicture*}
 		(-6,-2)(1,2)
 		\psbezier(1,-2)(0,-2)(-5,-1)(-5,0)
 		\psbezier(1,2)(0,2)(-5,1)(-5,0)
 		\psline(1,1.2)(-1,1)(-3.2,0.8)(-4.3,0.4)(-4.5,0)(-4.3,-0.4)(-3.2,-0.8)(-1,-1)(1,-1)
 		\rput[7,0.5](0,0){$\cC$}
 		\rput[16.0,0.5](0,0){$P_{\lambda}$}
 		\end{pspicture*}
 	}
 	\caption{$\cC \subset P_{\lambda} $.}
 	\label{fig:coincidence_set_in_paraboloid}
 \end{figure}
\\
 \textbf{Step 3.} \emph{Sliding Method.}\\
 We are going to slide the comparison paraboloid in the $e^N$-direction
 until the constant term in the
 expansion matches and show that for that particular $\lambda$, the two solutions coincide.

 For $\lambda < \lambda_0$ we define
 \begin{align}
 	c^\lambda := c_P   +   \lambda \ell(e^N).
 \end{align}
 Then
 \begin{align}\label{eq:expansion_of_translated_and_given_solution}
  	u_{P_\lambda}(x) &= p(x') +\ell(x) +V_{P_\lambda}(x) +c^\lambda \text{ and} \\
 	u(x) &= p(x') + \ell(x) + V_\cC(x) +c \text{ in } \R^N.
 \end{align}
 Observe that we proved in Step 2 that for each 
$\lambda \leq \lambda_0$ such that $c^\lambda < c$,

\begin{align} \label{eq:given_solution_and_lambda_solution_are_compared}
	u_{P_\lambda} \leq u \quad \text{ in } \R^N \text{ and}
\end{align}
	$\cC \subset P_\lambda$.
It follows that
 \begin{align}
 	V_\cC \leq V_{P_\lambda} \quad \text{ in } \R^N.
 \end{align}
 Inserting this into \eqref{eq:expansion_of_translated_and_given_solution} and using \eqref{eq:given_solution_and_lambda_solution_are_compared} we obtain that
 \begin{align}
 	u_{P_\lambda} \leq u = u_{P_\lambda} + V_\cC - V_{P_\lambda} +c -c^\lambda \leq u_{P_\lambda} +c -c^\lambda.
 \end{align}
 Finally, remembering that by \eqref{minustwo}, $c^{\lambda_0}<c$ and noting that
 $\lambda\mapsto c_P+\lambda \ell(e^N)$ is a strictly decreasing\news{,} continuous function and that $ \lambda\mapsto u_{P_\lambda}(x)$ is for each $x$ continuous,
 we let
  $\lambda \searrow \bar{\lambda}= (c_P-c)/(-\ell(e^N))$ and obtain that
 \begin{align}
 	u_{P_{\bar{\lambda}}} \leq u \leq 	u_{P_{\bar{\lambda}}} \quad \text{ in } \R^N.
 \end{align}
 It follows that
 \begin{align}
 	u \equiv u_{P_{\bar{\lambda}}} \quad \text{ as well as } \quad \cC = P_{\bar{\lambda}}.
 \end{align} 
 \textbf{Step 4.} \emph{Identification of the sectional ellipsoids.}\\
 From the construction of $P$ by Theorem \ref{prop:existence_of_paraboloid_solutions} we know that the sectional ellipsoids $E' \subset \R^{N-1}$ of $P$ are (up to scaling and translation) such that
 \begin{align}
 	V'_{E'}(x')= 1 - p(x') \quad \text{ for all } x' \in E'.
 \end{align}
 Setting $v'(x') := p(x') -1 + V'_{E'}(x')$ we conclude from \cite[Theorem II]{CaffarelliKarpShahgolian_Annals_2000} that $v'$ is a nonnegative, global solution of $\Delta v' = \chi_{\{v' >0\}}$ in $\R^{N-1}$. Furthermore the fact that $E' \subset \R^{N-1}$ is bounded implies that $V'_{E'}(x') \to 0$ as $|x'| \to \infty$ and therefore $\frac{v'(\rho x')}{\rho^2} \to p(x')$ in $L^\infty(\partial B_1)$ as $\rho \to \infty$. \\
 This finishes the proof of Main Theorem**.
\qed


\appendix

\section{Applications}

\subsection{Potential Theory and the obstacle problem}\label{sec:Potential_Theory_and_obstacle_problem}
\subsubsection{Ellipsoidal Potential Theory}\label{pot-theory}
In this section we shall give a short historical  remark on the potential  theoretic setting of  the obstacle problem, related to  Newton's   famous  {\it no gravity in the cavity} theorem\footnote{Newton's {\it Principia}, first book Ch. 12, Theorem XXXI.}, which  states that  spherical shells, with uniform distribution of mass, 
do not exert force in the cavity of the body. This result was generalized by P.-S. Laplace  to 
ellipsoidal  homoeoids.\footnote{I.e., a body bounded by  two similar ellipsoids having their axes in the same line. Later James Ivory (1809) gave a beautiful geometric proof of this result.}
This, in particular, means that the Newtonian potential of a homogeneous ellipsoidal  homoeoid is constant in the cavity of the homoeoid. Since the homoeoid can be represented  as $ E_t \setminus E$ where $E$ is the centered ellipsoid and $E_t= tE$ the dilated ellipsoid for  some  $t > 1$, one obtains  that 
$V_{tE} - V_E = \operatorname{const}$ in $E$. Here  $V_M$ stands for the  Newtonian potential of a homogeneous body $M$ (see  Definition \ref{NP}). 
\\
Now arguing as in \cite[proof of Theorem 5.1, page 596]{DiBenedettoFriedman}, we use that since $\Delta V_E \equiv 1$ in $E$, $V_E$ is analytic in $E$. So we may expand it in some neighborhood of the origin $B_\epsilon \subset E$ into a convergent series of of homogeneous polynomials $p_n$, where each $p_n$  is homogeneous of degree $n$, i.e.
\begin{align}
V_E(x) = \sum \limits_{n=0}^\infty p_n(x) \quad \text{ for all } x \in B_\epsilon.
\end{align}
On the other hand the scaling property of the Newton potential (cf. \eqref{eq:scaling_of_Newton_potential}) implies that  for all $x \in B_\epsilon$
\begin{align}
V_{tE}(x) = t^2 V_E\bra{\tfrac{x}{t}} = \sum \limits_{n=0}^\infty t^{2-n} p_n(x).
\end{align}
 Now the assumption that $V_{t E} -V_E \equiv 0$ in $E$ implies that $p_n\equiv 0$ for all $n \in \N \setminus \{0,2\}$\footnote{Note that there is no linear term $p_1$ since we have assumed that the ellipsoid $E$ is centered.}.
Using the analyticity of $V_E$ in $E$, it follows that $V_E$ is a quadratic polynomial inside $E$.

Since paraboloids may be considered to be  limits of a sequences of ellipsoids, and since
the Newtonian potential of a paraboloid in dimension  $N \geq 6$ is well-defined (cf. Lemma \ref{lem:existence_of_newton_potential}) the Newtonian potential of a limit set is still a quadratic polynomial inside the paraboloid.
Similarly 
cylindrical domains with ellipsoids or paraboloids as base  will have the same property, as long as their Newtonian potential is defined.\footnote{In general one may consider the generalized Newtonian potential  of any domain in all dimensions, see \cite{GNP1}.}


\subsubsection{From potential theory to obstacle problem}\label{OP-formulation}

Let us now  rephrase   the discussion of potentials in the  previous section 
with no reference to integrability of Newtonian kernels. Suppose the Newtonian potential $V_D (x)$ of a domain $D$ (cf. Definition \ref{NP}) is finite and 
suppose furthermore that for some quadratic polynomial $q(x)$, $V_D   (x) = q(x)$ 
inside the domain $D$. 
In particular, this means that the function $u (x): = q(x) -V_D (x)$ is  a solution of the no-sign obstacle problem  
\begin{equation}\label{eq:obstacle}
\Delta u =  \chi_{\R^N \setminus D} , \quad u = 0 \quad \hbox{ in } D\quad \hbox{and} 
\quad  | u (x)  | \leq C(1 + |x|^2) \quad \hbox{ for $x$ in } \R^N,
\end{equation} 
for some $C < +\infty$.\footnote{Here the quadratic growth of $u$ follows from a Harnack-inequality
	argument for $V_D$ (cf. \cite[Theorem 8.17 and Theorem 8.18]{gilbarg2001elliptic}).} 
By  \cite[Theorem II]{CaffarelliKarpShahgolian_Annals_2000},  $\R^N \setminus D= \{ u > 0\}$, so it is more convenient  to replace equation \eqref{eq:obstacle} by
\begin{equation}\label{eq:obstacle2}
\Delta u =  \chi_{\{u >0 \} }\quad  , \quad
u \geq 0\hbox{ in } \R^N .
\end{equation}

This new formulation makes it  possible to consider limit domains of coincidence sets
of such  solutions of the obstacle problem. 
In particular, taking limit domains of sequences of  ellipsoids, we obtain that: 
\begin{center}
	{\it   Half-spaces,   paraboloids, and   cylinders  with  these bases do occur as  coincidence sets in \eqref{eq:obstacle2}.} 
\end{center}

\subsection{Eshelby's Conjecture and the obstacle problem} \label{sec:Eshelby}

In the following brief outline of Eshelby's conjecture we follow \cite{KANG_Eshelby_review}. For more details on the problem we refer the interested reader to this survey.\\
Let $N \in \{2,3\}$ and let $\Omega \in \R^N$ be a \emph{bounded} Lipschitz domain to be inserted into a homogeneous medium of conductivity $1$ that had a uniform electric field $E=-a$ before the insertion.\\
Let us assume that the constant conductivity of $\Omega$ is $k \neq 1$. Then the insertion of the inclusion $\Omega$ perturbs the uniform electric field and the perturbed electric field is given by $E=-\nabla u$, where the potential $u$ solves the electric polarization problem
\begin{align}
	\nabla \cdot \bra{ (1+(k-1) \chi_\Omega) \nabla u   } = 0  \qquad &\text{ in } \R^N,\\
	u(x) - a\cdot x = O(|x|^{1-N}) \qquad&\text{ as } |x| \to \infty.
\end{align}
\emph{Eshelby's conjecture} says that if the electric field $E$ is uniform inside the insertion $\Omega$ for \emph{any} uniform field $a$, then the inclusion $\Omega$ is of an elliptic or ellipsoidal shape.\\
In \cite{Kang-Milton-2008} H. Kang and G. W. Milton proved that Eshelby's conjecture is equivalent to proving that if a simply connected domain $\Omega \in \R^N$ is such that its Newton potential $V_\Omega$ (cf.  Definition \ref{NP}) is quadratic inside $\Omega$ then $\Omega$ is an ellipsoid. And this, as outlined in section \ref{OP-formulation}, is equivalent to characterizing global solutions with bounded coincidence set in the obstacle problem.

\subsection{Paraboloid solutions as traveling waves in the Hele-Shaw problem}\label{sec:Traveling_waves_Hele-Shaw}
We briefly illustrate the tight relationship between global solutions of the obstacle problem and traveling waves in the Hele-Shaw problem. Let to this end $u$ be a non-negative solution of
\begin{align}
\Delta u = \chi_{\set {u>0}} \quad \text{ in } \R^N
\end{align}
such that $\set {u=0} = P$, where $P$ is a paraboloid opening in the $e^N$--direction (cf. Theorem \ref{prop:existence_of_paraboloid_solutions}).  By  \cite[Theorem 5.1]{PetrosyanShahgholianUraltseva_book}). 
we know that 
\begin{align} \label{eq:derivatives_of_paraboloid_global_solution}
\partial_{NN} u \geq 0 \quad \text{ and } \quad \partial_N u \leq 0 \quad \text{ in } \R^N .
\end{align}
We fix  a speed
$c>0$ in the direction $e^N$ and consider
\begin{align}
p(t,x) := -\partial_N u(x-ct e^N) c.
\end{align}
Note that $p$ is non-negative.
A direct calculation yields that
\begin{align} \label{eq:first_equation_for_p}
\Delta p(t,x) = \partial_t \chi_{\set{u(x-ct e^N) >0}} \quad \text{ in } \R^N
\end{align}
in the sense of distributions. From \eqref{eq:derivatives_of_paraboloid_global_solution} we infer that $\chi_{\set {u(x-ct e^N)>0}} = \chi_{\set {p(t,x)>0}}$ and combining this fact with \eqref{eq:first_equation_for_p} we obtain that
\begin{align}
\Delta p = \partial_t \chi_{\set {p>0}} \quad \text{ in } \R \times \R^N,
\end{align} 
i.e. that $p$ is a traveling wave solution of the Hele-Shaw problem in the sense of distributions.

\section{Blow-downs}

\begin{lem}[Uniqueness of blow-downs] \label{lem:uniqueness_of_blow-downs}
 \label{lem:blow_down_independent_of_sequence_of_rescalings} \mbox{}\\
	Let $u$ be a nonnegative global solution of the obstacle problem, i.e $u \geq 0$ solves (in the sense of distributions)
	\begin{align}
	\Delta u = \chi_{ \set {u >0 }}  \quad \text{ in } \R^N
	\end{align}
	and let us define the sequences of rescalings $(r_k)_{k\in \N}$ such that $r_k \to \infty$ as $k \to \infty$, and $(\rho_n)_{n \in \N}$ such that $\rho_n \to \infty$ as $n \to \infty$ satisfying that
	\begin{align}
		\frac{u(r_k x)}{r_k^2} &\to u_0(x) \text{ in } C_{\loc}^{1,\alpha}(\R^N)  \text{ as } k \to \infty \quad \text{ and } \quad \\
	\frac{u(\rho_n x)}{\rho_n^2} &\to \tilde{u}_0(x) \text{ in } C_{\loc}^{1,\alpha}(\R^N)  \text{ as } n \to \infty.
	\end{align}
	Then $u_0 = \tilde{u}_0$.
\end{lem}

\begin{proof}
From  \cite[Proposition 3.17 (iii)]{PetrosyanShahgholianUraltseva_book} and \cite[Proposition 5.3]{PetrosyanShahgholianUraltseva_book} we know that both $u_0$ and $\tilde{u}_0$ are either half-space solutions or polynomial solutions. \\
In the case that $u_0$ is a half-space solution we infer from \cite[proof of Theorem II, Case 2]{CaffarelliKarpShahgolian_Annals_2000} that $u$ must be a half-space solution.  \\
In the case that $u_0$ is a polynomial solution, $\tilde{u}_0$ also must be a polynomial solution (by the argument above, interchanging $\tilde{u}_0$ and $u_0$.)
Let therefore now $A, B \in \R^{N \times N}$ be two symmetric, positive semidefinite matrices such that $\operatorname{tr}(A) = \operatorname{tr}(B)= \tfrac{1}{2}$ such that
\begin{align}
	u_0(x) = x^T A x \quad \text{ and} \quad \tilde{u}_0(x) = x^T B x\quad \text{ for all  } x \in \R^N.
\end{align}
Note that as a consequence of \cite[Proposition 3.17 (v)]{PetrosyanShahgholianUraltseva_book}, for any $p \in (1,\infty)$ it holds that 
	\begin{align}
\frac{u(r_k x)}{r_k^2} &\to u_0(x) \text{ in } W_{\loc}^{2,p}(\R^N)  \text{ as } k \to \infty \quad \text{ and } \quad \\
\frac{u(\rho_n x)}{\rho_n^2} &\to \tilde{u}_0(x) \text{ in } W_{\loc}^{2,p}(\R^N)  \text{ as } n \to \infty.
\end{align}
	Define now  $\phi(h, r, x)$ to be the ACF-functional 
\begin{align}
\phi(h, r, x) := \frac{1}{r^4} \int \limits_{B_r(x)} \frac{\abs {\nabla h^+}^2}{\abs {y}^{N-2} }  \dx{y} \int \limits_{B_r(x)} \frac{\abs {\nabla h^-}^2}{\abs {y}^{N-2} }  \dx{y},
\end{align}
which is non-decreasing in $r$, see \cite{ACF84}. Then for a subsequence $(\rho_{n_k})_{k \in \N}$ such that $r_k \leq \rho_{n_k}$ for all $k \in \N$ and for any $e \in \partial B_1$ the monotonicity of the ACF-functional together with the strong $W^{2,p}$-convergence implies that
\begin{align}
	\phi(\partial_e u_0,0,1)   \leftarrow \phi\big( \partial_e \tfrac{u(r_{k} \cdot)}{r_{k}^2}, 1, 0 \big) = \phi(\partial_e u, r_k,0) \leq \phi(\partial_e u, \rho_{n_k}, 0 ) = \phi\big( \partial_e \tfrac{u(\rho_{n_k} \cdot)}{\rho_{n_k}^2}, 1, 0 \big) \to \phi(\partial_e \tilde{u}_0, 1,0)
\end{align}
as $k \to \infty$, i.e. for all $e \in \partial B_1$ it holds that $	\phi(\partial_e u_0,0,1) \leq  \phi(\partial_e \tilde{u}_0, 1,0)$. From this we conclude that for all $e \in \partial B_1$
\begin{align}
	|Ae|^2 \leq |Be|^2.
\end{align}
Using \cite[Lemma 14]{Caffarelli-revisited} we obtain that $A=B$. Hence $u_0 \equiv \tilde{u}_0$.
\end{proof}

\begin{lem}[Preservation of blow-down] \label{lem:preservation_of_blow_down}
	\mbox{}\\
	Let $u$ be a nonnegative global solution of the obstacle problem, i.e $u \geq 0$ solves (in the sense of distributions)
	\begin{align}
	\Delta u = \chi_{ \set {u >0 }}  \quad \text{ in } \R^N
	\end{align}
	and let us define the sequence of rescalings for all $k \in \N$
	\begin{align}
	u_k(x) := \frac{u(x^k +r_k x)}{r^2_k} \quad \text{ for all } x \in \R^N,
	\end{align}
	where $(x^k)_{k \in \N} \subset \partial \{u>0\}$ and $(r_k)_{k \in  \N} \subset (0,\infty)$ such that $r_k \to \infty$ as $k \to \infty$. It is well known that (up to taking a subsequence)
	\begin{align}
	u_k \to u_0 \quad \text{ in } C_{\loc}^{1,\alpha}(\R^N) \text{ as } k \to \infty
	\end{align}
	for all $\alpha \in (0,1)$, where $u_0$ is again a nonnegative global solution of the obstacle problem (cf. \cite[Proposition 3.17]{PetrosyanShahgholianUraltseva_book}).\\	
	Let furthermore $p,q$ be two homogeneous polynomials of degree $2$ such that $p$ is the blow-down of $u$ and $q$ is the blow-down of $u_0$, i.e.
	\begin{align}
	\frac{u(\rho x)}{\rho^2} &\to p(x) \text{ in } C_{\loc}^{1,\alpha}(\R^N)  \text{ as } \rho \to \infty \quad \text{ and } \quad \\
	\frac{u_0(\rho x)}{\rho^2} &\to q(x) \text{ in } C_{\loc}^{1,\alpha}(\R^N)  \text{ as } \rho \to \infty.
	\end{align}
	Then $p=q$.
\end{lem}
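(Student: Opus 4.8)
The plan is to show that the blow-down of $u_0$ equals $p$; since by hypothesis this blow-down is $q$ and blow-down limits are unique, this gives $p=q$. The key soft fact is that the blow-down of $u$ does not see a \emph{bounded} translation of the base point. By Proposition \ref{prop:known_results} we have $\|D^2u\|_{L^\infty(\R^N)}\le C$, hence $|\nabla u(y)|\le C(1+|y|)$, so that $u(z+sx)-u(sx)=\int_0^1\nabla u(sx+tz)\cdot z\,dt$ is of size $O\bigl(|z|(1+s+|z|)\bigr)$; together with $\tfrac{u(sx)}{s^2}\to p(x)$ this yields, for some $\omega(s)\to0$,
\begin{align}
	\sup_{|x|\le1}\Bigl|\tfrac{u(z+sx)}{s^2}-p(x)\Bigr|\ \le\ \omega(s)+C\,\tfrac{|z|}{s}\Bigl(1+\tfrac{|z|}{s}\Bigr)\qquad\text{for all }z\in\R^N,\ s\ge1 .
\end{align}
Applying this with $z=x^k$ and $s=r_k\rho$, and letting first $k\to\infty$ (the left-hand side then tends to $\sup_{|x|\le1}|\tfrac{u_0(\rho x)}{\rho^2}-p(x)|$) and afterwards $\rho\to\infty$, all error terms vanish \emph{provided the ratios $|x^k|/r_k$ stay bounded}; hence in that case $\tfrac{u_0(\rho x)}{\rho^2}\to p(x)$ and the lemma follows. (The same computation, with fixed centre $z=x^k$ and $s\to\infty$, also shows that each $u_k$ has blow-down $p$.)

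The only delicate case is $|x^k|/r_k\to\infty$. Such an escape can only be tangent to the null space $\cN(p)$ of the positive semidefinite blow-down $p$ — in our application, for instance, $\cN(p)=\R e^N$ and $x^k=(0,x_N^k)\in\cN(p)$ with $r_k=\sqrt{x_N^k}$. The point is that $\cN(p)$ is exactly the radical of the bilinear form attached to $p$, so that $p(x^k+sx)=s^2p(x)$ \emph{exactly}; consequently, writing $\phi:=u-p$ (which solves $\Delta\phi=-\chi_\cC$ and has subquadratic growth), one has
\begin{align}
	\tfrac{u_0(\rho x)}{\rho^2}-p(x)\ =\ \lim_{k\to\infty}\tfrac{\phi(x^k+r_k\rho x)}{(r_k\rho)^2}\ =\ \tfrac{\phi_0(\rho x)}{\rho^2},\qquad \phi_0:=u_0-p ,
\end{align}
and it remains to prove that $\phi_0$ is subquadratic, i.e. $\tfrac{\phi_0(\rho x)}{\rho^2}\to0$ as $\rho\to\infty$.

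This last step is the main obstacle. The crude bound $|\phi(y)|=o(|y|^2)$ is useless here, because the base point $x^k$ can lie much farther out than the working scale $r_k$, so the soft translation estimate above breaks down. Instead one must exploit the obstacle-problem structure: up to a linear function and a constant, $\phi=-V_\cC$, and $V_\cC$ is large only in a thin neighbourhood of the convex coincidence set while decaying to zero away from it — this is precisely the content of the Newtonian-potential expansion of Section \ref{section:Newton_potential_expansion_of_u} and the decay estimate of Lemma \ref{lem:Newton_potential_vanishes}. Combining this localisation of $\phi$ with a diagonal choice of an intermediate scale $\rho_k\to\infty$ adapted to the $C^{1,\alpha}_{\loc}$-convergence $u_k\to u_0$ shows that $\phi_0$ inherits subquadratic growth, whence $q=p$. (Alternatively, a monotonicity formula of Monneau type for $r\mapsto r^{-(N+3)}\int_{\partial B_r}(u-p)^2\dH{N-1}$, which tends to $0$ as $r\to\infty$ and passes to the limit under the blow-up $u\mapsto u_0$, leads to the same conclusion.)
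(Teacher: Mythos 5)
The paper proves the lemma with the Alt--Caffarelli--Friedman monotonicity formula applied to the directional derivatives $\partial_e u$: by monotonicity of $\phi(\partial_e u, r, x^k)$ in $r$, the inclusion $B_\kappa(x^k)\subset B_{\kappa+|x^k|}(0)$, and continuity of the ACF functional under $C^{1,\alpha}_{\loc}$-convergence, one obtains $\phi(\partial_e q,1,0)\le\phi(\partial_e p,1,0)$ for every unit direction $e$, i.e.\ $|Qe|\le|Ae|$, whence $A=Q$ by \cite[Lemma 14]{Caffarelli-revisited}. This argument treats arbitrary sequences $(x^k,r_k)$ without any case distinction on $|x^k|/r_k$. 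Your proposal takes a genuinely different route, but it only succeeds in the easy case and has an unrepaired gap in the case that is actually relevant.

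Your Case 1 ($|x^k|/r_k$ bounded) is correct and elementary: the Taylor estimate using the global bound on $D^2u$ produces error terms $O(M/\rho)$ after sending $k\to\infty$, so it yields $q=p$. This is a genuinely softer argument than the paper's and is worth noting.

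Your Case 2 ($|x^k|/r_k\to\infty$) is where the proof breaks down, and unfortunately it is exactly the case that arises in the application in Step~4 of the proof of Theorem~\ref{prop:existence_of_paraboloid_solutions} ($x^k=(0,x^k_N)$, $r_k=\sqrt{x^k_N}$). Two independent problems occur. First, you assert that the escape ``can only be tangent to $\cN(p)$'' and then silently use the exact identity $p(x^k+sx)=s^2p(x)$, which holds only when $x^k$ is \emph{exactly} in the radical of $p$. For a general sequence satisfying the hypotheses of the lemma, the boundedness of $u_k(0)=u(x^k)/r_k^2$ together with $u\sim p$ at infinity only forces $p(x^k/|x^k|)\to 0$, i.e.\ $x^k/|x^k|$ approaches $\cN(p)$; it does not lie in $\cN(p)$, so the cross term $2s\,(x^k)^TAx$ and the constant $p(x^k)$ in $p(x^k+sx)$ do not disappear, and their ratio to $(r_k\rho)^2$ need not vanish since $|x^k|/r_k$ is unbounded. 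Second, and more seriously, even if one grants $x^k\in\cN(p)$ exactly, the key reduction — that $\phi_0:=u_0-p$ is subquadratic — is only gestured at. The sketch via the Newtonian-potential expansion relies on Sections~\ref{section:Newton_potential_expansion_of_u} and Lemma~\ref{lem:Newton_potential_vanishes}, which require $N\ge 6$ and, more importantly, would require tracking the potential $V_{\cC}$ under the rescaling with off-center base points $x^k$; this is not carried out. The alternative Monneau-type monotonicity you invoke is usually available at singular free-boundary points for the centered rescaling; you neither state a version adapted to the moving center $x^k$ nor verify that the monotone quantity tends to zero and passes to the limit under $u\mapsto u_0$. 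As written, Case 2 is a plan, not a proof. By contrast, the paper's ACF argument absorbs the off-center rescaling by the elementary bound $\phi(\partial_e u,\kappa,x^k)\le\bigl(\tfrac{\kappa+|x^k|}{\kappa}\bigr)^4\phi(\partial_e u,\kappa+|x^k|,0)$ and needs no case distinction, no subquadraticity of $u_0-p$, and no $N\ge 6$ assumption.
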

\begin{proof}
	Define now  $\phi(h, r, x)$ to be the ACF-functional 
	\begin{align}
	\phi(h, r, x) := \frac{1}{r^4} \int \limits_{B_r(x)} \frac{\abs {\nabla h^+}^2}{\abs {y}^{N-2} }  \dx{y} \int \limits_{B_r(x)} \frac{\abs {\nabla h^-}^2}{\abs {y}^{N-2} }  \dx{y},
	\end{align}
	which is non-decreasing in $r$, see \cite{ACF84}. Note that as a consequence of \cite[Proposition 3.17 (v)]{PetrosyanShahgholianUraltseva_book} for any $p \in (1,\infty)$
	\begin{align}
		\frac{u(\rho x)}{\rho^2} &\to p(x) &&\text{ in } W_{\loc}^{2,p}(\R^N)  \text{ as } \rho \to \infty \quad \\
	\frac{u_0(\rho x)}{\rho^2} &\to q(x) &&\text{ in } W_{\loc}^{2,p}(\R^N)  \text{ as } \rho \to \infty \quad \text{and} \\\quad u_k &\to u_0~ &&\text{ in } W^{2,p}_{\loc}(\R^N) \text{ as } k \to \infty.
	\end{align}
	We may therefore estimate for each $e \in \partial B_1$, $\rho >0$ and $\epsilon >0$ and sufficiently large $k \in \N$
	\begin{align}
	\phi(\partial_e u_0, \rho, 0) &\leq \epsilon + \phi(\partial_e u_k, \rho, 0) = \epsilon + \phi(\partial_e u, \rho r_k, x^k) \leq \epsilon + \lim \limits_{\kappa \to \infty}   \phi(\partial_e  u, \kappa, x^k) \\
	&\leq  \epsilon + \lim \limits_{\kappa \to \infty}  \bra {  \frac{\kappa +\abs{x^k}  }{\kappa}   }^4    \phi(\partial_e  u, \kappa + \abs{x^k},0)  = \epsilon + \phi(\partial_e p , 1, 0)
	\end{align}
	On the other hand using the continuity of the ACF-functional and passing to the limit $\rho \to \infty$ we get
	\begin{align}
	\phi(\partial_e q, 1,0 ) = \lim \limits_{\rho \to \infty} 	\phi(\partial_e u_0, \rho, 0) \leq \epsilon + \phi(\partial_e p , 1, 0).
	\end{align}
	Since $\epsilon >0$ is arbitrary we obtain that for all $e \in \partial B_1$
	\begin{align} \label{eq:ACF_comparison_between_blowup_and_blowdown2}
	\phi \bra {\partial_e q,1,0 } \leq	\phi \bra {\partial_e p , 1,0   } .
	\end{align} 
	Let us now express $p$ and $q$ as
	\begin{align}
	p(x) = x^T A x \quad \text{ and } \quad q(x) = x^T Q x,
	\end{align}
	where $A$ and $Q \in \R^{N \times N}$ are symmetric positive semidefinite such that $\operatorname{tr}(A)= \operatorname{tr}(Q) = \frac{1}{2}$.
	From \eqref{eq:ACF_comparison_between_blowup_and_blowdown2} we conclude that for every $e \in \partial B_1$ 
	\begin{align}
	\abs{Q e}^2 \leq \abs{A e}^2.
	\end{align}
	Using \cite[Lemma 14]{Caffarelli-revisited} we obtain that $A=Q$. Hence $q \equiv p$.
\end{proof}

\bibliographystyle{abbrv}
\bibliography{references-4}

\begin{thebibliography}{10}

\bibitem{ACF84}
H.~W. Alt, L.~A. Caffarelli, and A.~Friedman.
\newblock Variational problems with two phases and their free boundaries.
\newblock {\em Trans. Amer. Math. Soc.}, 282(2):431--461, 1984.

\bibitem{Caffarelli_regularity_free_boundary_higher_dimensions_Acta77}
L.~A. Caffarelli.
\newblock The regularity of free boundaries in higher dimensions.
\newblock {\em Acta Math.}, 139(3-4):155--184, 1977.

\bibitem{Caffarelli-revisited}
L.~A. Caffarelli.
\newblock The obstacle problem revisited.
\newblock {\em J. Fourier Anal. Appl.}, 4(4-5):383--402, 1998.

\bibitem{CaffarelliKarpShahgolian_Annals_2000}
L.~A. Caffarelli, L.~Karp, and H.~Shahgholian.
\newblock Regularity of a free boundary with application to the {P}ompeiu
  problem.
\newblock {\em Ann. of Math. (2)}, 151(1):269--292, 2000.

\bibitem{ColomboSpolaorVelnichkov18}
M.~Colombo, L.~Spolaor, and B.~Velichkov.
\newblock A logarithmic epiperimetric inequality for the obstacle problem.
\newblock {\em Geom. Funct. Anal.}, 28(4):1029--1061, 2018.

\bibitem{DiBenedettoFriedman}
E.~DiBenedetto and A.~Friedman.
\newblock Bubble growth in porous media.
\newblock {\em Indiana Univ. Math. J.}, 35(3):573--606, 1986.

\bibitem{Dive}
P.~Dive.
\newblock Attraction des ellipso\"{\i}des homog\`enes et r\'{e}ciproques d'un
  th\'{e}or\`eme de {N}ewton.
\newblock {\em Bull. Soc. Math. France}, 59:128--140, 1931.

\bibitem{ellipsoid}
S.~Eberle and G.~S. Weiss.
\newblock Characterizing compact coincidence sets in the obstacle problem---a
  short proof.
\newblock {\em Algebra i Analiz}, 32(4):137--145, 2020.

\bibitem{Figalli-Ros-Oton-Serra-Generic}
A.~Figalli, X.~Ros-Oton, and J.~Serra.
\newblock Generic regularity of free boundaries for the obstacle problem.
\newblock {\em Publ. Math. Inst. Hautes \'{E}tudes Sci.}, 132:181--292, 2020.

\bibitem{Figalli-Serra-2020-Inventiones}
A.~Figalli and J.~Serra.
\newblock On the fine structure of the free boundary for the classical obstacle
  problem.
\newblock {\em Invent. Math.}, 215(1):311--366, 2019.

\bibitem{FriedmanSakai}
A.~Friedman and M.~Sakai.
\newblock A characterization of null quadrature domains in {${\bf R}^N$}.
\newblock {\em Indiana Univ. Math. J.}, 35(3):607--610, 1986.

\bibitem{gilbarg2001elliptic}
D.~Gilbarg and N.~Trudinger.
\newblock {\em Elliptic Partial Differential Equations of Second Order}.
\newblock Classics in Mathematics. Springer Berlin Heidelberg, 2001.

\bibitem{KANG_Eshelby_review}
H.~Kang.
\newblock Conjectures of pólya-szegö and eshelby, and the newtonian potential
  problem: A review.
\newblock {\em Mechanics of Materials}, 41(4):405--410, 2009.
\newblock The Special Issue in Honor of Graeme W. Milton.

\bibitem{Kang-Milton-2008}
H.~Kang and G.~W. Milton.
\newblock Solutions to the {P}\'{o}lya-{S}zeg\"{o} conjecture and the weak
  {E}shelby conjecture.
\newblock {\em Arch. Ration. Mech. Anal.}, 188(1):93--116, 2008.

\bibitem{GNP1}
L.~Karp.
\newblock Generalized {N}ewton potential and its applications.
\newblock {\em J. Math. Anal. Appl.}, 174(2):480--497, 1993.

\bibitem{KarpMargulis_bounded_sources}
L.~Karp and A.~S. Margulis.
\newblock Newtonian potential theory for unbounded sources and applications to
  free boundary problems.
\newblock {\em J. Anal. Math.}, 70:1--63, 1996.

\bibitem{KarpMargulis_free_boundaries}
L.~Karp and A.~S. Margulis.
\newblock Null quadrature domains and a free boundary problem for the
  {L}aplacian.
\newblock {\em Indiana Univ. Math. J.}, 61(2):859--882, 2012.

\bibitem{Lewy79}
H.~Lewy.
\newblock An inversion of the obstacle problem and its explicit solution.
\newblock {\em Ann. Scuola Norm. Sup. Pisa Cl. Sci. (4)}, 6(4):561--571, 1979.

\bibitem{Liu-2008}
L.~P. Liu.
\newblock Solutions to the {E}shelby conjectures.
\newblock {\em Proc. R. Soc. Lond. Ser. A Math. Phys. Eng. Sci.},
  464(2091):573--594, 2008.

\bibitem{Monneau2003}
R.~Monneau.
\newblock On the number of singularities for the obstacle problem in two
  dimensions.
\newblock {\em The Journal of Geometric Analysis}, 13(2):359, Jun 2003.

\bibitem{PetrosyanShahgholianUraltseva_book}
A.~Petrosyan, H.~Shahgholian, and N.~Uraltseva.
\newblock {\em Regularity of free boundaries in obstacle-type problems}, volume
  136 of {\em Graduate Studies in Mathematics}.
\newblock American Mathematical Society, Providence, RI, 2012.

\bibitem{Sakai_Null_quadrature_domains}
M.~Sakai.
\newblock Null quadrature domains.
\newblock {\em J. Analyse Math.}, 40:144--154 (1982), 1981.

\bibitem{SavinYu2020regularity}
O.~Savin and H.~Yu.
\newblock Regularity of the singular set in the fully nonlinear obstacle
  problem.
\newblock {\em To appear in Journal of the European Mathematical Society},
  arXiv:1905.02308, 2020.

\bibitem{Shahgholian92_conjecture}
H.~Shahgholian.
\newblock On quadrature domains and the {S}chwarz potential.
\newblock {\em J. Math. Anal. Appl.}, 171(1):61--78, 1992.

\bibitem{Weiss_homogeneity_improvement}
G.~S. Weiss.
\newblock A homogeneity improvement approach to the obstacle problem.
\newblock {\em Invent. Math.}, 138(1):23--50, 1999.

\end{thebibliography}
\end{document}